\theoremstyle{thmstyletwo}%
\newtheorem{theorem}{Theorem}%  meant for continuous numbers
\newtheorem{proposition}[theorem]{Proposition}%
\newtheorem{example}{Example}%
\newtheorem{remark}{Remark}%
\newtheorem{definition}{Definition}
\newtheorem{lemma}[theorem]{Lemma}
\newtheorem{corollary}[theorem]{Corollary}
\numberwithin{equation}{section}
\numberwithin{equation}{section} 
\definecolor{mygreen}{rgb}{0.1,0.75,0.2}
\providecommand{\mathbbm}{\mathbb} % In case we don't load bbm
\newcommand{\R}{\mathbbm{R}}
\newcommand{\F}{\mathcal{F}}
\renewcommand{\phi}{\varphi}
\newcommand{\iid}{\stackrel{\text{i.i.d.}}{\sim}}
\newcommand{\E}{\mathbb{E}}
\renewcommand{\S}{\mathbb{S}}
\renewcommand{\P}{\mathbb{P}}
\newcommand{\mcF}{\mathcal{F}}
\newcommand{\mcH}{\mathcal{H}}
\newcommand{\inparen}[1]{\left(#1\right)}             %\inparen{x+y}  is (x+y)
\newcommand{\insquare}[1]{\left[#1\right]}             %\insquare{x+y}  is [x+y]
\definecolor{mygreen}{rgb}{0.1,0.75,0.2}
\begin{document}

\DOI{DOI HERE}
\copyrightyear{2021}
\vol{00}
\pubyear{2021}
\access{Advance Access Publication Date: Day Month Year}
\appnotes{Paper}
\copyrightstatement{Published by Oxford University Press on behalf of the Institute of Mathematics and its Applications. All rights reserved.}
\firstpage{1}

%\subtitle{Subject Section}

% \title[Short Article Title]{Article Title}

\title[Sharp concentration of simple random tensors II: asymmetry]{Sharp concentration of simple random tensors II: asymmetry}

% \author{First Author*
% \address{\orgdiv{Department}, \orgname{Organization}, \orgaddress{\street{Street}, \postcode{Postcode}, \state{State}, \country{Country}}}}
% \author{Second Author and Third Author\ORCID{0000-0000-0000-0000}
% \address{\orgdiv{Department}, \orgname{Organization}, \orgaddress{\street{Street}, \postcode{Postcode}, \state{State}, \country{Country}}}}
% \author{Fourth Author
% \address{\orgdiv{Department}, \orgname{Organization}, \orgaddress{\street{Street}, \postcode{Postcode}, \state{State}, \country{Country}}}}
% \author{Fifth Author
% \address{\orgdiv{Department}, \orgname{Organization}, \orgaddress{\street{Street}, \postcode{Postcode}, \state{State}, \country{Country}}}}

% \author{Omar Al-Ghattas
% \address{\orgdiv{Department of Statistics}, \orgname{University of Chicago}, \orgaddress{ \postcode{IL 60637}, \country{USA}}}}
\author{Jiaheng Chen*
\address{\orgdiv{Committee on Computational and Applied Mathematics}, \orgname{University of Chicago}, \orgaddress{ \postcode{IL 60637}, \country{USA}}}}
\author{Daniel Sanz-Alonso
\address{\orgdiv{Department of Statistics}, \orgname{University of Chicago}, \orgaddress{ \postcode{IL 60637}, \country{USA}}}}

% \author{Omar Al-Ghattas
% \address{\orgdiv{Department of Statistics}, \orgname{University of Chicago}, \orgaddress{\street{5747 South Ellis Avenue}, \postcode{60637}, \state{IL}, \country{USA}}}}
% \author{Jiaheng Chen*
% \address{\orgdiv{Committee on Computational and Applied Mathematics}, \orgname{University of Chicago}, \orgaddress{\street{5747 S Ellis Avenue}, \postcode{60637}, \state{IL}, \country{USA}}}}
% \author{Daniel Sanz-Alonso
% \address{\orgdiv{Department of Statistics}, \orgname{University of Chicago}, \orgaddress{\street{5747 South Ellis Avenue}, \postcode{60637}, \state{IL}, \country{USA}}}}

\authormark{J. Chen and D. Sanz-Alonso}

\corresp[*]{Corresponding author: \href{email:jiaheng@uchicago.edu}{jiaheng@uchicago.edu}}

\received{Date}{0}{Year}
\revised{Date}{0}{Year}
\accepted{Date}{0}{Year}

%\editor{Associate Editor: Name}

\abstract{
This paper establishes sharp concentration inequalities for simple random tensors. Our theory unveils a phenomenon that arises only for asymmetric tensors of order $p \ge 3:$ when the effective ranks of the covariances of the component random variables lie on both sides of a critical threshold, an additional logarithmic factor emerges that is not present in sharp bounds for symmetric tensors. 
 To establish our results, we develop empirical process theory for products of $p$ different function classes evaluated at $p$ different random variables, extending generic chaining techniques for quadratic and product empirical processes to higher-order settings. 
}

\keywords{concentration inequalities; random tensors; empirical processes; generic chaining.}

% \boxedtext{
% \begin{itemize}
% \item Key boxed text here.
% \item Key boxed text here.
% \item Key boxed text here.
% \end{itemize}}

\maketitle

\section{Introduction}\label{sec:introduction}

This paper establishes sharp bounds for the operator-norm deviation of the sum of simple (rank-one) random tensors from its expectation. For integer $p\ge 2$ and each $1\le k\le p$, let $X^{(k)},X^{(k)}_1,\ldots,X^{(k)}_N$ be i.i.d. centered sub-Gaussian random variables in a separable Hilbert space $H^{(k)}$ with covariance operator $\Sigma^{(k)}$. Our first main result, Theorem \ref{thm:main1}, shows that, with a standard notation described below,
\[
\E \bigg\|\frac{1}{N} \sum_{i=1}^N X_i^{(1)} \otimes \cdots \otimes X_i^{(p)}-\E\, X^{(1)} \otimes \cdots \otimes X^{(p)}\bigg\| \lesssim_p \bigg(\prod_{k=1}^p\|\Sigma^{(k)}\|^{1 / 2}\bigg)\mathscr{E}_N\big((\Sigma^{(k)})_{k=1}^p\big),
\]
where
\[
\mathscr{E}_N\big((\Sigma^{(k)})_{k=1}^p\big):= \bigg(\frac{\sum_{k=1}^p r(\Sigma^{(k)})}{N}\bigg)^{1/2}+\frac{1}{N}\prod_{k=1}^{p}\Big(r(\Sigma^{(k)})+\log N\Big)^{1/2},\quad \ r(\Sigma^{(k)}):=\frac{\mathrm{Tr}(\Sigma^{(k)})}{\|\Sigma^{(k)}\|}.
\]
We also obtain corresponding $q$-th moment bounds for $q\ge 1$. In addition, Theorem \ref{thm:main1} establishes a matching lower bound in the case of independent Gaussian data, demonstrating the sharpness of the result and showing that the logarithmic factors are, in general, unavoidable. When $p=2$, our result yields a sharp dimension-free concentration bound for the sample cross-covariance operator of sub-Gaussian random variables $X^{(1)} $ and $X^{(2)}$, without imposing any assumptions on their correlation structure:
\begin{align*}
&\E \bigg\|\frac{1}{N}\sum_{i=1}^{N}X^{(1)}_i\otimes X^{(2)}_i-\E\, X^{(1)}\otimes X^{(2)}\bigg\| \\
& \lesssim \big(\|\Sigma^{(1)}\|\|\Sigma^{(2)}\|\big)^{1/2}\left(\bigg(\frac{r(\Sigma^{(1)})+r(\Sigma^{(2)})}{N}\bigg)^{1/2}+\frac{(r(\Sigma^{(1)})r(\Sigma^{(2)}))^{1/2}}{N}\right).
\end{align*} To prove Theorem \ref{thm:main1}, we introduce and develop the theory of multi-product empirical processes. In our second main result, Theorem \ref{thm:main2}, we leverage generic chaining techniques to obtain a sharp in-expectation bound on
 \begin{equation*}
 \sup_{f^{(k)} \in \mathcal{F}^{(k)},1\le k\le p}\bigg|\frac{1}{N} \sum_{i=1}^N \prod_{k=1}^p f^{(k)} (X^{(k)}_i)-\mathbb{E} \prod_{k=1}^p f^{(k)} (X^{(k)})\bigg|
\end{equation*}
in terms of quantities that reflect the geometric complexity of the function classes $\F^{(k)}, 1\le k\le p,$ which are taken to contain bounded linear functionals on the corresponding Hilbert spaces $H^{(k)}$ in the proof of Theorem \ref{thm:main1}.

\subsection{ Main 
 contributions and outline}
This paper further develops the study of sharp concentration of simple random tensors initiated in \cite{al2025sharp}. While the analysis in \cite{al2025sharp} is limited to \emph{symmetric} tensors,
this paper generalizes the theory to \emph{asymmetric} tensors. In so doing, 
we uncover a new phenomenon that only arises in the concentration of asymmetric tensors of order $p \ge 3$. To establish our results, we investigate a more general type of empirical process than \cite{al2025sharp}, for products of $p$ different function classes evaluated at $p$ different random variables. A key technical contribution is Theorem \ref{thm:key_lemma}, which establishes a sharp uniform bound on the $\ell_2$-norm of the product of the coordinate projection vectors associated with the different function classes. 

The proof of Theorem \ref{thm:key_lemma} is based on generic chaining across multiple function classes and builds on ideas of Mendelson \cite{mendelson2016upper} for controlling multiplier and product processes. The main new difficulty is that we must handle heterogeneous classes $\F^{(1)},\ldots,\F^{(p)}$ evaluated at different random variables, and exploit the resulting multilinear structure to carry out a coordinated chaining argument across the classes.  A crucial step in controlling the increments is to incorporate $\ell_{\infty}$-norm bounds on the relevant coordinate projection vectors at appropriate stages of the chaining argument; this introduces $\mathsf{polylog}(N)$ factors. While these terms may appear crude at first glance, they are in fact sharp and unavoidable in general. In particular, even if some class $\F^{(k)}$ is ``simple'' in a global complexity sense, its contribution to the overall bound must still include at least a term that reflects its largest coordinate fluctuation in $\ell_{\infty}$. This viewpoint provides a principled way to understand the behavior of products of coordinate projection vectors across heterogeneous classes $(\F^{(k)})_{k=1}^{p}$ and different underlying random variables ---an effect that is not captured by the approach in \cite{al2025sharp}. 
Remarks \ref{rem:main1_1} and \ref{rem:general-multi-product-process} further discuss how the results and techniques in this paper differ from those in \cite{al2025sharp}.

In the special case $p=2$, our results yield a new dimension-free concentration bound for the sample cross-covariance operator; see Remark \ref{rem:main1_1}. Cross-covariances are a core building block in many procedures, including principal component methods, canonical correlation analysis, and more broadly in the estimation of second-order structure in multivariate or functional data. Obtaining dimension-independent deviation bounds is particularly valuable in high- or infinite-dimensional regimes, where classical bounds scale poorly with the ambient dimension and become uninformative. Our estimates provide a useful theoretical tool for such settings. More generally, we anticipate that Theorem \ref{thm:main1} may have applications in various tensor-related statistical problems \cite{mccullagh2018tensor,bi2021tensors,auddy2024tensors}, including tensor estimation \cite{han2022optimal,diakonikolas2024implicit}, tensor completion \cite{yuan2016tensor,xia2021statistically}, tensor regression \cite{luo2024tensor}, and tensor singular value decomposition \cite{zhang2018tensor,zhang2019optimal}.  Our results may also be useful for independent component analysis, where estimation of fourth-order moment tensors (kurtosis) plays a central role \cite{auddy2023large}. Beyond tensor methodology, our findings may inform work in ensemble Kalman methods for inverse problems and data assimilation \cite{sanzstuarttaeb,bach2024machine,chada2021iterative}, learning theory and empirical risk minimization \cite{even2021concentration}, and the method of moments \cite{sherman2020estimating}, with potential applications to cryo-EM and multi-reference alignment problems \cite{perry2019sample,bandeira2020optimal,dou2024rates}.

Section \ref{sec:main} states  our two main results and discusses related work. We study concentration of simple random tensors and multi-product empirical processes in Sections \ref{sec:concentration} and \ref{sec:multi}, respectively.

\subsection{Notation}
Given two positive sequences $\{a_n\}$ and $\{b_n\}$, we write $a_n\lesssim b_n$ to denote that $a_n \le c b_n$ for some absolute constant $c>0$. If both $a_n \lesssim b_n$ and $b_n \lesssim a_n$ hold simultaneously, we write $a_n \asymp b_n$. If the constant $c$ depends on some parameter $\tau$, we use the notations $a_n \lesssim_{\tau} b_n,b_n\lesssim_{\tau} a_n$, and $a_n\asymp_{\tau} b_n$ to indicate this dependence. We denote the unit Euclidean sphere in $\R^d$ by $\S^{d-1}:=\{u\in \R^d: \|u\|_{\ell_2}=1\}$. For a vector $x=(x_i)_{i=1}^{N}\in \R^{N}$, we denote by $\left(x_i^*\right)_{i=1}^N$  the non-increasing rearrangement of $\left(\left|x_i\right|\right)_{i=1}^N$. 

\section{Main results}\label{sec:main}
Here we state the two main theorems of the paper and compare them with existing results. 
Subsection \ref{subsec:cencentraion_of_tensor} focuses on the concentration of simple random tensors and
Subsection \ref{subsec:multi-product} focuses on multi-product empirical processes.

\subsection{Concentration of simple random tensors}\label{subsec:cencentraion_of_tensor}

For integer $p\ge 2$ and $1\le k\le p$, let $(H^{(k)}, \langle \cdot, \cdot\rangle_{H^{(k)}}, \| \cdot \|_{H^{(k)}})$ be a separable real Hilbert space, and let $X^{(k)}$ be a centered $H^{(k)}$-valued random variable. Suppose that $\E|\langle X^{(k)}, v \rangle_{H^{(k)}}|^p < \infty$ for all $v \in U_{H^{(k)}}:= \{v \in H^{(k)} : \|v\|_{H^{(k)}} = 1 \}$ and let  $X^{(1)} \otimes \cdots \otimes X^{(p)}$ be the multilinear form
\begin{equation*}
(X^{(1)} \otimes \cdots \otimes X^{(p)}) (v_1, \ldots, v_p) := \langle X^{(1)},v_1 \rangle_{H^{(1)}} \cdots \langle X^{(p)}, v_p \rangle_{H^{(p)}}, \quad \forall \ (v_1, \ldots, v_p) \in H^{(1)} \times \cdots \times H^{(p)}.
\end{equation*}
Thus, $X^{(1)} \otimes \cdots \otimes X^{(p)}$ is a random variable taking values in the tensor space $H^{(1)}\otimes \cdots \otimes H^{(p)}$. We refer for instance to \cite[Section II.4]{reed1980methods} for background on tensor products of Hilbert spaces. When $p=2$ and $X^{(1)}=X^{(2)}=X\in H$, the second-order moment tensor $\E\, X^{\otimes 2}(v_1,v_2) = \E \langle X, v_1\rangle_H \langle X,v_2\rangle_H,$  $\, \forall\, (v_1, v_2) \in H \times H $ is called the \emph{covariance} of $X.$ By the Riesz representation theorem, this bilinear form can be represented by the \emph{covariance operator} $\Sigma: H \to H$ defined by 
$$ \langle v_1, \Sigma v_2 \rangle_H : =  \E \,X^{\otimes 2}(v_1,v_2), \qquad    \forall \ (v_1,v_2) \in H \times H.$$

 Given independent copies $X^{(k)}_1, \ldots, X^{(k)}_N$ of $X^{(k)}$ for $1\le k\le p$, our goal is to establish concentration inequalities for the sum of simple (rank-one) random tensors $\frac{1}{N} \sum_{i=1}^N X^{(1)}_i\otimes \cdots \otimes X^{(p)}_i$.   Specifically, we seek to obtain upper bounds on the deviation 
\begin{align}\label{eq:opnormdeviation}
&\bigg\|\frac{1}{N} \sum_{i=1}^N X_i^{(1)} \otimes \cdots \otimes X_i^{(p)}-\E\, X^{(1)} \otimes \cdots \otimes X^{(p)}\bigg\| \nonumber\\
&\quad = \sup_{v_k \in U_{H^{(k)}},1\le k\le p} \bigg| \frac{1}{N}\sum_{i=1}^N \langle X^{(1)}_i,v_1\rangle_{H^{(1)}} \cdots \langle X^{(p)}_i, v_p \rangle_{H^{(p)}}  -  \E\langle X^{(1)},v_1 \rangle_{H^{(1)}} \cdots \langle X^{(p)}, v_p \rangle_{H^{(p)}} \bigg|,
 \end{align}
where $\| \cdot \|$ denotes the norm in the tensor space $H^{(1)}\otimes \cdots \otimes H^{(p)}.$ Notice that for $p=2$ and $X^{(1)}=X^{(2)}=X\in H$, the operator norm of the covariance operator $\Sigma: H \to H$ agrees with the norm of the covariance  $\E \,X^{\otimes 2}$ as a bilinear form in $H^{\otimes 2}.$ Consequently, we will slightly abuse notation and  denote by $\| \Sigma\|$ the operator norm of the covariance operator. Furthermore, to simplify the notation, we will henceforth omit the subscript $H^{(k)}$ in the inner-product $\langle \cdot, \cdot \rangle_{H^{(k)}}$ and norm $\| \cdot \|_{H^{(k)}}$. 

We will focus on Gaussian and sub-Gaussian random variables. Recall that an $H$-valued random variable $X$ is called \emph{(sub)-Gaussian} iff all one-dimensional projections $\langle X, v \rangle$ for $v \in H$ are (sub)-Gaussian real-valued random variables. Furthermore, $X$ is called \emph{pre-Gaussian} iff there exists a centered Gaussian random variable $Y$ in $H$ with the same covariance operator as that of $X.$

For a covariance operator $\Sigma$, the \emph{effective rank} is defined as \[
r(\Sigma): = \frac{\mathrm{Tr}(\Sigma)}{\| \Sigma\|}.
\]
Our first main result, Theorem \ref{thm:main1}, provides a sharp dimension-free bound for the deviation \eqref{eq:opnormdeviation} in terms of the effective ranks  $r(\Sigma^{(1)}),\ldots, r(\Sigma^{(p)})$ of the covariance operators associated with the random variables $X^{(1)},\ldots, X^{(p)}$. We prove Theorem \ref{thm:main1} in Section \ref{sec:concentration}.

\begin{theorem}\label{thm:main1}
     For any integer $p\ge 2$ and $1 \leq k \leq p$, let $X^{(k)}, X_1^{(k)}, \ldots, X_N^{(k)}$ be i.i.d. centered sub-Gaussian and pre-Gaussian random variables in $H^{(k)}$ with covariance operator $\Sigma^{(k)}$. Then, for any $q\ge 1$,
\[
\left(\E \bigg\|\frac{1}{N} \sum_{i=1}^N X_i^{(1)} \otimes \cdots \otimes X_i^{(p)}-\E\, X^{(1)} \otimes \cdots \otimes X^{(p)}\bigg\|^q \right)^{1/q} \lesssim_{p,q} \bigg(\prod_{k=1}^p\|\Sigma^{(k)}\|^{1 / 2}\bigg)\mathscr{E}_N\big((\Sigma^{(k)})_{k=1}^p\big),
\]
where
\begin{align*}
\mathscr{E}_N\big((\Sigma^{(k)})_{k=1}^p\big):= \bigg(\frac{\sum_{k=1}^p r(\Sigma^{(k)})}{N}\bigg)^{1/2}+\frac{1}{N}\prod_{k=1}^{p}\Big(r(\Sigma^{(k)})+\log N\Big)^{1/2}.
\end{align*}
Moreover, if $X^{(1)},\ldots, X^{(p)}, (X^{(1)}_i)_{i=1}^{N},\ldots,(X^{(p)}_i)_{i=1}^N$ are independent Gaussian, then
\[
\left(\E \bigg\|\frac{1}{N} \sum_{i=1}^N X_i^{(1)} \otimes \cdots \otimes X_i^{(p)}-\E\, X^{(1)} \otimes \cdots \otimes X^{(p)}\bigg\|^q \right)^{1/q}\asymp_{p,q}\bigg(\prod_{k=1}^p\|\Sigma^{(k)}\|^{1 / 2}\bigg)\mathscr{E}_N\big((\Sigma^{(k)})_{k=1}^p\big).
\]
\end{theorem}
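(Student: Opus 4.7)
The plan is to recast the deviation as a multi-product empirical process and apply Theorem \ref{thm:main2} for the upper bound, then exploit the explicit Gaussian structure for the matching lower bound. By the variational characterization of the tensor operator norm,
\[
\bigg\|\frac{1}{N}\sum_{i=1}^N X_i^{(1)}\otimes \cdots \otimes X_i^{(p)} - \E X^{(1)}\otimes \cdots \otimes X^{(p)}\bigg\| = \sup_{v_k \in U_{H^{(k)}}}\bigg|\frac{1}{N}\sum_{i=1}^N \prod_{k=1}^p \langle X_i^{(k)}, v_k\rangle - \E\prod_{k=1}^p \langle X^{(k)}, v_k\rangle\bigg|,
\]
so setting $\F^{(k)} := \{x \mapsto \langle x, v\rangle : v \in U_{H^{(k)}}\}$ places the problem exactly within the scope of the multi-product empirical process of Theorem \ref{thm:main2}.

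Next, I would evaluate the geometric complexities appearing in Theorem \ref{thm:main2} on these classes of linear functionals. The $L^2(X^{(k)})$-diameter of $\F^{(k)}$ equals $\|\Sigma^{(k)}\|^{1/2}$, and by Talagrand's majorizing measures theorem applied to the Gaussian process $v \mapsto \langle Y^{(k)}, v\rangle$ for a centered Gaussian $Y^{(k)}$ sharing the covariance $\Sigma^{(k)}$ (which exists by the pre-Gaussian hypothesis), the $\gamma_2$-functional of $\F^{(k)}$ is of order $\sqrt{\mathrm{Tr}(\Sigma^{(k)})} = \|\Sigma^{(k)}\|^{1/2}\sqrt{r(\Sigma^{(k)})}$. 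Feeding these estimates into Theorem \ref{thm:main2} should produce two contributions: a chaining term of size $\big(\sum_k r(\Sigma^{(k)})/N\big)^{1/2}\prod_k \|\Sigma^{(k)}\|^{1/2}$, matching the first summand of $\mathscr{E}_N$; and an envelope-driven term of size $N^{-1}\prod_k (r(\Sigma^{(k)}) + \log N)^{1/2}\prod_k \|\Sigma^{(k)}\|^{1/2}$, matching the second. The additive $\log N$ inside each factor captures the cost of controlling $\max_{i \le N}|\langle X_i^{(k)}, v_k\rangle|$ uniformly in $v_k$ via sub-Gaussian tails; this is precisely what Theorem \ref{thm:key_lemma} is designed to deliver.

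For the lower bound in the independent Gaussian case, I would test specific families of directions. Choosing each $v_k$ along the top eigenvector of $\Sigma^{(k)}$ reduces the centered sum to a product of i.i.d.\ real Gaussians, whose CLT-scale fluctuations yield the first summand of $\mathscr{E}_N$. For the second summand, the independence across modes allows the supremum over the product Gaussian process to factorize into $p$ Gaussian-chaos contributions; combining standard Gaussian anti-concentration with a Sudakov-type lower bound within each mode produces expected fluctuations of order $N^{-1}\prod_k (r(\Sigma^{(k)}) + \log N)^{1/2}\prod_k\|\Sigma^{(k)}\|^{1/2}$. The additive $\log N$ is intrinsic here: once $r(\Sigma^{(k)})\lesssim \log N$ for some mode, the maxima of the $N$ sub-Gaussian projections dominate the chaining entropy, and the tail contribution cannot be avoided.

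The main obstacle is establishing the sharp envelope factor $\prod_k (r(\Sigma^{(k)}) + \log N)^{1/2}$, which is the very phenomenon advertised in the abstract as being absent in the symmetric theory of \cite{al2025sharp}. A naive reduction from the symmetric setting overestimates the envelope by an extraneous logarithm in at least one coordinate, and avoiding it requires a uniform control on the $\ell_2$-norm of a product of $p$ \emph{distinct} coordinate projection vectors, one drawn from each class $\F^{(k)}$. This is the content of Theorem \ref{thm:key_lemma}, and its deployment is the central technical step that transfers multi-product empirical process control to the tensor concentration bound while revealing why the new logarithmic enhancement appears precisely when the effective ranks straddle the threshold $\log N$.
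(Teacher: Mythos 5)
Your upper-bound plan coincides with the paper's: recast the deviation as a multi-product empirical process over the linear-functional classes $\F^{(k)} = \{\langle\cdot,v\rangle : v\in U_{H^{(k)}}\}$, compute $d_{\psi_2}(\F^{(k)})\asymp\|\Sigma^{(k)}\|^{1/2}$ from sub-Gaussianity and $\gamma(\F^{(k)},\psi_2)\asymp\mathrm{Tr}(\Sigma^{(k)})^{1/2}$ from pre-Gaussianity plus Talagrand's majorizing measures theorem, then invoke Theorem~\ref{thm:main2}. That part is correct and is exactly what the paper does.

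Your lower-bound sketch has two genuine gaps. First, for the variance term you propose to test at the single direction $v_k = v_k^*$ (top eigenvector) for every $k$; this reduces to a CLT for the i.i.d.\ products $\prod_k Z_i^{(k)}$ and yields only $\prod_k\|\Sigma^{(k)}\|^{1/2}/\sqrt{N}$. The required term is $\prod_k\|\Sigma^{(k)}\|^{1/2}\big(\sum_k r(\Sigma^{(k)})/N\big)^{1/2}$, which is larger by the factor $\big(\sum_k r(\Sigma^{(k)})\big)^{1/2}$ and can be arbitrarily bigger when some effective rank is large. To capture this, the paper does \emph{not} collapse all modes: it keeps the supremum over $v_1\in U_{H^{(1)}}$, conditions on the remaining coordinate sequences, observes that the conditional process equals $(\tfrac1N\sum_i\prod_{k\ge2}\langle X_i^{(k)},v_k\rangle^2)^{1/2}\cdot X^{(1)}/\sqrt{N}$ in distribution, and extracts the Gaussian-width factor $\E\|X^{(1)}\|\asymp\|\Sigma^{(1)}\|^{1/2}r(\Sigma^{(1)})^{1/2}$; a single fixed direction loses this entirely. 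Second, the claim that the supremum "factorizes into $p$ Gaussian-chaos contributions" is false: the modes are coupled through the shared sample index $i$, so the process does not decouple into a product of per-mode suprema. The paper's actual argument for the second term (Claim~II) instead reduces to a single sample index $j$, fixes $v_k=v_k^*$ only for the modes with small effective rank $k>t$, conditions to isolate $\prod_{k\le t}\E\|X^{(k)}\|$, and then invokes Lemma~\ref{lemma:sup_Gaussian_products} to get $\E\max_{1\le j\le N}\prod_{k>t}|Z_j^{(k)}|\asymp_p(\log N)^{(p-t)/2}$. This max-over-a-single-index reduction is the mechanism that produces the $(\log N)^{(p-t)/2}$ factor; your Sudakov-style sketch, as stated, does not supply it.
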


To the best of our knowledge, Theorem \ref{thm:main1} is the first result in the literature to establish \emph{sharp dimension-free} bounds for the deviation \eqref{eq:opnormdeviation} under sub-Gaussian assumptions. Notably, the upper bound holds without requiring independence between the sequences $(X^{(k)}_i)_{i=1}^{N}$ and $(X^{(k')}_i)_{i=1}^{N}$ for $k\ne k'$; that is, no assumptions are made on the correlation structure across components. Moreover, Theorem \ref{thm:main1} shows that our expectation bound is sharp in the case of independent Gaussian data.

The remainder of this subsection provides a detailed discussion of Theorem \ref{thm:main1} and its connections to existing results. In Remark \ref{rem:main1_1}, we compare our result to existing concentration inequalities for sums of simple symmetric tensors and derive a straightforward corollary for finite-dimensional random tensors. Remark~\ref{rem:log_concave} presents an additional consequence of our theory concerning isotropic, unconditional, log-concave ensembles.

\begin{remark}\label{rem:main1_1}

For simplicity, in this remark we focus on the first-moment case $q=1$. When $p=2$, Theorem \ref{thm:main1} yields the following dimension-free concentration bound for the sample cross-covariance operator:
\begin{align*}
&\E \bigg\|\frac{1}{N}\sum_{i=1}^{N}X^{(1)}_i\otimes X^{(2)}_i-\E\, X^{(1)}\otimes X^{(2)}\bigg\|\\
&\lesssim \big(\|\Sigma^{(1)}\|\|\Sigma^{(2)}\|\big)^{1/2}\left(\bigg(\frac{r(\Sigma^{(1)})+r(\Sigma^{(2)})}{N}\bigg)^{1/2}+\frac{(r(\Sigma^{(1)})+\log N)^{1/2}(r(\Sigma^{(2)})+\log N)^{1/2}}{N}\right)\\
&\asymp \big(\|\Sigma^{(1)}\|\|\Sigma^{(2)}\|\big)^{1/2}\left(\bigg(\frac{r(\Sigma^{(1)})+r(\Sigma^{(2)})}{N}\bigg)^{1/2}+\frac{(r(\Sigma^{(1)})r(\Sigma^{(2)}))^{1/2}}{N}\right),
\end{align*}
which improves upon the earlier bound in \cite[Lemma A.3]{ghattas2022non}. To our knowledge, this dimension-free concentration bound for the sample cross-covariance operator is new. It holds without any assumption on the dependence between the sequences $(X_i^{(1)})_{i=1}^{N}$ and $(X_i^{(2)})_{i=1}^{N}$, provided that both are sub-Gaussian and pre-Gaussian random variables. Moreover, it is sharp (up to constants) in view of the matching lower bound in the independent case established in Theorem~\ref{thm:main1}. 

When $X^{(1)}=\cdots = X^{(p)}=X$ and $X_i^{(1)}=\cdots = X_i^{(p)}=X_i \,$ for all $1\le i\le N,$ so that $\Sigma^{(1)}=\cdots=\Sigma^{(p)}=\Sigma,$ Theorem \ref{thm:main1} yields
\begin{align}\label{eq:rem_aux1}
\E \bigg\|\frac{1}{N}\sum_{i=1}^{N} X_i^{\otimes p}-\E\, X^{\otimes p}\bigg\| \lesssim_p \|\Sigma\|^{p/2} \bigg(\sqrt{\frac{r(\Sigma)}{N}}+\frac{(r(\Sigma)+\log N)^{p/2}}{N}\bigg) \asymp_p \|\Sigma\|^{p/2} \bigg(\sqrt{\frac{r(\Sigma)}{N}}+\frac{r(\Sigma)^{p/2}}{N}\bigg),
\end{align}
where the final equivalence follows from the bound $(\log N)^{p/2}/N\lesssim_p 1/\sqrt{N}$. This recovers the sharp concentration inequality for simple symmetric tensors established in \cite[Theorem 2.1]{al2025sharp}. We refer the reader to \cite{al2025sharp} for further references on the study of \eqref{eq:rem_aux1}, as well as related literature on sample covariance operators and random tensors.

Since the operator-norm deviation under consideration is invariant under permutations of the component random vectors, we may assume without loss of generality that the effective ranks satisfy $r(\Sigma^{(1)})\ge r(\Sigma^{(2)})\ge \cdots\ge r(\Sigma^{(p)})$. Suppose there exists an index $t$ such that $r(\Sigma^{(t)}) \ge \log N \ge r(\Sigma^{(t+1)})$. Then, the upper bound in Theorem \ref{thm:main1} simplifies to
\begin{align*}
&\mathbb{E} \bigg\| \frac{1}{N} \sum_{i=1}^N X_i^{(1)} \otimes \cdots \otimes X_i^{(p)}
- \mathbb{E}\, X^{(1)} \otimes \cdots \otimes X^{(p)} \bigg\| \\
&\quad \lesssim_p 
\bigg( \prod_{k=1}^p \|\Sigma^{(k)}\|^{1/2} \bigg)
\left( \bigg( \frac{r(\Sigma^{(1)})}{N} \bigg)^{1/2}
+ \frac{(\log N)^{(p-t)/2}}{N} \prod_{k=1}^{t} r(\Sigma^{(k)})^{1/2} \right).
\end{align*}
As will be shown in Proposition \ref{prop:lowerbound},  the upper bound can be reversed (up to a constant depending only on $p$) in the case of independent Gaussian data,  indicating that the logarithmic factors in the upper bound are, in general, unavoidable. This new phenomenon arises only for asymmetric tensors of order $p\ge 3$ ---specifically, when the effective ranks of the component covariances vary across the threshold $\log N$, with some exceeding and others falling below it. The random variables in the tensor product whose covariances have effective ranks smaller than $\log N$ contribute logarithmic factors to the second term in the upper bound. 
Since this behavior only arises for asymmetric tensors of order $p \ge 3,$
% To the best of our knowledge, this behavior 
it has not been observed in previous work on the concentration of simple symmetric tensors or sample covariance operators.

A corresponding finite-dimensional corollary is as follows. For each $1\le k\le p$, let $X^{(k)}, X_1^{(k)}, \ldots, X_N^{(k)} $ be i.i.d. standard Gaussian random vectors in $\R^{d_k}$, so that $\Sigma^{(k)}=I_{d_k}$ and $r(\Sigma^{(k)})=\mathrm{Tr}(\Sigma^{(k)})/\|\Sigma^{(k)}\|=d_k$. If 
\[d_1\ge \cdots \ge d_t \ge \log N \ge d_{t+1} \ge \cdots \ge d_p,
\] 
then
\begin{align*}
\E \bigg\|\frac{1}{N} \sum_{i=1}^N X_i^{(1)} \otimes \cdots \otimes X_i^{(p)}-\E\, X^{(1)} \otimes \cdots \otimes X^{(p)}\bigg\| \lesssim_p  \bigg(\frac{d_1}{N}\bigg)^{1/2}+\frac{(\log N)^{(p-t)/2}}{N}\prod_{k=1}^{t}d_k^{1/2}.
\end{align*}
\end{remark}

\begin{remark}\label{rem:log_concave}

A minor modification of our proof (see Subsection \ref{subsec:log-concave}) yields the following result for isotropic, unconditional, log-concave ensembles. 

Recall that a probability measure $\nu$ on $\R^d$ is log-concave if, for any $t\in [0,1]$ and any nonempty Borel measurable sets $A, B\subset \R^d$, the following inequality holds:
\[
\nu(tA+(1-t)B)\ge \nu(A)^t \nu(B)^{1-t}.
\]
For each $1 \leq k \leq p$, let $X^{(k)}$ be a random vector in $\R^{d_k}$, distributed according to an isotropic, unconditional,
log-concave measure. That is, for all $v\in \R^{d_k}$, we have $\E \langle X,v \rangle^2=\|v\|_{\ell_2}^2$; the distribution of $X^{(k)}$ is invariant under coordinate-wise sign changes; and the law of $X^{(k)}$ has a log-concave density. Let $X_1^{(k)}, \ldots, X_N^{(k)}$ be independent copies of $X^{(k)}$, for each $1\le k\le p$. Then, the following bound holds:
\[
\E \bigg\|\frac{1}{N} \sum_{i=1}^N X_i^{(1)} \otimes \cdots \otimes X_i^{(p)}-\E\, X^{(1)} \otimes \cdots \otimes X^{(p)}\bigg\| \lesssim_p \bigg(\frac{\sum_{k=1}^p d_k}{N}\bigg)^{1/2}+\frac{1}{N}\prod_{k=1}^{p}\big(d_k^{1/2}+\log N\big).
\]
In particular, if
\[
d_1\ge \cdots \ge d_t \ge (\log N)^2 \ge d_{t+1} \ge \cdots \ge d_p,
\] 
then
\begin{align*}
\E \bigg\|\frac{1}{N} \sum_{i=1}^N X_i^{(1)} \otimes \cdots \otimes X_i^{(p)}-\E\, X^{(1)} \otimes \cdots \otimes X^{(p)}\bigg\| \lesssim_p  \bigg(\frac{d_1}{N}\bigg)^{1/2}+\frac{(\log N)^{p-t}}{N}\prod_{k=1}^{t}d_k^{1/2}.
\end{align*}
We further remark that the corresponding $q$-th moment bound can be derived by integrating the tail estimate from Subsection~\ref{subsec:log-concave}. We omit the details for brevity and focus on the first-moment bound. 
\end{remark}

\subsection{Multi-product empirical processes}\label{subsec:multi-product}
This subsection contains the statement of our second main result, Theorem~\ref{thm:main2}, which gives a general upper bound on the suprema of multi-product empirical processes. Theorem~\ref{thm:main2} will be key to establish our upper bounds in Theorem \ref{thm:main1}. For $1\le k\le p$, let $X^{(k)}, X^{(k)}_1, \ldots, X^{(k)}_N \iid \mu^{(k)}$ be a sequence of random variables on a probability space $(\Omega^{(k)}, \mu^{(k)})$. We consider the \emph{multi-product} centered empirical process indexed by $p$ (potentially distinct) function classes $\F^{(1)},\ldots,\F^{(p)}$, where each $\F^{(k)}$ is defined on $(\Omega^{(k)}, \mu^{(k)})$. The process is given by 
\begin{align}\label{eq:multi-productempiricalprocess}
 (f^{(1)},\ldots,f^{(p)})\mapsto \frac{1}{N}\sum_{i=1}^N \prod_{k=1}^p f^{(k)}(X^{(k)}_i)-\E \prod_{k=1}^p f^{(k)}(X^{(k)}),\qquad f^{(k)}\in \F^{(k)}, \ 1\le k\le p.
\end{align}
In our proof of Theorem \ref{thm:main1}, the class $\F^{(k)}$ will be taken to contain all linear functionals in $H^{(k)}$ with norm one. 

For $p = 1$, \eqref{eq:multi-productempiricalprocess} recovers the standard empirical process \cite{van2023weak, talagrand2022upper}, and for $p = 2$ the quadratic and product empirical processes extensively studied, e.g., in \cite{rudelson1999random,klartag2005empirical,mendelson2007reconstruction,adamczak2010quantitative,mendelson2010empirical,mendelson2012generic,dirksen2015tail,bednorz2014,mendelson2016upper,bednorz2016bounds, koltchinskii2017concentration,han2022exact}. Beyond their theoretical significance, quadratic and product empirical processes play a crucial role in various statistical applications, such as covariance estimation \cite{koltchinskii2017concentration}. Our goal is to bound
\[
\sup_{f^{(k)} \in \mathcal{F}^{(k)},1\le k\le p}\bigg|\frac{1}{N} \sum_{i=1}^N \prod_{k=1}^p f^{(k)} (X^{(k)}_i)-\mathbb{E} \prod_{k=1}^p f^{(k)} (X^{(k)})\bigg|
\]
for any $p\ge 2$ in terms of quantities that reflect the geometric complexity of the indexing classes $(\F^{(k)})_{k=1}^{p}.$ To that end,  for any function $f$ on $(\Omega, \mu)$, we introduce the Orlicz $\psi_2$-norm of $f$ given by
\[
\|f\|_{\psi_2}:=\inf \left\{c>0: 
\mathbb{E}_{X \sim \mu}\insquare{ \exp \inparen{\frac{|f(X)|^2}{c^2}}} \leq 2\right\},
\]
with the convention $\inf \emptyset = \infty.$ 
Our upper bound will depend on 
\[
d_{\psi_2}(\F) := \sup_{f \in \F}\|f\|_{\psi_2}
\]
and on Talagrand's functional $\gamma(\mcF, \psi_2)$, whose definition we now recall.

\begin{definition}[{Talagrand's $\gamma$ functional}]\label{def:admissible_sequence_gamma_functional}
Let $(\F, d)$ be a metric space. An admissible sequence is an increasing sequence $(\F_s)_{s\ge 0} \subset \F$ which satisfies $\F_s \subset \F_{s+1}, \left|\F_0\right|=1, \left|\F_s\right| \leq 2^{2^s}$ for $s \ge 1,$ and $\bigcup_{s=0}^{\infty}\F_s$ is dense in $\F$. 
Let
\[
\gamma(\F, d) :=\inf \sup _{f \in \F} \sum_{s \geq 0} 2^{s /2} d\left(f, \F_s\right),
\]
where the infimum is taken over all admissible sequences. We write $\gamma(\mcF, \psi_2)$ when the distance on $\mathcal{\F}$ is induced by the $\psi_2$-norm.
\end{definition}

We are ready to state our second main result. Its proof can be found in Section \ref{sec:multi}.

\begin{theorem}\label{thm:main2} For any integer $p\ge 2$ and $1\le k\le p$, let $X^{(k)}, X^{(k)}_1, \ldots, X^{(k)}_N \iid \mu^{(k)}$ be a sequence of random variables on the probability space $(\Omega^{(k)}, \mu^{(k)})$, and let $\F^{(k)}$ be a class of functions defined on $(\Omega^{(k)},\mu^{(k)})$. Assume $0\in \F^{(k)}$ or $\F^{(k)}$ is symmetric (i.e., $f^{(k)}\in \F^{(k)}$ implies $-f^{(k)}\in \F^{(k)}$). Then, for any $q\ge 1$,
\begin{align*}
\left(\E \sup_{f^{(k)} \in \mathcal{F}^{(k)},1\le k\le p}\bigg|\frac{1}{N} \sum_{i=1}^N \prod_{k=1}^p f^{(k)} (X^{(k)}_i)-\mathbb{E} \prod_{k=1}^p f^{(k)} (X^{(k)})\bigg|^q\right)^{1/q} \lesssim_{p,q} \bigg(\prod_{k=1}^{p} d_{\psi_2}(\F^{(k)})\bigg) \mathscr{E}_N\big( (\F^{(k)})_{k=1}^p\big) ,
\end{align*}
where
\[
\mathscr{E}_N\big((\F^{(k)})_{k=1}^p\big)
:= \frac{\sum_{k=1}^{p}\bar{\gamma}(\F^{(k)},\psi_2)}{\sqrt{N}}+\frac{\prod_{k=1}^{p}\big(\bar{\gamma}(\F^{(k)},\psi_2)+(\log N)^{1/2}\big)}{N},\quad \bar{\gamma}(\F^{(k)},\psi_2):=\frac{\gamma(\F^{(k)},\psi_2)}{d_{\psi_2}(\F^{(k)})}.
\]
\end{theorem}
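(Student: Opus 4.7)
The strategy is generic chaining applied to the multi-product empirical process, extending the quadratic and product empirical process techniques of Klartag--Mendelson, Mendelson, Dirksen, Bednorz, and Koltchinskii to $p$ factors. First I would normalize: replacing each $f^{(k)}$ by $f^{(k)}/d_{\psi_2}(\F^{(k)})$ reduces to the case $d_{\psi_2}(\F^{(k)})=1$ for every $k$, under which $\bar\gamma(\F^{(k)},\psi_2)=\gamma(\F^{(k)},\psi_2)$. The assumption that $0\in\F^{(k)}$ or that $\F^{(k)}$ is symmetric lets us include $0$ in an admissible sequence and apply a standard symmetrization with independent Rademacher variables $(\varepsilon_i)_{i=1}^N$, reducing matters to controlling
\[
\E\sup_{f^{(k)}\in\F^{(k)}}\bigg|\frac{1}{N}\sum_{i=1}^N\varepsilon_i\prod_{k=1}^p f^{(k)}(X^{(k)}_i)\bigg|.
\]

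I would then set up a multi-scale chaining argument. For each $1\le k\le p$ fix an admissible sequence $(\F^{(k)}_s)_{s\ge 0}$ that achieves $\gamma(\F^{(k)},\psi_2)$ up to a universal constant, and let $\pi_s f^{(k)}\in\F^{(k)}_s$ be the nearest point to $f^{(k)}$ in the $\psi_2$ metric. Using the product-telescoping identity
\[
\prod_{k=1}^p a_k-\prod_{k=1}^p b_k=\sum_{j=1}^p\bigg(\prod_{k<j}a_k\bigg)(a_j-b_j)\bigg(\prod_{k>j}b_k\bigg),
\]
applied at every level with $a_k=\pi_{s+1}f^{(k)}$ and $b_k=\pi_s f^{(k)}$, each scale-$s$ increment of the Rademacher process splits into $p$ pieces indexed by $j\in\{1,\ldots,p\}$, each involving the small factor $(\pi_{s+1}-\pi_s)f^{(j)}$ together with bounded surrounding factors. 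Conditionally on $(X^{(k)}_i)_{i,k}$, each such piece is a linear combination of the $\varepsilon_i$ with sub-Gaussian coefficients, so a Bernstein-type tail bound yields a two-regime control in terms of the empirical $\ell_2$ and $\ell_\infty$ norms of the integrand.

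The central step is to turn these empirical-norm tails into a chain-summable bound. This is where Theorem \ref{thm:key_lemma} enters: it provides a sharp uniform estimate on the $\ell_2$ norm of the coordinatewise product vector $\bigl(\prod_{k=1}^p f^{(k)}(X^{(k)}_i)\bigr)_{i=1}^N$ over $(f^{(k)})\in\prod_{k=1}^p\F^{(k)}_s$, with an additional $\sqrt{\log N}$ correction for each coordinate whose $\ell_\infty$ contribution dominates. Plugging this uniform estimate into the Bernstein tails, performing a union bound over the chain pairs at scale $s$ (whose cardinality is at most $2^{2^{s+1}}$), and summing over $s\ge 0$ and $j\in\{1,\ldots,p\}$ produces exactly the two terms of $\mathscr{E}_N((\F^{(k)})_{k=1}^p)$: the linear term $N^{-1/2}\sum_{k=1}^p\bar\gamma(\F^{(k)},\psi_2)$ comes from aggregating the Gaussian regime across the $p$ telescoping pieces, while the product term $N^{-1}\prod_{k=1}^p(\bar\gamma(\F^{(k)},\psi_2)+(\log N)^{1/2})$ comes from the sub-exponential regime, where each class contributes one, and only one, $\sqrt{\log N}$ factor.

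The main obstacle is Theorem \ref{thm:key_lemma} itself, namely controlling the empirical $\ell_2$ norm of the $p$-fold product \emph{uniformly} over $p$ distinct function classes without accumulating spurious $(\log N)^{p/2}$ factors. For $p=2$ this is the classical product/quadratic process lemma of Mendelson and its refinements, but for $p\ge 3$ one cannot simply pair off factors: the admissible chains in the $p$ classes must be aligned so that at each scale $s$ the extra $\log N$ cost is charged additively across coordinates rather than multiplicatively. The telescoping identity above is what makes this alignment possible, isolating a single coordinate $j$ per increment and thereby preventing the cross-multiplication of $\log N$ factors in the sub-exponential regime that would otherwise spoil sharpness against the Gaussian lower bound of Theorem \ref{thm:main1}.
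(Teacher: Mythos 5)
Your proposal is correct in structure and follows essentially the same route as the paper: symmetrization, the product-telescoping decomposition of the chain, a Hoeffding/Bernstein two-regime tail at each scale combined with Mendelson-style large/small coordinate splitting, and Theorem \ref{thm:key_lemma} to control the empirical $\ell_2$ norm of the (here, $(p-1)$-fold) products of coordinate projections that appear as frozen multiplicative factors in each telescoping piece. One small correction: the hypothesis that $0\in\F^{(k)}$ or $\F^{(k)}$ is symmetric is not what licenses symmetrization (which needs no such assumption); its role in the paper is to guarantee $\gamma(\F^{(k)},\psi_2)\gtrsim d_{\psi_2}(\F^{(k)})$, i.e.\ $\bar\gamma(\F^{(k)},\psi_2)\gtrsim 1$, which is used to absorb the $2^{s_0/2}d_{\psi_2}(\F^{(k)})$ remainder term from $\widetilde\Lambda_{s_0,u}$ into $\gamma(\F^{(k)},\psi_2)$.
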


\begin{remark}\label{rem:general-multi-product-process} 
 
Here, for each $1\le k\le p$, the quantities $\gamma(\F^{(k)},\psi_2)$ and $d_{\psi_2}(\F^{(k)})$ are defined with respect to the probability measure $\mu^{(k)}$ on the probability space $(\Omega^{(k)},\mu^{(k)})$. Moreover, the sequences $\big(X_i^{(k)}\big)_{i=1}^N$ and $\big(X_i^{(k^{\prime})}\big)_{i=1}^{N}$ are not necessarily independent for $k\ne k^{\prime}$.
This result generalizes the upper bound established in \cite[Theorem 1.13]{mendelson2016upper} for the supremum of product empirical processes indexed by function classes $\mcF$ and $\mcH$, that is,
\[
(f,h) \mapsto \frac{1}{N} \sum_{i=1}^N f(X_i)h(X_i)-\E fh,\qquad f\in \F, \ h\in \mcH,
\]
extending it to higher-order settings, including the case where each factor in the product involves a different random variable.

In the special case where $\Omega^{(1)}=\cdots=\Omega^{(p)}=\Omega, \mu^{(1)}=\cdots=\mu^{(p)}=\mu, \F^{(1)}= \cdots=\F^{(p)}=\F$, and $X^{(1)}=\cdots =X^{(p)}=X$, with $X^{(1)}_i=\cdots = X^{(p)}_i=X_i \ $ for all $1\le i\le N,$ Theorem \ref{thm:main2} (with $q=1$) implies
\begin{align}\label{eq:rem_aux2}
\E \sup_{f\in \F} \bigg|\frac{1}{N}\sum_{i=1}^N f^p(X_i) -\E f^p(X)\bigg|&\le \E \sup_{f^{(k)}\in \F,1\le k\le p} \bigg|\frac{1}{N}\sum_{i=1}^N \prod_{k=1}^{p}f^{(k)}(X_i) -\E f^p(X)\bigg| \nonumber\\
&\lesssim_p d^p_{\psi_2}(\F)\bigg(\frac{\bar{\gamma}(\F,\psi_2)}{\sqrt{N}}+\frac{\big((\bar{\gamma}(\F,\psi_2)+(\log N)^{1/2}\big)^p}{N}\bigg) \nonumber\\
&\asymp_p \frac{\gamma(\F,\psi_2)d^{p-1}_{\psi_2}(\F)}{\sqrt{N}}+\frac{\gamma^{p}(\F,\psi_2)}{N},
\end{align}
where the final equivalence follows from the bound $(\log N)^{p/2}/N\lesssim_p 1/\sqrt{N}$ and $\bar{\gamma}(\F,\psi_2)\gtrsim 1$ (see \cite[Lemma 4.6]{al2025sharp}). This recovers the sharp expectation bound for the supremum of $L_p$ empirical processes established in \cite[Theorem 2.2]{al2025sharp}. We refer the reader to \cite{al2025sharp} for further discussion of \eqref{eq:rem_aux2}, as well as additional references to related work concerning quadratic and product empirical processes.

Similar to the discussion in Remark \ref{rem:main1_1}, the logarithmic factors in the upper bound of Theorem \ref{thm:main2} are, in general, unavoidable. This new phenomenon arises only in multi-product empirical processes involving at least three components in the product (i.e. $p\ge 3$), and when the complexity parameters $\bar{\gamma}(\F^{(k)},\psi_2)$ of the associated function classes vary significantly  ---specifically, when some exceed $(\log N)^{1/2}$ while others fall below this threshold. Consequently, such behavior has not been observed in previous work on the theory of quadratic and product empirical processes.
\end{remark}

\section{Concentration of simple random tensors}\label{sec:concentration}

This section contains the proof of our first main result, Theorem \ref{thm:main1}. The upper bound for sub-Gaussian data follows from an application of Theorem \ref{thm:main2}, where for each $1\le k\le p$, the class $\F^{(k)}$ consists of all linear functionals in $H^{(k)}$ with norm one. To establish the matching lower bound for independent Gaussian data, we adapt techniques from \cite[Theorem 4]{koltchinskii2017concentration} to the multi-product setting and make a key observation that precisely captures the logarithmic factors appearing in the upper bound; see Proposition \ref{prop:lowerbound} for the proof and Remark \ref{rem:lower_bound} for further discussion.

\begin{proof}[Proof of Theorem \ref{thm:main1}]

We have
 \begin{align*}
\bigg\|\frac{1}{N}\sum_{i=1}^{N} X^{(1)}_i\otimes \cdots \otimes X^{(p)}_{i} -\E\, X^{(1)}\otimes \cdots\otimes X^{(p)} \bigg\| &= \sup_{\|v_1\|=\cdots =\|v_p\|=1} \bigg|\frac{1}{N}\sum_{i=1}^N \prod_{k=1}^{p} \langle X^{(k)}_i, v_k\rangle- \E \prod_{k=1}^{p} \langle X^{(k)} , v_k \rangle \bigg|\\
&=\sup_{f^{(k)} \in \mathcal{F}^{(k)},1\le k\le p}\bigg|\frac{1}{N} \sum_{i=1}^N \prod_{k=1}^p f^{(k)} (X^{(k)}_i)-\mathbb{E} \prod_{k=1}^p f^{(k)} (X^{(k)})\bigg|,
 \end{align*}
 where $\F^{(k)}:=\{\langle\cdot, v\rangle: v \in U_{H^{(k)}}\}, U_{H^{(k)}}:=\{v \in H^{(k)}:\|v\| =1\}$ for each $1\le k\le p$. It is clear that $\F^{(k)}$ is symmetric. Recall that for $1\le k\le p$, $X^{(k)}, X_1^{(k)}, \ldots, X_N^{(k)}$ are i.i.d. centered sub-Gaussian and pre-Gaussian random variables in $H^{(k)}$ with covariance operator $\Sigma^{(k)}$. Since $X^{(k)}$ is sub-Gaussian, the $\psi_2$-norm of linear functionals is equivalent to the $L_2$-norm. Hence,
\begin{align*}\label{eq:aux1}
d_{\psi_2}(\F^{(k)})=\sup_{f^{(k)}\in\mathcal{F}^{(k)}}\|f^{(k)}\|_{\psi_2}\asymp \sup_{f^{(k)}\in\mathcal{F}^{(k)}}\|f^{(k)}\|_{L_2}=\sup_{\|v\|= 1}\big(\E\langle X^{(k)},v\rangle^2\big)^{1/2}=\|\Sigma^{(k)}\|^{1/2}.
\end{align*}
Moreover, since $X^{(k)}$ is pre-Gaussian, there exists a centered Gaussian random variable $Y^{(k)}$ in $H^{(k)}$ with the same covariance $\Sigma^{(k)}$. This means that, for $u, v \in U_{H^{(k)}}$, the canonical metric associated with $Y^{(k)}$ satisfies
\[
d_{Y^{(k)}}(u, v)
= \big(\mathbb{E}(\langle Y^{(k)}, u\rangle-\langle Y^{(k)}, v\rangle)^2\big)^{1/2}=\big(\langle u-v, \Sigma^{(k)} (u-v) \rangle\big)^{1/2}
=\|\langle\cdot, u\rangle-\langle\cdot, v\rangle\|_{L_2(\mu^{(k)})},
\]
where $\mu^{(k)}$ is the law of $X^{(k)}$. Using Talagrand's majorizing-measure theorem \cite[Theorem 2.10.1]{talagrand2022upper}, we deduce that
\[
\gamma(\mathcal{F}^{(k)}, \psi_2)\asymp \gamma(\mathcal{F}^{(k)}, L_2)=\gamma\big(U_{H^{(k)}}, d_{Y^{(k)}}\big) \asymp \mathbb{E} \sup _{v \in U_{H^{(k)}}}\langle Y^{(k)}, v\rangle = \E \|Y^{(k)}\| \asymp \big(\E \|Y^{(k)}\|^2\big)^{1/2}=\mathrm{Tr}(\Sigma^{(k)})^{1/2}.
\]
Therefore,
 \[
 d_{\psi_2}(\F^{(k)}) \asymp \|\Sigma^{(k)}\|^{1/2},\quad \gamma(\F^{(k)},\psi_2)\asymp \mathrm{Tr}(\Sigma^{(k)})^{1/2},\qquad 1\le k\le p.
 \]
Consequently, the upper bound stated in Theorem \ref{thm:main1} follows directly from Theorem \ref{thm:main2}. The matching lower bound for independent Gaussian data is proved in Proposition \ref{prop:lowerbound} below (the case $q=1$). Moreover, for any $q\ge 1$,
\[
\left(\E \bigg\|\frac{1}{N} \sum_{i=1}^N X_i^{(1)} \otimes \cdots \otimes X_i^{(p)}-\E\, X^{(1)} \otimes \cdots \otimes X^{(p)}\bigg\|^q \right)^{1/q}\ge \E \bigg\|\frac{1}{N} \sum_{i=1}^N X_i^{(1)} \otimes \cdots \otimes X_i^{(p)}-\E\, X^{(1)} \otimes \cdots \otimes X^{(p)}\bigg\|,
\]
so the first-moment lower bound immediately implies the same lower bound for all $q$-th moments.
\end{proof}

\begin{proposition}\label{prop:lowerbound} For any integer $p\ge 2$ and $1 \leq k \leq p$, let $X^{(k)}, X_1^{(k)}, \ldots, X_N^{(k)}$ be i.i.d. centered Gaussian random variables in $H^{(k)}$ with covariance operator $\Sigma^{(k)}$, and suppose that $X^{(1)},\ldots, X^{(p)}, (X^{(1)}_i)_{i=1}^{N},\ldots,(X^{(p)}_i)_{i=1}^N,$ are independent. Then,
\[
\E \bigg\|\frac{1}{N} \sum_{i=1}^N X_i^{(1)} \otimes \cdots \otimes X_i^{(p)}-\E\, X^{(1)} \otimes \cdots \otimes X^{(p)}\bigg\| \gtrsim_p\bigg(\prod_{k=1}^p\|\Sigma^{(k)}\|^{1 / 2}\bigg)\mathscr{E}_N\big((\Sigma^{(k)})_{k=1}^p\big),
\]
where
\[
\mathscr{E}_N\big((\Sigma^{(k)})_{k=1}^p\big):= \bigg(\frac{\sum_{k=1}^p r(\Sigma^{(k)})}{N}\bigg)^{1/2}+\frac{1}{N}\prod_{k= 1}^{p}\Big(r(\Sigma^{(k)})+\log N\Big)^{1/2}.
\]
\end{proposition}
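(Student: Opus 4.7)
The plan is to adapt the test-vector argument of Koltchinskii~\cite{koltchinskii2017concentration} for sample covariance to the multi-product setting, obtaining two separate lower bounds matching the two terms of $\mathscr{E}_N$. Since $X^{(1)},\ldots,X^{(p)}$ are independent and centered, $\mathbb{E}\,X^{(1)}\otimes\cdots\otimes X^{(p)}=0$, so it suffices to lower bound $\mathbb{E}\sup_{v_k\in U_{H^{(k)}}}\bigl|\frac{1}{N}\sum_i\prod_k\langle X_i^{(k)},v_k\rangle\bigr|$, and for any (possibly data-dependent) unit vectors $v_1,\ldots,v_p$ the supremum is bounded below by the corresponding evaluation. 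For the first term $(\sum_k r(\Sigma^{(k)})/N)^{1/2}\prod_k\sqrt{\|\Sigma^{(k)}\|}$, I fix an index $k$, set $v_j$ to a leading eigenvector of $\Sigma^{(j)}$ for each $j\ne k$, and take the partial supremum only over $v_k$; this equals $\bigl\|\frac{1}{N}\sum_i Z_iX_i^{(k)}\bigr\|$ with $Z_i:=\prod_{j\ne k}\langle X_i^{(j)},v_j\rangle$. Conditional on $(Z_i)$, this norm equals in distribution $(\|Z\|_{\ell_2}/N)\|Y^{(k)}\|$ for $Y^{(k)}$ Gaussian with covariance $\Sigma^{(k)}$, giving expectation $\asymp(\|Z\|_{\ell_2}/N)\sqrt{\mathrm{Tr}(\Sigma^{(k)})}$. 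A Paley--Zygmund argument applied to $\|Z\|_{\ell_2}^2$, whose mean equals $N\prod_{j\ne k}\|\Sigma^{(j)}\|$ and whose ratio of fourth to squared second moment is $p$-controlled, yields $\mathbb{E}\|Z\|_{\ell_2}\gtrsim_p\sqrt{N\prod_{j\ne k}\|\Sigma^{(j)}\|}$; maximizing over $k$ and using $\max_k r(\Sigma^{(k)})\ge p^{-1}\sum_k r(\Sigma^{(k)})$ recovers the first term.

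For the second term $\frac{1}{N}\prod_k\sqrt{\|\Sigma^{(k)}\|(r(\Sigma^{(k)})+\log N)}$, the key observation is that a \emph{bulk} contribution of order $\sqrt{\mathrm{Tr}(\Sigma^{(k)})}$ from components with $r(\Sigma^{(k)})\ge\log N$ and a heavy-tailed Gaussian-chaos contribution of order $(\log N)^{1/2}$ per coordinate from components with $r(\Sigma^{(k)})<\log N$ can be extracted simultaneously at a single common index $i^*$, precisely because the $X^{(k)}$ are independent across $k$. Let $A:=\{k:r(\Sigma^{(k)})\ge\log N\}$, let $u^{(k)}$ denote a leading eigenvector of $\Sigma^{(k)}$ for $k\in A^c$, and define $g_i^{(k)}:=\langle X_i^{(k)},u^{(k)}\rangle/\sqrt{\|\Sigma^{(k)}\|}$ so that the $g_i^{(k)}$ are i.i.d.\ standard Gaussians. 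Choose $i^*:=\arg\max_{i\le N}\prod_{k\in A^c}|g_i^{(k)}|$, which is measurable with respect to $\{X_i^{(k)}:i\le N,k\in A^c\}$, and set $v_k:=X_{i^*}^{(k)}/\|X_{i^*}^{(k)}\|$ for $k\in A$ and $v_k:=\mathrm{sign}(g_{i^*}^{(k)})u^{(k)}$ for $k\in A^c$. The diagonal ($i=i^*$) contribution then equals $\frac{1}{N}\prod_{k\in A}\|X_{i^*}^{(k)}\|\cdot\prod_{k\in A^c}\sqrt{\|\Sigma^{(k)}\|}|g_{i^*}^{(k)}|$. By independence across $k$, $i^*$ is independent of $\{X_i^{(k)}:k\in A\}$, so each $\|X_{i^*}^{(k)}\|$ has the same law as $\|X^{(k)}\|\asymp_p\sqrt{\mathrm{Tr}(\Sigma^{(k)})}$; and concentration of the maximum of $N$ i.i.d.\ sub-Weibull variables of exponent $2/|A^c|$ gives $\max_i\prod_{k\in A^c}|g_i^{(k)}|\gtrsim_p(\log N)^{|A^c|/2}$ with constant probability. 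Multiplying matches the target second term.

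The main obstacle is controlling the off-diagonal sum $\frac{1}{N}\sum_{i\ne i^*}\prod_k\langle X_i^{(k)},v_k\rangle$ so that it does not cancel the diagonal term. Conditional on the data defining the $v_k$, each off-diagonal summand is a product of conditionally independent centered Gaussians whose variances are bounded by $\|\Sigma^{(k)}\|$, so its conditional second moment is at most $\prod_k\|\Sigma^{(k)}\|$; conditional independence across $i\ne i^*$ then bounds the conditional variance of the whole off-diagonal sum by $\prod_k\|\Sigma^{(k)}\|/N$, giving typical magnitude $\sqrt{\prod_k\|\Sigma^{(k)}\|/N}$. The diagonal term dominates this off-diagonal noise exactly in the regime where the second term of $\mathscr{E}_N$ exceeds the first (up to a $p$-dependent factor); in the complementary regime, the first-term construction is already dominant. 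Combining the two constructions and passing from a high-probability lower bound to an expectation bound by a standard Paley--Zygmund step yields $\gtrsim_p\mathscr{E}_N\bigl((\Sigma^{(k)})_{k=1}^p\bigr)\prod_k\sqrt{\|\Sigma^{(k)}\|}$, completing the plan.
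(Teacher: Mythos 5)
Your construction of test vectors and the two-term split closely mirror the paper's two Claims, and the first-term argument (condition on the other factors, write the Gaussian norm as $(\|Z\|_{\ell_2}/N)\|Y^{(k)}\|$, then lower bound $\E\|Z\|_{\ell_2}$ by Paley--Zygmund) is essentially the paper's Claim I in a slightly rearranged order. The second term is where you diverge: the paper never confronts the off-diagonal sum at all. It conditions on the standardized variables $Z^{(k)}_i$ for $k \ge t+1$, then for each fixed $j$ takes a \emph{conditional expectation over $X^{(k)}_i$, $i \ne j$, $k \le t$, inside the supremum}. By Jensen this is a lower bound, and since each such $X^{(k)}_i$ is centered it annihilates every summand except $i=j$; taking $\max_j$ and pairing with Lemma~A.1 gives exactly the diagonal term with no noise term to control. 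Your approach instead fixes the test vectors at a data-dependent $i^*$ and argues that the off-diagonal sum is small, which is a viable route but forces you to control a term the paper avoids.

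And there is a genuine error in that control. You write that ``conditional on the data defining the $v_k$, each off-diagonal summand is a product of conditionally independent centered Gaussians whose variances are bounded by $\|\Sigma^{(k)}\|$, so its conditional second moment is at most $\prod_k\|\Sigma^{(k)}\|$.'' This is false for the $k \in A^c$ factors. Once you condition on $\{X^{(k)}_i : i \le N,\, k \in A^c\}$ (which you must, since $i^*$ and the signs $\mathrm{sign}(g^{(k)}_{i^*})$ depend on all of them), the quantity $\langle X^{(k)}_i, v_k\rangle = \mathrm{sign}(g^{(k)}_{i^*})\, g^{(k)}_i\, \|\Sigma^{(k)}\|^{1/2}$ is \emph{deterministic}, not a centered Gaussian, and $(g^{(k)}_i)^2$ is of course not bounded by $1$. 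The correct conditional variance of the off-diagonal sum is
\[
\frac{1}{N^2}\Bigl(\prod_{k\in A}\langle \Sigma^{(k)}v_k,v_k\rangle\Bigr)\sum_{i\ne i^*}\prod_{k\in A^c}(g^{(k)}_i)^2\,\|\Sigma^{(k)}\|,
\]
which is a random quantity whose \emph{expectation} is of order $\prod_k\|\Sigma^{(k)}\|/N$, but which is not pointwise bounded by it. The estimate can be salvaged by a Markov bound showing this random conditional variance is within a constant of its mean on an event of probability $\ge 1/2$, intersected with the events you already need for the diagonal term (and note you also need $A\ne\emptyset$ so the off-diagonal summands are conditionally centered at all; when $A=\emptyset$ the second term of $\mathscr{E}_N$ is already dominated by the first and Claim~I suffices). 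As written, though, the conditional second-moment step does not hold, and the paper's marginalization trick is both simpler and avoids the issue entirely.
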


\begin{proof}[Proof of Proposition \ref{prop:lowerbound}]
First, we may assume without loss of generality that the effective ranks satisfy $r(\Sigma^{(1)})\ge r(\Sigma^{(2)})\ge \cdots\ge r(\Sigma^{(p)})$. It is straightforward to check that
    \[
\mathscr{E}_N\big((\Sigma^{(k)})_{k=1}^p\big)\asymp_p \begin{cases}
    \Big(\frac{\sum_{k=1}^p r(\Sigma^{(k)})}{N}\Big)^{1/2}+\frac{(\log N)^{(p-t)/2}}{N}\prod_{k=1}^{t}r(\Sigma^{(k)})^{1/2} & \mathrm{ if } \ \exists \ t: r(\Sigma^{(t)})\ge \log N\ge r(\Sigma^{(t+1)}), \\
  \Big(\frac{\sum_{k=1}^p r(\Sigma^{(k)})}{N}\Big)^{1/2}  & \text{otherwise}. \\
\end{cases}
    \]

The conclusion follows directly from the following two claims:

\textbf{Claim I.} It holds that
\begin{align*}
\E \bigg\|\frac{1}{N} \sum_{i=1}^N X_i^{(1)} \otimes \cdots \otimes X_i^{(p)}-\E\, X^{(1)} \otimes \cdots \otimes X^{(p)}\bigg\| \gtrsim_p\bigg(\prod_{k=1}^p\|\Sigma^{(k)}\|^{1 / 2}\bigg) \bigg(\frac{\sum_{k=1}^p r(\Sigma^{(k)})}{N}\bigg)^{1/2}.
\end{align*}

\textbf{Claim II.}
If there exists an index $t$ such that $r(\Sigma^{(t)})\ge \log N \ge r(\Sigma^{(t+1)})$, then
\begin{align*}
\E \bigg\|\frac{1}{N} \sum_{i=1}^N X_i^{(1)} \otimes \cdots \otimes X_i^{(p)}-\E\, X^{(1)} \otimes \cdots \otimes X^{(p)}\bigg\| \gtrsim_p\bigg(\prod_{k=1}^p\|\Sigma^{(k)}\|^{1 / 2}\bigg) \frac{(\log N)^{(p-t)/2}}{N}\prod_{k=1}^{t}r(\Sigma^{(k)})^{1/2}.
\end{align*}

Now we prove \textbf{Claim I} and \textbf{Claim II}.

\textbf{Proof of Claim I:}
By the assumption of independence and the definition of the tensor norm, we obtain
\begin{align*}
&\E \bigg\|\frac{1}{N} \sum_{i=1}^N X_i^{(1)} \otimes \cdots \otimes X_i^{(p)}-\E\, X^{(1)} \otimes \cdots \otimes X^{(p)}\bigg\|=\E \bigg\|\frac{1}{N} \sum_{i=1}^N X_i^{(1)} \otimes \cdots \otimes X_i^{(p)}\bigg\| \\
&= \E \sup_{\|v_1\|= \cdots =\|v_p\|=1} \bigg| \frac{1}{N}\sum_{i=1}^N \langle X^{(1)}_i,v_1\rangle \cdots \langle X^{(p)}_i, v_p \rangle \bigg|\\
&= \E \sup_{\|v_2\|= \cdots =\|v_p\|=1}\bigg\| \frac{1}{N}\sum_{i=1}^N \langle X^{(2)}_i,v_2\rangle \cdots \langle X^{(p)}_i, v_p \rangle X_i^{(1)} \bigg\|\\
&=\E_{(X^{(k)}_i)_{i=1}^{N}, 2\le k\le p} \left[ \E_{(X^{(1)}_i)_{i=1}^{N}}  \sup_{\|v_2\|= \cdots =\|v_p\|=1}\bigg\| \frac{1}{N}\sum_{i=1}^N \langle X^{(2)}_i,v_2\rangle \cdots \langle X^{(p)}_i, v_p \rangle X_i^{(1)} \bigg\|\right].
\end{align*}
Note that, conditionally on $\langle X^{(k)}_i, v_k \rangle, 1\le i\le  N, 2\le k\le p$, the distribution of the random variable
\[
\frac{1}{N}\sum_{i=1}^N \langle X^{(2)}_i,v_2\rangle \cdots \langle X^{(p)}_i, v_p \rangle X_i^{(1)}
\]
is Gaussian and it coincides with the distribution of the random variable
\[
\bigg(\frac{1}{N} \sum_{i=1}^{N}\langle X^{(2)}_i, v_2\rangle^{2}\cdots \langle X^{(p)}_i, v_p\rangle^{2} \bigg)^{1 / 2} \frac{X^{(1)}}{\sqrt{N}}.
\]
Therefore,
\begin{align*}
    &\E_{(X^{(k)}_i)_{i=1}^{N}, 2\le k\le p} \left[ \E_{(X^{(1)}_i)_{i=1}^{N}}  \sup_{\|v_2\|= \cdots =\|v_p\|=1}\bigg\| \frac{1}{N}\sum_{i=1}^N \langle X^{(2)}_i,v_2\rangle \cdots \langle X^{(p)}_i, v_p \rangle X_i^{(1)} \bigg\|\right]\\
    &=\E_{(X^{(k)}_i)_{i=1}^{N}, 2\le k\le p} \left[ \sup_{\|v_2\|= \cdots =\|v_p\|=1}\bigg(\frac{1}{N} \sum_{i=1}^{N}\langle X^{(2)}_i, v_2\rangle^{2}\cdots \langle X^{(p)}_i, v_p\rangle^{2} \bigg)^{1 / 2} \frac{\E \|X^{(1)}\|}{\sqrt{N}}\right]\\
    &=\frac{\E \|X^{(1)}\|}{\sqrt{N}}\E_{(X^{(k)}_i)_{i=1}^{N}, 2\le k\le p}\left[ \sup_{\|v_2\|= \cdots =\|v_p\|=1}\bigg(\frac{1}{N} \sum_{i=1}^{N}\langle X^{(2)}_i, v_2\rangle^{2}\cdots \langle X^{(p)}_i, v_p\rangle^{2} \bigg)^{1 / 2} \right]\\
    &\ge \frac{\E \|X^{(1)}\|}{\sqrt{N}} \sup_{\|v_2\|= \cdots =\|v_p\|=1}\E_{(X^{(k)}_i)_{i=1}^{N}, 2\le k\le p}\bigg(\frac{1}{N} \sum_{i=1}^{N}\langle X^{(2)}_i, v_2\rangle^{2}\cdots \langle X^{(p)}_i, v_p\rangle^{2} \bigg)^{1 / 2}\\
    &=\frac{\E \|X^{(1)}\|}{\sqrt{N}}  \sup_{\|v_2\|= \cdots =\|v_p\|=1} \langle \Sigma^{(2)} v_2,v_2 \rangle^{1/2}\cdots \langle \Sigma^{(p)} v_p,v_p \rangle^{1/2} \E \bigg(\frac{1}{N}\sum_{i=1}^{N} (Z_i^{(2)})^{2} \cdots (Z_i^{(p)})^{2} \bigg)^{1/2}\\
    &=\frac{\E \|X^{(1)}\|}{\sqrt{N}}\bigg(\prod_{k=2}^p\|\Sigma^{(k)}\|^{1 / 2}\bigg)\E \bigg(\frac{1}{N}\sum_{i=1}^{N} (Z_i^{(2)})^{2} \cdots (Z_i^{(p)})^{2} \bigg)^{1/2},
\end{align*}
where
\[
Z^{(k)}_i :=\frac{\langle X^{(k)}_i,v_k \rangle}{\langle \Sigma^{(k)} v_k ,v_k\rangle^{1/2}},\quad 1\le i\le N, \ 1\le k\le p,
\]
are i.i.d. standard Gaussian random variables. One can check that
\[
\E\bigg(\frac{1}{N}\sum_{i=1}^{N} (Z_i^{(2)})^{2} \cdots (Z_i^{(p)})^{2} \bigg)^{1/2}\ge c(p)
\]
for a positive constant $c(p)$ depending only on $p$, which implies that
\begin{align*}
    &\E \bigg\|\frac{1}{N} \sum_{i=1}^N X_i^{(1)} \otimes \cdots \otimes X_i^{(p)}-\E\, X^{(1)} \otimes \cdots \otimes X^{(p)}\bigg\|\\
  %  &=\E_{(X^{(k)}_i)_{i=1}^{N}, 2\le k\le p} \left[ \E_{(X^{(1)}_i)_{i=1}^{N}}  \sup_{\|v_2\|= \cdots =\|v_p\|=1}\bigg\| \frac{1}{N}\sum_{i=1}^N \langle X^{(2)}_i,v_2\rangle \cdots \langle X^{(p)}_i, v_p \rangle X_i^{(1)} \bigg\|\right]\\
    &\ge \frac{\E \|X^{(1)}\|}{\sqrt{N}}\bigg(\prod_{k=2}^p\|\Sigma^{(k)}\|^{1 / 2}\bigg)\E \bigg(\frac{1}{N}\sum_{i=1}^{N} (Z_i^{(2)})^{2} \cdots (Z_i^{(p)})^{2} \bigg)^{1/2}\\
    &\gtrsim_p \bigg(\prod_{k=1}^p\|\Sigma^{(k)}\|^{1 / 2}\bigg)\bigg(\frac{r(\Sigma^{(1)})}{N}\bigg)^{1/2}
    \gtrsim_p \bigg(\prod_{k=1}^p\|\Sigma^{(k)}\|^{1 / 2}\bigg)\bigg(\frac{\sum_{k=1}^p r(\Sigma^{(k)})}{N}\bigg)^{1/2},
\end{align*}
where in the last step we used that $r(\Sigma^{(1)})\ge r(\Sigma^{(2)})\ge \cdots\ge r(\Sigma^{(p)})$. This establishes \textbf{Claim I}.

\textbf{Proof of Claim II:}
Suppose that there exists an index $t$ such that 
\[
r(\Sigma^{(1)})\ge \cdots \ge r(\Sigma^{(t)})\ge \log N \ge r(\Sigma^{(t+1)})\ge \cdots \ge r(\Sigma^{(p)}).
\]
Recall that
\begin{align*}
&\E \bigg\|\frac{1}{N} \sum_{i=1}^N X_i^{(1)} \otimes \cdots \otimes X_i^{(p)}-\E\, X^{(1)} \otimes \cdots \otimes X^{(p)}\bigg\|=\E \bigg\|\frac{1}{N} \sum_{i=1}^N X_i^{(1)} \otimes \cdots \otimes X_i^{(p)}\bigg\| \\
&= \E \sup_{\|v_1\|= \cdots =\|v_p\|=1} \bigg| \frac{1}{N}\sum_{i=1}^N \langle X^{(1)}_i,v_1\rangle \cdots \langle X^{(p)}_i, v_p \rangle \bigg|.
\end{align*}

If $t=p$, we have
\begin{align*}
   &\E \sup_{\|v_1\|= \cdots =\|v_p\|=1} \bigg| \frac{1}{N}\sum_{i=1}^N \langle X^{(1)}_i,v_1\rangle \cdots \langle X^{(p)}_i, v_p \rangle \bigg|\\
   &\ge \E_{(X_1^{(k)})_{k=1}^{p}} \sup_{\|v_1\|= \cdots =\|v_p\|=1}  \bigg|\E_{(X_i^{(k)})_{i=2}^{N},1\le k\le p}\frac{1}{N}\sum_{i=1}^N \langle X^{(1)}_i,v_1\rangle \cdots \langle X^{(p)}_i, v_p \rangle \bigg|\\
   &=\E_{(X_1^{(k)})_{k=1}^{p}} \sup_{\|v_1\|= \cdots =\|v_p\|=1}  \bigg| \frac{1}{N} \langle X^{(1)}_1,v_1\rangle \cdots \langle X^{(p)}_1, v_p \rangle \bigg|\\
   &=\frac{1}{N}\E_{(X_1^{(k)})_{k=1}^{p}}\prod_{k=1}^{p}\|X_1^{(k)}\| \overset{\text{($\star$)}}{=} \frac{1}{N}\prod_{k=1}^{p}\E \|X_1^{(k)}\|  \gtrsim_p \frac{1}{N}  \bigg(\prod_{k=1}^p\|\Sigma^{(k)}\|^{1 / 2}\bigg) \prod_{k=1}^{p}r(\Sigma^{(k)})^{1/2},
\end{align*}
as desired. Here $(\star)$ holds since $X^{(1)}_1,\ldots, X^{(p)}_1$ are independent, so $\|X^{(1)}_1\|,\ldots, \|X^{(p)}_1\|$ are independent and the expectation factorizes.

If $1 \le t \le p-1$, then by choosing $v_k=v^{*}_k$ to be the unit eigenvector associated with the largest eigenvalue of $\Sigma^{(k)}$, for each $k \ge t+1$, we obtain 
\begin{align}\label{eq:lowerbound_aux1}
&\E \sup_{\|v_1\|= \cdots =\|v_p\|=1} \bigg| \frac{1}{N}\sum_{i=1}^N \langle X^{(1)}_i,v_1\rangle \cdots \langle X^{(p)}_i, v_p \rangle \bigg|\nonumber\\
    &\ge \E \sup_{\|v_1\|= \cdots =\|v_t\|=1} \bigg| \frac{1}{N}\sum_{i=1}^N \langle X^{(1)}_i,v_1\rangle \cdots \langle X^{(t)}_i, v_t \rangle \langle X^{(t+1)}_i, v^{*}_{t+1} \rangle\cdots \langle X^{(p)}_i, v^{*}_p \rangle\bigg| \nonumber\\
    &= \frac{1}{N}\bigg(\prod_{k=t+1}^p\|\Sigma^{(k)}\|^{1 / 2}\bigg) \E \sup_{\|v_1\|= \cdots =\|v_t\|=1} \bigg| \sum_{i=1}^N \langle X^{(1)}_i,v_1\rangle \cdots \langle X^{(t)}_i, v_t \rangle Z^{(t+1)}_i\cdots Z^{(p)}_i \bigg|,
\end{align}
where \[
Z^{(k)}_i =\frac{\langle X^{(k)}_i,v^*_k \rangle}{\langle \Sigma^{(k)} v^*_k ,v^*_k\rangle^{1/2}}=\frac{\langle X^{(k)}_i,v^*_k \rangle}{\|\Sigma^{(k)}\|^{1/2}},\quad 1\le i\le N, \ 1\le k\le p,
\]
are i.i.d. standard Gaussian random variables.

Observe that
\begin{align}\label{eq:lowerbound_aux2}
    &\E \sup_{\|v_1\|= \cdots =\|v_t\|=1} \bigg| \sum_{i=1}^N \langle X^{(1)}_i,v_1\rangle \cdots \langle X^{(t)}_i, v_t \rangle Z^{(t+1)}_i\cdots Z^{(p)}_i \bigg| \nonumber\\
    &= \E_{(Z^{(k)}_i)_{i=1}^{N},k\ge t+1}\left[\E_{(X^{(k)}_i)_{i=1}^{N},k\le t}  \sup_{\|v_1\|= \cdots =\|v_t\|=1} \bigg| \sum_{i=1}^N \langle X^{(1)}_i,v_1\rangle \cdots \langle X^{(t)}_i, v_t \rangle Z^{(t+1)}_i\cdots Z^{(p)}_i \bigg| \right] \nonumber\\
    &\ge \E_{(Z^{(k)}_i)_{i=1}^{N},k\ge t+1}\left[\max_{1\le j\le N}\E_{(X^{(k)}_j)_{k=1}^{t}} \sup_{\|v_1\|= \cdots =\|v_t\|=1} \bigg|  \E_{(X^{(k)}_i)_{k=1}^{t},i\ne j} \sum_{i=1}^N \langle X^{(1)}_i,v_1\rangle \cdots \langle X^{(t)}_i, v_t \rangle Z^{(t+1)}_i\cdots Z^{(p)}_i \bigg|\right] \nonumber\\
    &=\E_{(Z^{(k)}_i)_{i=1}^{N},k\ge t+1}\left[\max_{1\le j\le N}\E_{(X^{(k)}_j)_{k=1}^{t}} \sup_{\|v_1\|= \cdots =\|v_t\|=1} \bigg| \langle X^{(1)}_j,v_1\rangle \cdots \langle X^{(t)}_j, v_t \rangle Z^{(t+1)}_j\cdots Z^{(p)}_j \bigg|\right] \nonumber\\
    &\ge \E_{(Z^{(k)}_i)_{i=1}^{N},k\ge t+1}\left[\max_{1\le j\le N}\E_{(X^{(k)}_j)_{k=1}^{t}}   \| X^{(1)}_j \| \cdots \| X^{(t)}_j \|
|Z^{(t+1)}_j| \cdots |Z^{(p)}_j| \right] \nonumber\\
&= \E_{(Z^{(k)}_i)_{i=1}^{N},k\ge t+1}\left[\max_{1\le j\le N} (\E \| X^{(1)} \|) \cdots (\E \| X^{(t)} \|)
|Z^{(t+1)}_j| \cdots |Z^{(p)}_j| \right] \nonumber\\
&= \bigg(\prod_{k=1}^{t } \E \| X^{(k)} \|\bigg)\E_{(Z^{(k)}_i)_{i=1}^{N},k\ge t+1} \left[\max_{1\le j\le N} |Z^{(t+1)}_j| \cdots |Z^{(p)}_j| \right] \nonumber\\
&\asymp_p \bigg(\prod_{k=1}^{t } \|\Sigma^{(k)}\|^{1/2}\bigg)\bigg(\prod_{k=1}^{t } r(\Sigma^{(k)})^{1/2}\bigg) (\log N)^{(p-t)/2},
\end{align}
where in the last step we use
Lemma \ref{lemma:sup_Gaussian_products} from Appendix \ref{app:A}, together with the fact that the random variables $Z^{(k)}_i$, for $1\le i\le N$ and $t+1\le k\le p$, are i.i.d. standard Gaussian. Combining \eqref{eq:lowerbound_aux1} and \eqref{eq:lowerbound_aux2} completes the proof of \textbf{Claim II}.   
\end{proof}

\begin{remark}[Discussion on Proposition \ref{prop:lowerbound}]\label{rem:lower_bound}
The conditioning idea used in the proof of \emph{\textbf{Claim I}} can be viewed as a multi-product analogue of the lower-bound argument in \cite[Theorem~4]{koltchinskii2017concentration}, where it is shown that for i.i.d.\ centered Gaussian random variables $X,X_1,\ldots,X_N$ with covariance operator $\Sigma$,
\[
\E \bigg\|\frac{1}{N}\sum_{i=1}^{N} X_i \otimes X_i- \E\, X \otimes X\bigg\|
\;\gtrsim\;
\|\Sigma\|\sqrt{\frac{r(\Sigma)}{N}}.
\]
In our multi-product setting, the key difference is that the tensor factors may correspond to different random variables. We therefore begin by focusing on the component $X^{(1)}$ whose covariance has the largest effective rank. Under independence across components, the argument reduces to a clean conditioning step that isolates the contribution of this dominant factor.

The proof of \emph{\textbf{Claim II}} is where the additional logarithmic factors in the asymmetric case arise. The idea is to choose test vectors as follows: for those components whose covariance effective ranks are smaller than $\log N$, we take the unit eigenvector corresponding to the largest eigenvalue and denote the resulting normalized scalar variables by $(Z_i^{(k)})_{i=1}^{N}$ for $ k\ge t+1$. Conditioning on $\{Z_i^{(k)}:1\le i\le N,k\ge t+1\}$, we can move the expectation inside the absolute value to obtain a lower bound, and, crucially, eliminate all but one of the $N$ summands. More precisely, we essentially keep only the index $j$ corresponding to the largest fluctuation of
\[
|Z_j^{(t+1)}|\cdots |Z_j^{(p)}|.
\]
Since there are  $N$ i.i.d. copies and the components are independent, this maximum is typically of order $(\log N)^{(p-t)/2}$, yielding the sharp logarithmic contribution. In this way, the factors with larger effective ranks contribute through their effective-rank terms, while the smaller-rank factors contribute through the maximal fluctuation across the $N$ i.i.d. draws. By contrast, a direct approach that simply moves expectation into the absolute value without this selection would only yield a lower bound of order
\[
\frac{1}{N}\bigg(\prod_{k=1}^p\|\Sigma^{(k)}\|^{1/2}\bigg)\prod_{k=1}^{p} r(\Sigma^{(k)})^{1/2},
\]
which corresponds to the ``no-log" case $t=p$. The above extremal-selection observation is instrumental to obtain the sharper lower bound that matches our upper bound.

Finally, we emphasize that we assume independence across tensor components only to provide a clean special case in which our general upper bound is provably tight, without imposing any additional structural assumptions on the correlations between components. In particular, the lower-bound mechanism in the proof makes clear that there are many settings beyond full independence in which the same kind of extremal-fluctuation phenomenon can occur, and hence where the resulting logarithmic factors may still appear and be unavoidable.
\end{remark}

\section{Multi-product empirical processes}\label{sec:multi}

This section studies multi-product empirical processes indexed by $p$ possibly different function classes $(\F^{(k)})_{k=1}^{p}$, evaluated at $p$ possibly different random variables $(X^{(k)})_{k=1}^{p}$:
\begin{align}\label{eq:multi-product-processes}
(f^{(1)},\ldots,f^{(p)})\mapsto \frac{1}{N}\sum_{i=1}^N \prod_{k=1}^p f^{(k)}(X^{(k)}_i)-\E \prod_{k=1}^p f^{(k)}(X^{(k)}),\qquad f^{(k)}\in \F^{(k)}, \ 1\le k\le p.
\end{align}
We leverage generic chaining techniques to derive sharp upper bounds on their suprema in terms of quantities that reflect the complexity of the function classes $(\F^{(k)})_{k=1}^{p}$.

Our theory builds on the framework introduced by Mendelson in \cite{mendelson2016upper}. The central idea is as follows. First, we gather precise structural information on the typical coordinate projection of $\F^{(k)}, 1\le k\le p$; that is, for $\sigma^{(k)}=(X^{(k)}_1,\ldots,X^{(k)}_N), 1\le k\le p$, we study the structure of
\[
P_{\sigma^{(k)}}(\F^{(k)})=\left\{(f^{(k)}(X^{(k)}_i))_{i=1}^{N}: f^{(k)}\in \F^{(k)}\right\},\quad 1 \le k\le p.
\]
In order to study multi-product processes with $p\ge 3$, it is further necessary to understand the structure of
\[
\bigg(\prod_{k'\ne k} f^{(k')}(X_i^{(k')})\bigg)_{i=1}^{N},\quad 1\le k\le p,
\]
a structural component that appears to play a central role in the multi-product setting considered here. Second, for typical $\sigma^{(k)}=(X^{(k)}_1,\ldots,X^{(k)}_N), 1\le k\le p$, we analyze the suprema of the conditioned Bernoulli process:
\[
(f^{(1)},\ldots,f^{(p)}) \mapsto \sum_{i=1}^N \varepsilon_i \prod_{k=1}^{p}f^{(k)}(X^{(k)}_i)  \  \Big | \sigma^{(1)},\ldots ,\sigma^{(p)},
\]
where $\varepsilon_1,\ldots,\varepsilon_N$ are independent symmetric Bernoulli random variables (that is, $\P(\varepsilon_n=+1)=\P(\varepsilon_n=-1)=1/2$).

The structure of typical coordinate projections of sub-Gaussian function classes has been extensively studied in empirical process theory \cite{mendelson2010empirical, mendelson2011discrepancy,mendelson2012generic,mendelson2016dvoretzky,mendelson2016upper}, primarily in the context of quadratic and product empirical processes. Here, we extend the analysis to empirical processes indexed by $p$ different function classes evaluated at $p$ different random variables. A key ingredient of our approach is Theorem \ref{thm:key_lemma}, which establishes a high-probability upper bound on the supremum of the $\ell_2$-norm of the product of coordinate projection vectors of the function classes $(\F^{(k)})_{k=1}^{p}$. Specifically, it controls
\[
\sup_{f^{(k)}\in \F^{(k)},1\le k\le p} \bigg(\sum_{i=1}^{N}\prod_{k=1}^{p} \left(f^{(k)}(X_i^{(k)})\right)^2\bigg)^{1/2}
\]
in terms of quantities that capture the geometric complexity of $(\F^{(k)})_{k=1}^{p}$. A novel phenomenon captured by Theorem \ref{thm:key_lemma} is the emergence of logarithmic factors in the multi-product setting; these factors do not appear in the analysis of quadratic or standard product empirical processes.

The rest of this section is organized as follows. Subsection \ref{subsec:background} introduces some definitions from Mendelson's work \cite{mendelson2016upper} and discusses an important result, Lemma \ref{lemma:mendelson} (\cite[Corollary 3.6]{mendelson2016upper}), which plays a crucial role in our proof of Theorem \ref{thm:main2}. We then prove Theorem \ref{thm:main2} in Subsection \ref{subsec:main2}. Finally, Subsection \ref{subsec:prooflemma} contains the proof of Theorem \ref{thm:key_lemma} and Subsection \ref{subsec:log-concave} discusses how to extend our theory to log-concave ensembles. 

\subsection{Background and auxiliary results}\label{subsec:background}
Following \cite{mendelson2016upper}, we will utilize complexity parameters that take into account all the $L_p$ structures endowed by the process. We recall that, for $q \geq 1,$ the graded $L_q$-norm is defined by
\[
\|f\|_{(q)} :=\sup _{1 \leq p \leq q} \frac{\|f\|_{L_p}}{\sqrt{p}}.
\]
We will use repeatedly that, for any $q \ge 2,$ $\|f \|_{L_2} \lesssim \|f\|_{(q)} \lesssim \|f\|_{\psi_2}.$
As shown in \cite{mendelson2016upper}, the graded $\gamma$-type functionals that we now define serve as key parameters to characterize the complexity of the typical coordinate projection of a function class.

\begin{definition}[Graded $\gamma$-type functionals]
Given a class of functions $\F$ on a probability space $(\Omega, \mu)$, constants $u \geq 1$ and $s_0 \geq 0$, set
\[
\Lambda_{s_0, u}(\F) :=\inf \sup _{f \in \F} \sum_{s \geq s_0} 2^{s / 2}\left\|f-\pi_s f\right\|_{(u^2 2^s)},
\]
where the infimum is taken over all admissible sequences, and $\pi_s f$ is the nearest point in $\F_s$ to $f$ with respect to the $(u^2 2^s)$-norm. Also set
\[
\widetilde{\Lambda}_{s_0, u}(\F) :=\Lambda_{s_0, u}(\F)+2^{s_0 / 2} \sup_{f \in \F}\left\|\pi_{s_0} f\right\|_{(u^2 2^{s_0})} .
\]
\end{definition} 

 The following lemma shows that $\Lambda_{s_0, u}(\F)$ is upper bounded by $\gamma(\F,\psi_2)$ up to a universal constant.
\begin{lemma}[{\cite[Lemma 4.1]{al2025sharp}}]\label{lemma:Lambda_Gamma}
    For any $u\ge 1$ and $s_0\ge 0$, $\Lambda_{s_0,u}(\F)\lesssim \gamma(\F,\psi_2)$.
\end{lemma}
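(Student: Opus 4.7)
The plan is a straightforward term-by-term comparison of the two sums, exploiting that the Orlicz $\psi_2$-norm dominates every graded $L_q$-norm up to a universal constant. Indeed, the paper already records that $\|g\|_{(q)} \lesssim \|g\|_{\psi_2}$ for every $q\ge 2$, which is an immediate consequence of the standard sub-Gaussian moment bound $\|g\|_{L_p}\lesssim \sqrt{p}\,\|g\|_{\psi_2}$; the same estimate holds for $q\in[1,2)$ by the identical argument, with an absolute constant independent of $q$. This single inequality is the entire analytic content of the lemma.

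With this in hand, I would fix an arbitrary admissible sequence $(\F_s)_{s\ge 0}$ of $\F$. For each $f\in\F$ and each $s\ge s_0$, let $\pi_s f$ be a nearest point in $\F_s$ in the $(u^2 2^s)$-norm, as used in the definition of $\Lambda_{s_0,u}$, and let $\pi_s^{\psi_2}f$ be a nearest point in $\F_s$ in the $\psi_2$-norm; both minima are attained since $\F_s$ is finite. Because $\pi_s f$ minimizes the $(u^2 2^s)$-distance over $\F_s$, we may replace it by the (generally suboptimal) competitor $\pi_s^{\psi_2}f$ and only then convert norms:
\[
\|f-\pi_s f\|_{(u^2 2^s)} \;\le\; \|f-\pi_s^{\psi_2}f\|_{(u^2 2^s)} \;\lesssim\; \|f-\pi_s^{\psi_2}f\|_{\psi_2} \;=\; d_{\psi_2}(f,\F_s).
\]
Multiplying by $2^{s/2}$, summing over $s\ge s_0$, taking the supremum over $f\in\F$, and finally taking the infimum over admissible sequences of $\F$ yields
\[
\Lambda_{s_0,u}(\F) \;\lesssim\; \inf\,\sup_{f\in\F}\sum_{s\ge 0} 2^{s/2}\, d_{\psi_2}(f,\F_s) \;=\; \gamma(\F,\psi_2),
\]
which is the claim.

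There is no substantive technical obstacle. The only point requiring care is to keep the two projections $\pi_s f$ and $\pi_s^{\psi_2}f$ notationally distinct: the minimizing property of $\pi_s f$ is invoked first, inside the $(u^2 2^s)$-norm, and only afterwards is the $(u^2 2^s)$-norm of the residual estimated by its $\psi_2$-norm. Because the norm-comparison constant is absolute and the weights $2^{s/2}$ and the summation range $s\ge s_0$ are handled identically on both sides, the resulting implicit constant in the lemma is independent of both $u$ and $s_0$.
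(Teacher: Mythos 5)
Your proof is correct and takes the obvious (and essentially the only natural) route: pick, for a given admissible sequence, the $\psi_2$-nearest competitor $\pi_s^{\psi_2}f$ in $\F_s$, use optimality of $\pi_s f$ in the $(u^2 2^s)$-metric to replace it by this competitor, and then invoke the uniform-in-$q$ comparison $\|\cdot\|_{(q)}\lesssim\|\cdot\|_{\psi_2}$ (a direct consequence of $\|g\|_{L_p}\lesssim\sqrt p\,\|g\|_{\psi_2}$ for all $p\ge 1$); the truncation of the sum to $s\ge s_0$ is monotone, and passing to the infimum over admissible sequences finishes. This is the same argument as in the cited source, so there is nothing further to add.
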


To analyze (conditioned) Bernoulli processes, we will use a chaining argument combined with the following H\"offding’s inequality.

\begin{lemma}[H\"offding's inequality, \cite{montgomery1990distribution}]\label{lemma:Hoffding} Let $\varepsilon_1,\ldots,\varepsilon_N$ be independent symmetric Bernoulli random variables. For any fixed $I \subset\{1, \ldots, N\}$ and $z \in \mathbb{R}^N$, it holds with probability at least $1-2 \exp (-t^2 / 2 )$ that
\[
\bigg|\sum_{i=1}^N \varepsilon_i z_i\bigg| \le \sum_{i \in I}\left|z_i\right|+t\bigg(\sum_{i \in I^c} z_i^2\bigg)^{1 / 2}.
\]
In particular, for every $1 \le k \le N,$ it holds with probability at least $1-2 \exp (-t^2 / 2 )$ that
\[
\bigg|\sum_{i=1}^N \varepsilon_i z_i\bigg| 
\le \sum_{i=1}^k \left|z_i^*\right|+t\bigg(\sum_{i=k+1}^N (z_i^*)^2\bigg)^{1 / 2},
\]
where $\left(z_i^*\right)_{i=1}^N$ denotes the non-increasing rearrangement of $\left(\left|z_i\right|\right)_{i=1}^N.$ This estimate is optimal when $k\asymp t^2$.
\end{lemma}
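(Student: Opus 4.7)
The plan is to decompose the sum $\sum_{i=1}^N \varepsilon_i z_i$ according to the partition $\{1,\ldots,N\} = I \cup I^c$ and bound the two pieces by entirely different arguments. For the part indexed by $I$, I would use the deterministic bound $|\sum_{i\in I}\varepsilon_i z_i| \le \sum_{i\in I}|z_i|$, which is immediate from the triangle inequality together with $|\varepsilon_i|=1$. All the probabilistic content is then pushed onto the complement.

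For the part over $I^c$, I would invoke the standard Höffding (Rademacher) inequality: the summands $(\varepsilon_i z_i)_{i\in I^c}$ are independent, mean zero, and each satisfies the sub-Gaussian moment generating function bound $\E\exp(\lambda \varepsilon_i z_i) \le \exp(\lambda^2 z_i^2/2)$. A Chernoff argument with optimized $\lambda$ then yields
\[
\P\Bigl(\Bigl|\sum_{i\in I^c}\varepsilon_i z_i\Bigr| > t\Bigl(\sum_{i\in I^c}z_i^2\Bigr)^{1/2}\Bigr) \le 2\exp(-t^2/2).
\]
Combining this with the deterministic bound over $I$ via the triangle inequality gives the first claim on the event described.

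For the ``in particular'' statement, I would specialize $I$ to be the set of indices corresponding to the $k$ largest values of $(|z_i|)_{i=1}^N$; then by definition of the non-increasing rearrangement, $\sum_{i\in I}|z_i| = \sum_{i=1}^k z_i^*$ and $\sum_{i\in I^c}z_i^2 = \sum_{i=k+1}^N (z_i^*)^2$, reproducing the stated form. There is no genuine obstacle here, since the result is classical; the only point that merits comment is the optimality remark at $k \asymp t^2$. This reflects the standard peeling balance: for non-increasing sequences the two terms $\sum_{i\le k}z_i^*$ and $t(\sum_{i>k}(z_i^*)^2)^{1/2}$ become of the same order precisely around $k \asymp t^2$, so further truncation cannot improve the bound in an order sense. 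This scale is exactly what makes the lemma the right ingredient for the chaining estimates used later in the proof of Theorem~\ref{thm:main2}.
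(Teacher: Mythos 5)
The paper does not prove Lemma~\ref{lemma:Hoffding}; it simply cites \cite{montgomery1990distribution}, so there is no in-paper argument to compare against. Your proof is correct and is the standard one: the deterministic bound $\bigl|\sum_{i\in I}\varepsilon_i z_i\bigr|\le\sum_{i\in I}|z_i|$ uses only $|\varepsilon_i|=1$, the $I^c$ piece is controlled by the sub-Gaussian MGF bound $\E\exp(\lambda\varepsilon_i z_i)=\cosh(\lambda z_i)\le\exp(\lambda^2 z_i^2/2)$ followed by Chernoff with $\lambda$ optimized, and the triangle inequality combines them on the high-probability event. Specializing $I$ to the indices of the $k$ largest entries of $(|z_i|)$ immediately gives the rearranged form. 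The informal remark on optimality at $k\asymp t^2$ is also the right heuristic and matches how the bound is used downstream in the chaining arguments. No gaps.
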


According to Lemma \ref{lemma:Hoffding}, the influence of $z$ on the Bernoulli process depends on the $\ell_1$-norm of the largest $k$ coordinates of $(|z_i|)_{i=1}^{N}$ and the $\ell_2$-norm of the smallest $N-k$ coordinates. As noted in \cite{mendelson2016upper}, the graded $L_q$-norm plays a key role in analyzing the monotone non-increasing rearrangement of $N$ independent copies of a random variable. In particular, \cite[Corollary 3.6]{mendelson2016upper} provides a useful way to decompose a collection of random vectors into their largest and smallest coordinate components, a technique central to \cite{mendelson2016upper}. Below, we present the statement obtained by setting $q=2r, \beta=1/2$, and replacing $s$ with $s+\lceil \log_2 p \rceil$ in their notation.

\begin{lemma}[{\cite[Corollary 3.6]{mendelson2016upper}}]\label{lemma:mendelson}

There exist absolute constants $c_0, c_1$ and for every $r \ge 1$ there exist constants $c_2, c_3, c_4$ and $c_5$ that depend only on $r$ for which the following holds. Set
\begin{align}\label{eq:j_s}
j_s =\min \left\{\left\lceil
\frac{c_0 u^2 p 2^s}{\log \left(4+e N / u^2 p2^s\right)}\right\rceil, N+1\right\}
\end{align}
for $u \geq c_2$. Let $\mathcal{H} \subset L_{2r}$  be of cardinality at most $2^{p2^{s+2}}$. For any $h\in \mathcal{H}$, the random vector $\left(h\left(X_i\right)\right)_{i=1}^N$ can be decomposed into a sum $U_h+V_h$, where the random vectors $U_h,V_h \in \R^N$ have disjoint supports. Namely, $U_h$ is supported on the largest $j_s-1$ coordinates of $\left(\left|h\left(X_i\right)\right|\right)_{i=1}^N$, while $V_h$ is supported on the complement. Then, there is an event of probability at least $1-2 \exp \left(-c_3 u^2 p 2^s\right)$ on which, for every $h \in \mathcal{H}$,
$$
\left\|U_h\right\|_{\ell_2^N} \leq c_4 u p^{1/2}2^{s / 2}\|h\|_{(c_5 u^2 p 2^s)}, \quad  
\quad\left\|V_h\right\|_{\ell_r^N} \leq c_1 \|h\|_{L_{2r}} N^{1 / r}.
$$
\end{lemma}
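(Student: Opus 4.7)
The plan is to fix $h \in \mathcal{H}$, establish both norm bounds pointwise with failure probability at most $\exp(-c u^2 p 2^s)$ for some absolute constant $c$, and then union bound over the at most $2^{p 2^{s+2}}$ elements of $\mathcal{H}$; taking $c_2$ large enough guarantees that $c u^2$ dominates the $4\log 2$ coming from the union bound, closing the argument. Writing $(h^*_i)_{i=1}^N$ for the non-increasing rearrangement of $(|h(X_i)|)_{i=1}^N$, we have $\|U_h\|_{\ell_2^N}^2 = \sum_{i=1}^{j_s-1}(h^*_i)^2$ and $\|V_h\|_{\ell_r^N}^r \le \sum_{i=1}^N |h(X_i)|^r$, so both claims reduce to concentration statements for these quantities that feature $\|h\|_{L_{2r}}$ and the graded norm $\|h\|_{(c_5 u^2 p 2^s)}$, respectively.

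For the $\ell_r^N$ bound on $V_h$, I would apply Bernstein's inequality to the i.i.d.\ sum $\sum_i |h(X_i)|^r$ after truncating $|h(X_i)|^r$ at a suitable quantile. The sum concentrates around $N\,\E|h(X)|^r \le N\|h\|_{L_{2r}}^r$ by Jensen's inequality, and both the variance proxy and the truncation level admit clean estimates in terms of $\|h\|_{L_{2r}}$ (using $\|h^r\|_{L_2} = \|h\|_{L_{2r}}^r$), giving $\sum_i |h(X_i)|^r \lesssim_r N\|h\|_{L_{2r}}^r$ on the desired exponential event.

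For the $\ell_2^N$ bound on $U_h$, the central step is the quantile estimate
\[
h^*_i \;\lesssim\; \sqrt{\log(eN/i)}\,\|h\|_{(c_5 u^2 p 2^s)}, \qquad 1 \le i \le j_s-1,
\]
valid with high probability. This is derived from the Markov-type inequality $\P(|h(X)| > t) \le \|h\|_{L_q}^q / t^q$ evaluated at $q = \log(eN/i)$ and $t = C\sqrt{q}\,\|h\|_{(q)}$: a Chernoff bound on the number of exceedances pins $h^*_i$ to this level, and a dyadic union over the indices $i$ makes the estimate simultaneous. Summing and using $\sum_{i=1}^{j_s-1}\log(eN/i) \asymp j_s \log(eN/j_s)$ yields $\|U_h\|_{\ell_2^N}^2 \lesssim j_s \log(eN/j_s)\,\|h\|_{(c_5 u^2 p 2^s)}^2$. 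The key algebraic identity driving the lemma is that the logarithmic correction built into the definition of $j_s$ is exactly what forces $j_s \log(eN/j_s) \asymp u^2 p 2^s$, so the target bound $c_4^2 u^2 p 2^s \|h\|_{(c_5 u^2 p 2^s)}^2$ matches on the nose.

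The main obstacle is making the quantile estimate for $h^*_i$ uniform across $1 \le i \le j_s - 1$ and $h \in \mathcal{H}$ simultaneously at probability $1 - \exp(-c_3 u^2 p 2^s)$: the optimal moment parameter $q = \log(eN/i)$ wants to vary with $i$, yet a single graded norm $\|h\|_{(c_5 u^2 p 2^s)}$ must serve as a uniform ceiling on all the $\|h\|_{L_q}$ invoked. The boundary regime $j_s = N+1$, occurring when $u^2 p 2^s \gtrsim N$, is handled separately: there $V_h \equiv 0$ and the $U_h$ bound reduces to a direct Bernstein control of $\sum_i |h(X_i)|^2$, which is at most $u^2 p 2^s \|h\|_{(c_5 u^2 p 2^s)}^2$ since $N \lesssim u^2 p 2^s$ in this range.
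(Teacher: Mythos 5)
This lemma is not proved in the paper; it is cited verbatim (with a parameter substitution) from Mendelson's Corollary~3.6 in \emph{Upper bounds on product and multiplier empirical processes}, so your task is really to reconstruct that proof. Your outline for $U_h$ is in the right spirit: the binomial/Chernoff control of exceedance counts, the choice $q \asymp \log(eN/i)$ in the graded norm, the dyadic union over $i$, and the identity $j_s \log(eN/j_s) \asymp u^2 p 2^s$ are exactly the ingredients Mendelson uses, and his Lemma~3.2 (quoted in the paper as Lemma~4.5) is precisely such an order-statistics estimate.

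Your treatment of $V_h$, however, has a gap. You propose bounding $\|V_h\|_{\ell_r^N}^r \le \sum_i |h(X_i)|^r$ and applying Bernstein after truncation. But we only assume $h \in L_{2r}$, so $|h(X)|^r$ has just a finite \emph{second} moment; this gives no exponential concentration for $\sum_i |h(X_i)|^r$, and Chebyshev yields only a polynomially small failure probability, far short of the required $\exp(-c_3 u^2 p 2^s)$. Truncating at a deterministic level $M$ makes the truncated sum concentrate, but then you must control $\sum_i (|h(X_i)|^r - M)_+$, which is again a tail problem with only $L_2$ integrability. If you instead truncate at a random quantile like $h^*_{j_s}$, Bernstein no longer applies because the truncation level is data-dependent. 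The actual proof avoids this by \emph{not} discarding the structure of $V_h$: since $V_h$ excludes the $j_s - 1$ largest coordinates, one applies the order-statistics bound $h^*_i \lesssim \|h\|_{L_{2r}}(eN/i)^{3/(4r)}$ uniformly for $i \ge j_s$ (the same Chernoff-on-exceedances device you use for $U_h$, just with a different moment parameter), and then sums: $\sum_{i\ge j_s}(h^*_i)^r \lesssim \|h\|_{L_{2r}}^r N^{3/4}\sum_i i^{-3/4} \lesssim \|h\|_{L_{2r}}^r N$. The reason $V_h$ admits an exponential-probability bound at all is precisely that the large coordinates have been removed; a generic Bernstein bound on the full sum cannot see this and would fail. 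Rewriting the $V_h$ step to run through the same quantile machinery as $U_h$, with exponent $q = 2r$ rather than $q \asymp \log(eN/i)$, closes the gap and matches Mendelson's argument.
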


For $1\le k\le p$, let $(\F^{(k)}_s)_{s \geq 0}$ be an admissible sequence of $\F^{(k)}$. For $f^{(k)}\in \F^{(k)}$ and $s\ge 0$, we define $\Delta_s f^{(k)} := \pi_{s+1}f^{(k)}-\pi_sf^{(k)}$ and $(\Delta_s f^{(k)})_i := (\Delta_s f^{(k)})(X^{(k)}_i)$ for $1\le i\le N$. To analyze the multi-product empirical process in \eqref{eq:multi-product-processes}, we will apply Lemma \ref{lemma:mendelson} to the classes $\F^{(k)}_s$, $\{\Delta_s f^{(k)}:f^{(k)}\in \F^{(k)}\}$ for each $k$, as well as to the classes consisting of products of coordinate projection vectors, with specifically chosen values of $r$, leading to the following corollary.

\begin{corollary}\label{coro:1}

Let $s_0$ be a fixed index and let $(j_s)_{s\ge s_0}$ be the sequence defined in \eqref{eq:j_s}.   There are constants $c_1, c_2, c_3$ and $c_4$ that depend only on $p,$ and for $u\ge c_1$ an event of probability at least $1-2 \exp \left(-c_2 u^2 2^{s_0}\right)$ on which the following holds. For every $1\le k\le p$, every $f^{(k)} \in \F^{(k)}$, and every $s\ge s_0$,
\begin{align*}
\bigg(\sum_{i<j_{s}}\left(\left(\Delta_{s} f^{(k)}\right)_i^*\right)^2\bigg)^{1 / 2}
\le c_3 u 2^{s / 2} \|\Delta_s f^{(k)}\|_{(c_4 u^2 2^s)}, & \quad  \bigg(\sum_{i\ge j_{s}}\left(\left(\Delta_{s} f^{(k)}\right)_i^*\right)^4\bigg)^{1 / 4} 
\le c_3 N^{1 / 4} \|\Delta_s f^{(k)}\|_{L_{8}},\\
\bigg(\sum_{i<j_{s}}\left(\left(\pi_{s} f^{(k)}\right)_i^*\right)^2\bigg)^{1 / 2}\le c_3 u 2^{s / 2}\|\pi_s f^{(k)}\|_{(c_4 u^2 2^s)},& \quad
\bigg(\sum_{i\ge j_{s}}\left(\left(\pi_{s} f^{(k)}\right)_i^*\right)^2\bigg)^{1 / 2}\le c_3 N^{1/2}\|\pi_{s}f^{(k)}\|_{L_4},
\\
\Bigg( \sum_{i\ge j_s}  \bigg(\bigg( \prod_{k'<k}\pi_{s+1} f^{(k')}\prod_{k'>k}\pi_{s} f^{(k')} \bigg)^*_{i}\bigg)^4\Bigg)^{1/4} &  \le c_3 N^{1/4} \bigg\|\prod_{k'<k}\pi_{s+1} f^{(k')}\prod_{k'>k}\pi_{s} f^{(k')}\bigg\|_{L_8},
\\
\Bigg( \sum_{i\ge j_s}  \bigg(\bigg( \prod_{k=1}^{p}\pi_{s+1} f^{(k)} \bigg)^*_{i}\bigg)^2\Bigg)^{1/2} &  \le c_3 N^{1/2} \bigg\|\prod_{k=1}^{p}\pi_{s+1} f^{(k)}\bigg\|_{L_4}.
\end{align*}

\end{corollary}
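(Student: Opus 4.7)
The plan is to derive each of the five inequalities from a separate application of Lemma \ref{lemma:mendelson} to a suitably chosen class $\mathcal{H}\subset L_{2r}$, and then to merge the resulting events by a union bound over $s\ge s_0$ and over the (finitely many, depending only on $p$) classes employed at each scale.

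For the first pair of inequalities, at each scale $s\ge s_0$ and each $1\le k\le p$, I would apply Lemma \ref{lemma:mendelson} with $r=4$ to $\{\Delta_s f^{(k)} : f^{(k)}\in\F^{(k)}\}$. Since $\pi_s f^{(k)}$ and $\pi_{s+1} f^{(k)}$ range over $\F^{(k)}_s$ and $\F^{(k)}_{s+1}$, the cardinality of this class is at most $2^{2^s}\cdot 2^{2^{s+1}}\le 2^{2^{s+2}}\le 2^{p\cdot 2^{s+2}}$, so the hypothesis of Lemma \ref{lemma:mendelson} is met, and the two resulting bounds on $\|U_h\|_{\ell_2^N}$ and $\|V_h\|_{\ell_4^N}$ yield precisely the first line (the factor $p^{1/2}$ and the graded-norm index $c_5 u^2 p\,2^s$ from Lemma \ref{lemma:mendelson} being absorbed into the $p$-dependent constants $c_3$ and $c_4$, using monotonicity of $\|\cdot\|_{(q)}$ in $q$). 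For the second pair I would repeat the argument with $r=2$ and the class $\{\pi_s f^{(k)} : f^{(k)}\in\F^{(k)}\}$, whose cardinality is at most $2^{2^s}$.

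For the third inequality, for each $1\le k\le p$ I would apply Lemma \ref{lemma:mendelson} with $r=4$ to the product class
\[
\bigg\{\prod_{k'<k}\pi_{s+1} f^{(k')}\prod_{k'>k}\pi_{s} f^{(k')} : f^{(k')}\in\F^{(k')},\ k'\ne k\bigg\},
\]
whose cardinality is bounded by $\prod_{k'<k}2^{2^{s+1}}\cdot\prod_{k'>k}2^{2^{s}}\le 2^{(p-1)\cdot 2^{s+1}}\le 2^{p\cdot 2^{s+2}}$. For the fourth inequality I would apply the lemma with $r=2$ to $\{\prod_{k=1}^{p}\pi_{s+1} f^{(k)} : f^{(k)}\in\F^{(k)}\}$, of cardinality at most $2^{p\cdot 2^{s+1}}\le 2^{p\cdot 2^{s+2}}$. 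Only the $\|V_h\|_{\ell_r^N}$ bound of Lemma \ref{lemma:mendelson} is needed here, producing the stated $L_8$ and $L_4$ norms respectively.

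Each application above holds on an event of probability at least $1-2\exp(-c u^2 p\cdot 2^s)$ with $c$ an absolute constant (since $r\in\{2,4\}$ is fixed). Taking a union bound over $s\ge s_0$, over $1\le k\le p$, and over the $O(p)$ classes considered at each scale yields a total failure probability bounded by a geometric series in $2^s$, summing to an expression of the form $2\exp(-c_2 u^2\cdot 2^{s_0})$ with $c_2$ depending only on $p$. On the complementary event, all five inequalities hold simultaneously for every $s\ge s_0$, $1\le k\le p$, and $f^{(k)}\in\F^{(k)}$. The main (and essentially only) obstacle is the bookkeeping: tracking the mixture of projection levels $s$ and $s+1$ inside the product classes, and verifying the cardinality bound $|\mathcal{H}|\le 2^{p\cdot 2^{s+2}}$ required by Lemma \ref{lemma:mendelson} in each of the four cases. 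No new probabilistic input is needed beyond Lemma \ref{lemma:mendelson} itself.
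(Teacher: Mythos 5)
Your proposal is correct and follows essentially the same route as the paper's proof: each of the six inequalities is obtained by applying Lemma~\ref{lemma:mendelson} with the appropriate $r\in\{2,4\}$ to one of four classes (the increments $\{\Delta_s f^{(k)}\}$, the nets $\F^{(k)}_s$, the mixed product classes $\{\prod_{k'<k}\pi_{s+1}f^{(k')}\prod_{k'>k}\pi_s f^{(k')}\}$, and $\{\prod_{k=1}^p\pi_{s+1}f^{(k)}\}$), using exactly the cardinality bounds you state, and then taking a union bound over $s\ge s_0$ and the $O(p)$ classes per scale. The only cosmetic difference is that you label what the statement lists as the fifth and sixth displays as your ``third'' and ``fourth'' inequalities, but the underlying argument is identical.
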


\begin{proof}[Proof of Corollary \ref{coro:1}]
 By the definition of an admissible sequence, we have $|\F^{(k)}_s| \le 2^{2^s}$ for every $1\le k\le p$. Furthermore, $|\F^{(k)}_s|\cdot |\F^{(k)}_{s+1}| \le 2^{2^s} \cdot 2^{2^{s+1}}\le 2^{2^{s+2}},$ which implies that for $\Delta_s f^{(k)} = \pi_{s+1}f^{(k)}-\pi_s f^{(k)}$, the cardinality of the class satisfies $\big|\{\Delta_s f^{(k)}: f^{(k)} \in \F^{(k)} \}\big| \le 2^{2^{s+2}}< 2^{p2^{s+2}}$. Therefore, the first two inequalities follow by applying Lemma~\ref{lemma:mendelson} to the class $\{\Delta_s f^{(k)}:f^{(k)}\in \F^{(k)}\}$ for all $1\le k\le p$ and $s\ge s_0$, with the choice $r=4$. 
 
 Similarly, the third and fourth inequalities follow by applying Lemma~\ref{lemma:mendelson} to the class $\F^{(k)}_s$ for all $1\le k\le p, s\ge s_0$, with $r=2$. The fifth inequality follows by applying Lemma~\ref{lemma:mendelson} to the class $\left\{\prod_{k'<k}\pi_{s+1} f^{(k')}\prod_{k'>k}\pi_{s} f^{(k')}:f^{(k')}\in \F^{(k')}\right\}$ for all $1\le k\le p, s\ge s_0$, with $r=4$, and using the bound
 \[
 \bigg|\bigg\{\prod_{k'<k}\pi_{s+1} f^{(k')}\prod_{k'>k}\pi_{s} f^{(k')}:f^{(k')}\in \F^{(k')}\bigg\}\bigg|\le \prod_{k'<k} 2^{2^{s+1}} \prod_{k'>k} 2^{2^s}\le 2^{(p-1) 2^{s+1}}<2^{p2^{s+2}}.
 \]
 The last inequality is obtained by applying Lemma~\ref{lemma:mendelson} to the class $\left\{\prod_{k=1}^{p}\pi_{s+1}f^{(k)}:f^{(k)}\in \F^{(k)}\right\}$ for all $s\ge s_0$, with $r=2$, and using the fact that
 \[
\bigg| \bigg\{\prod_{k=1}^{p} \pi_{s+1} f^{(k)}:f^{(k)}\in \F^{(k)} \bigg\} \bigg| \le \prod_{k=1}^{p} 2^{2^{s+1}}=2^{p 2^{s+1}}<2^{p 2^{s+2}}.
 \]
 
 Finally, applying a union bound over all $1\le k\le p, s\ge s_0$, we obtain that all the above inequalities simultaneously hold with probability at least
 \[
 1-4p\sum_{s\ge s_0 }2\exp(-c u^2 p 2^{s})\cdot 2^{p 2^{s+2}}\ge 1-2\exp(-c_2 u^2 2^{s_0}),
 \]
provided that $u\ge c_1$, where $c_1$ and $c_2$ are constants depending only on $p$.
\end{proof}

\subsection{Proof of Theorem \ref{thm:main2}}\label{subsec:main2}
This subsection contains the proof of our second main result, Theorem \ref{thm:main2}. The proof relies on the next result, Theorem \ref{thm:key_lemma}, which establishes a high-probability upper bound on the $\ell_{2}$-norm of the product of coordinate projection vectors of the function classes $(\F^{(k)})_{k=1}^{p}$. The proof of Theorem \ref{thm:key_lemma}, along with some discussion, is deferred to Subsection \ref{subsec:prooflemma}.

\begin{theorem}\label{thm:key_lemma}
Under the setting of Theorem \ref{thm:main2}, for any $u>0$, it holds with probability at least $1-\exp(-u^2)$ that
\begin{align*}
&\sup_{f^{(k)}\in \F^{(k)},1\le k\le p} \bigg(\sum_{i=1}^{N}\prod_{k=1}^{p} \left(f^{(k)}(X_i^{(k)})\right)^2\bigg)^{1/2}\\
& \qquad \lesssim_p  \bigg(\prod_{k=1}^{p} d_{\psi_2}(\F^{(k)})\bigg)  \bigg(\sqrt{N}+u\sum_{k=1}^{p} \bar{\gamma}(\F^{(k)},\psi_2)\prod_{k'\ne k}\Big(\bar{\gamma}(\F^{(k')},\psi_2)+(\log N)^{1/2}+u\Big)\bigg),
\end{align*}
where $\bar{\gamma}(\F^{(k)},\psi_2):=\gamma(\F^{(k)},\psi_2)/d_{\psi_2}(\F^{(k)})$ for each $1\le k\le p$.
\end{theorem}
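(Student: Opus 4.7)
The plan is to prove Theorem \ref{thm:key_lemma} by a generic chaining argument that adapts Mendelson's framework from \cite{mendelson2016upper} for quadratic and product processes ($p=2$) to general $p\ge 2$. For each $1\le k\le p$, I will fix a nearly optimal admissible sequence $(\F^{(k)}_s)_{s\ge 0}$ realizing $\gamma(\F^{(k)},\psi_2)$ up to an absolute constant, let $\pi_s f^{(k)}$ denote the best approximant to $f^{(k)}$ in $\F^{(k)}_s$ with respect to the graded $L_q$-distance at level $s$, and set $\Delta_s f^{(k)}:=\pi_{s+1}f^{(k)}-\pi_s f^{(k)}$. Picking a base index $s_0$ with $2^{s_0}\asymp 1$, I telescope each $f^{(k)}=\pi_{s_0}f^{(k)}+\sum_{s\ge s_0}\Delta_s f^{(k)}$ and expand the product $\prod_k f^{(k)}(X_i^{(k)})$ multiplicatively along these chains.

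After expansion, the triangle inequality for $\|\cdot\|_{\ell_2^N}$ reduces the target quantity to (i) a base term $\|\prod_k\pi_{s_0}f^{(k)}(X^{(k)})\|_{\ell_2^N}$ and (ii) families of chain terms in which at least one factor is a $\Delta_s$-increment. For (i), the sixth estimate of Corollary \ref{coro:1} applied to the class $\{\prod_k\pi_{s_0}f^{(k)}:f^{(k)}\in \F^{(k)}\}$, combined with the $L_4$--$\psi_2$ comparison $\|\pi_{s_0}f^{(k)}\|_{L_4}\lesssim d_{\psi_2}(\F^{(k)})$, will yield a contribution of order $\sqrt{N}\prod_k d_{\psi_2}(\F^{(k)})$. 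For (ii), for each non-trivial multi-index in the expansion I will single out a ``leading'' factor $k^*\in\{1,\dots,p\}$ (the one carrying the largest chain level) and apply the coordinate-wise H\"older-type inequality
\[
\bigg\|\prod_{k=1}^p v^{(k)}\bigg\|_{\ell_2^N}\le \|v^{(k^*)}\|_{\ell_2^N}\prod_{k\ne k^*}\|v^{(k)}\|_{\ell_\infty^N}.
\]

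The leading $\ell_2^N$-norm carrying $\Delta_s f^{(k^*)}$ will be bounded through the first estimate of Corollary \ref{coro:1} on the large-coordinate part, $\lesssim u\,2^{s/2}\|\Delta_s f^{(k^*)}\|_{(u^2 2^s)}$, and the second estimate on the small-coordinate part, $\lesssim N^{1/4}\|\Delta_s f^{(k^*)}\|_{L_8}$ (paired via H\"older with the other factors so that the $N^{1/4}$ combines with an $\ell_4$ budget from the remaining factors). Summing over $s\ge s_0$ and invoking Lemma \ref{lemma:Lambda_Gamma} to pass from the resulting $\Lambda$-functional to $\gamma(\F^{(k^*)},\psi_2)$ produces the factor $\bar\gamma(\F^{(k^*)},\psi_2)$. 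The non-leading $\ell_\infty^N$-norms will be handled via a separate chain analysis that produces
\[
\sup_{f^{(k)}\in\F^{(k)}}\|f^{(k)}(X^{(k)})\|_{\ell_\infty^N}\lesssim d_{\psi_2}(\F^{(k)})\big(\bar\gamma(\F^{(k)},\psi_2)+(\log N)^{1/2}+u\big),
\]
where the $(\log N)^{1/2}$ summand arises from the standard sub-Gaussian maximal bound $\max_i|g(X_i)|\lesssim\|g\|_{\psi_2}((\log N)^{1/2}+u)$ at the base level. Aggregating over the $p$ choices of $k^*$ yields the stated sum $u\sum_k\bar\gamma(\F^{(k)},\psi_2)\prod_{k'\ne k}(\bar\gamma(\F^{(k')},\psi_2)+(\log N)^{1/2}+u)$.

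The main obstacle I expect to face is the combinatorial bookkeeping of the multi-index expansion. At each chain level $s$ and each of the $p$ factors, Corollary \ref{coro:1} produces a large/small coordinate split, giving $2^p$ subcases per multi-index; the mixed subcases -- in which the large-coordinate supports of different factors may be disjoint across the $p$ independent sample sequences $(X_i^{(k)})_i$ -- demand a carefully balanced combination of $\ell_\infty$, $\ell_2$, and $\ell_4$ H\"older-type estimates to prevent tail contributions from accumulating as $p$ grows. The fifth and sixth estimates of Corollary \ref{coro:1}, which control products of coordinate projections directly rather than factor-by-factor, will be crucial in avoiding a blow-up of order $p^p$ in the implicit constant. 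A union bound over all $s\ge s_0$ and all $k$ is absorbed by the tail gain $\exp(-c\,u^2 2^{s_0})$ of Corollary \ref{coro:1} through the choice of $s_0$, yielding the stated $\exp(-u^2)$ probability bound.
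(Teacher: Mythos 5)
Your proposal correctly identifies the overall strategy (generic chaining, separating a leading $\ell_2$-factor from $\ell_\infty$-factors, controlling the $\ell_\infty$-norms via a base-level maximal bound) but it is missing the central mechanism that makes the paper's proof of Theorem~\ref{thm:key_lemma} close, and as written it would produce a bound with a spurious $\log N$ factor in front of $\sqrt{N}$.

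The specific gap is in how you treat the ``small-coordinate'' (large-index) part of the coordinate projection across chain levels. You propose to pair the $L_8/\ell_4$ tail estimate for $\Delta_s f^{(k^*)}$ (Corollary~\ref{coro:1}, second estimate) with the $L_8/\ell_4$ budget for the remaining factors (fifth/sixth estimates). That pairing, after H\"older, yields a contribution of order $N^{1/2}\,\|\Delta_s f^{(k^*)}\|_{\psi_2}\prod_{k\ne k^*}d_{\psi_2}(\F^{(k)})$ at each chain level $s$. When summed over $s$, this does \emph{not} converge: $\sum_s\|\Delta_s f^{(k^*)}\|_{\psi_2}$ typically diverges (it only converges after weighting by $2^{s/2}$), and even if you cut the sum at $s_1\asymp\log_2 N$ (the level where $j_s=N+1$), you pick up a $\log N$ factor in front of $\sqrt{N}\prod_k d_{\psi_2}(\F^{(k)})$, which is incompatible with the claimed (and optimal, per Appendix~\ref{app:B}) bound. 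The reason this pairing works in the proof of Theorem~\ref{thm:main2} is that the Bernoulli/H\"offding step injects an extra $u\,2^{s/2}$ factor at level $s$, which makes the tail sum geometrically convergent; in Theorem~\ref{thm:key_lemma} there is no Bernoulli process and hence no such factor, so that argument cannot simply be transplanted.

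What the paper does instead — and what your plan is missing — is a second, ``ring'' telescoping of the term $T_1=\|\prod_k\pi_{s_1}f^{(k)}\|_{\ell_2^N}$ across the concentric index sets $\{j_s\le i<j_{s+1}\}$ for $s_0\le s<s_1$. For each ring one applies the order-statistics Lemma~\ref{lemma:order_stats} (via Corollary~\ref{coro:order_stats}) to bound the $\ell_2$-norm of the \emph{product} $\prod_k\pi_{s+1}f^{(k)}$ restricted to that ring by $\|\prod_k\pi_{s+1}f^{(k)}\|_{L_4}\big(\sum_{i=j_s}^{j_{s+1}-1}(eN/i)^{3/4}\big)^{1/2}$, and the decisive point is the summation bound \eqref{eq:key_lemma_aux3}: $\sum_{s=s_0}^{s_1-2}\big(\sum_{i=j_s}^{j_{s+1}-1}(eN/i)^{3/4}\big)^{1/2}\lesssim N^{1/2}$, which exploits the geometric growth $j_{s+1}\ge 2j_s$ so that the ring contributions are summed geometrically rather than linearly in $s$. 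This is how the clean $\sqrt{N}$ appears with no $\log N$ penalty. Your plan never invokes Lemma~\ref{lemma:order_stats} / Corollary~\ref{coro:order_stats} or any substitute, and the fifth/sixth estimates of Corollary~\ref{coro:1} alone cannot reproduce this cancellation. Beyond this, your $T_2$-like handling of chain levels $s\ge s_1$ (where $j_s=N+1$ and the first estimate covers all $N$ coordinates) is correct, and your treatment of the $\ell_\infty$-norms via Lemma~\ref{lemma:infinity_norm_bound} matches the paper; but the base/ring analysis for $s<s_1$ needs to be redone along the lines above.
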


\begin{remark}\label{remark:optimality}
Integrating the tail bound in Theorem~\ref{thm:key_lemma} yields
\begin{align}\label{eq:aux_key_lemma}
    &\E \sup_{f^{(k)}\in \F^{(k)},1\le k\le p} \bigg(\sum_{i=1}^{N}\prod_{k=1}^{p} \left(f^{(k)}(X_i^{(k)})\right)^2\bigg)^{1/2} \nonumber\\
    &\lesssim_p \bigg(\prod_{k=1}^{p} d_{\psi_2}(\F^{(k)})\bigg)  \bigg(\sqrt{N}+\sum_{k=1}^{p} \bar{\gamma}(\F^{(k)},\psi_2)\prod_{k'\ne k}\Big(\bar{\gamma}(\F^{(k')},\psi_2)+(\log N)^{1/2}\Big)\bigg) \nonumber \\
    &\asymp_p \bigg(\prod_{k=1}^{p} d_{\psi_2}(\F^{(k)})\bigg)  \bigg(\sqrt{N}+\prod_{k=1}^{p} \Big(\bar{\gamma}(\F^{(k)},\psi_2)+(\log N)^{1/2}\Big)\bigg).
\end{align}
The expectation bound \eqref{eq:aux_key_lemma} is optimal up to a constant depending only on $p$; in particular, the logarithmic factors are unavoidable. We demonstrate the sharpness of \eqref{eq:aux_key_lemma} with a simple example in Appendix~\ref{app:B}.
\end{remark}

Now we are ready to prove Theorem \ref{thm:main2}.

\begin{proof}[Proof of Theorem \ref{thm:main2}]

We first observe that the condition $0 \in \F^{(k)}$ or the symmetry of $\F^{(k)}$ (i.e., $f\in \F^{(k)}$ implies $-f \in \F^{(k)}$) ensures that $\gamma(\mcF^{(k)}, \psi_2)\gtrsim d_{\psi_2}(\F^{(k)})$ (see \cite[Lemma 4.6]{al2025sharp}), a fact that we will use repeatedly in the proof.

Let $(\mcF^{(k)}_s)_{s\ge 0}$ be an admissible sequence of $\mcF^{(k)}.$ Let $u\ge 4$ and $s_0\ge 0$. For any $f^{(k)}\in \F^{(k)}$ and $s\ge 0$, let $\pi_s f^{(k)}$ denote the nearest point in $\F^{(k)}_s$ to $f^{(k)}$ with respect to the $(u^2 2^s)$-norm under the probability measure $\mu^{(k)}$. 

The proof will use \emph{symmetrization}, an important technique in empirical process theory \cite[Chapter 2.3]{van2023weak}. Let $(\varepsilon_i)_{i=1}^{N}$ be independent symmetric Bernoulli random variables that are independent of $(X^{(k)}_i)_{i=1}^N, 1\le k\le p$. We seek to establish a high-probability upper bound on the supremum of the Bernoulli process given by
\[
\sup_{f^{(k)}\in \F^{(k)},1\le k\le p} \bigg|\sum_{i=1}^{N}\varepsilon_i \prod_{k=1}^{p} f^{(k)} (X^{(k)}_i) \bigg|.
\]
To that end, we first note that any $\prod_{k=1}^{p} f^{(k)}$ can be represented as a chain, or telescoping sum, of links of the form $\prod_{k=1}^{p} \pi_{s+1} f^{(k)}-\prod_{k=1}^{p} \pi_{s} f^{(k)}.$ Namely 
\begin{align*}
&\prod_{k=1}^{p} f^{(k)} = \sum_{s\ge s_0} \bigg(\prod_{k=1}^{p} \pi_{s+1} f^{(k)}-\prod_{k=1}^{p} \pi_{s} f^{(k)}\bigg) +\prod_{k=1}^{p} \pi_{s_0} f^{(k)}\\
&=\sum_{s\ge s_0}\bigg(\sum_{k=1}^{p} (\pi_{s+1} f^{(1)})\cdots (\pi_{s+1} f^{(k-1)}) (\pi_{s+1} f^{(k)}-\pi_{s} f^{(k)})(\pi_{s} f^{(k+1)})\cdots (\pi_{s} f^{(p)})\bigg)+ \prod_{k=1}^{p} \pi_{s_0} f^{(k)},
\end{align*}
provided that $\pi^{(k)}_{s} f^{(k)}\to f^{(k)}$ as $s \to \infty$. Here, in the second line, for each $s$, we used the telescoping identity $a_1 \cdots a_{p}-b_1 \cdots b_{p}=\sum_{k=1}^{p} a_1 \cdots a_{k-1}\left(a_k-b_k\right) b_{k+1} \cdots b_{p}$ with $a_{k}=\pi_{s+1}f^{(k)}$ and $b_{k}=\pi_{s}f^{(k)}$.

To ease notation, we write $(\pi_{s}f^{(k)})_i:=(\pi_{s}f^{(k)})(X_i^{(k)})$, $\Delta_s f^{(k)}:=\pi_{s+1} f^{(k)}-\pi_s f^{(k)}$ and $(\Delta_s f^{(k)})_i := (\Delta_s f^{(k)})(X^{(k)}_i)$. We will use this notation throughout the remainder of the paper. By the triangle inequality, 
\begin{align}\label{eq:main2_aux1}
&\bigg|\sum_{i=1}^N \varepsilon_i \prod_{k=1}^{p} f^{(k)} (X^{(k)}_i)\bigg|\nonumber\\
&=\bigg| \sum_{i=1}^{N} \varepsilon_i \bigg[ \sum_{s\ge s_0}\bigg(\sum_{k=1}^{p} (\pi_{s+1} f^{(1)})_i\cdots (\pi_{s+1} f^{(k-1)})_i (\Delta_s f^{(k)})_i(\pi_{s} f^{(k+1)})_i\cdots (\pi_{s} f^{(p)})_i\bigg)+ \prod_{k=1}^{p} (\pi_{s_0} f^{(k)})_i \bigg]\bigg| \nonumber\\
&\le \sum_{s\ge s_0} \sum_{k=1}^{p} \bigg|\sum_{i=1}^N \varepsilon_i (\pi_{s+1} f^{(1)})_i\cdots (\pi_{s+1} f^{(k-1)})_i (\Delta_s f^{(k)})_i(\pi_{s} f^{(k+1)})_i\cdots (\pi_{s} f^{(p)})_i\bigg|+\bigg|\sum_{i=1}^N \varepsilon_i \prod_{k=1}^{p} (\pi_{s_0} f^{(k)})_i \bigg| \nonumber\\
&=\sum_{s\ge s_0}\sum_{k=1}^{p}\bigg| \sum_{i=1}^{N} \varepsilon_i \bigg((\Delta_{s}f^{(k)})_i\prod_{k'<k}(\pi_{s+1} f^{(k')})_{i}\prod_{k'>k}(\pi_{s} f^{(k')})_i\bigg) \bigg|+\bigg|\sum_{i=1}^N \varepsilon_i \prod_{k=1}^{p} (\pi_{s_0} f^{(k)})_i \bigg|.
\end{align}

Let $(j_{s})_{s\ge s_0}$ be the sequence defined in \eqref{eq:j_s}. For each $s\ge s_0$, let $I_s$ be the union of the $j_{s}-1$ largest coordinates of $(|\Delta_{s} f^{(k)}|_i)_{i=1}^N$ and the $j_{s}-1$ largest coordinates of 
\[
\bigg(\Big|\prod_{k'<k}(\pi_{s+1} f^{(k')})_{i}\prod_{k'>k}(\pi_{s} f^{(k')})_i\Big|\bigg)_{i=1}^{N},
\]
for every $1\le k\le p$. Then $|I_{s}|\le 2p(j_{s}-1)$. 
We apply Lemma \ref{lemma:Hoffding} with $t=u p^{1/2}2^{s/2}$ and the set $I_s$ to the summand indexed by $s$ and $k$ in \eqref{eq:main2_aux1}. This implies that, for every $s, k$ and every $f^{(k)}\in \F^{(k)} (1\le k\le p)$, with probability at least $1-2 \exp (-u^2 p 2^{s-1})$, the following holds:
\begin{align*}
 &\bigg| \sum_{i=1}^{N} \varepsilon_i \bigg((\Delta_{s}f^{(k)})_i\prod_{k'<k}(\pi_{s+1} f^{(k')})_{i}\prod_{k'>k}(\pi_{s} f^{(k')})_i\bigg) \bigg|\\
     &\le \sum_{i\in I_s}\bigg|(\Delta_{s}f^{(k)})_i\prod_{k'<k}(\pi_{s+1} f^{(k')})_{i}\prod_{k'>k}(\pi_{s} f^{(k')})_i\bigg|\\
     &\quad +u p^{1/2} 2^{s/2} \bigg(\sum_{i\in I_s^c} (\Delta_{s}f^{(k)})^2_i\prod_{k'<k}(\pi_{s+1} f^{(k')})^2_{i}\prod_{k'>k}(\pi_{s} f^{(k')})^2_i \bigg)^{1/2}\\
     &\overset{\text{($\star$)}}{\le} \bigg(\sum_{i\in I_s} (\Delta_s f^{(k)})^2_i\bigg)^{1/2} \bigg(\sum_{i\in I_s} \prod_{k'<k}(\pi_{s+1} f^{(k')})^2_{i}\prod_{k'>k}(\pi_{s} f^{(k')})^2_i\bigg)^{1/2}\\
     &\quad +u p^{1/2} 2^{s/2}\bigg(\sum_{i\in I^c_s} (\Delta_s f^{(k)})^4_i\bigg)^{1/4} \bigg(\sum_{i\in I^c_s} \prod_{k'<k}(\pi_{s+1} f^{(k')})^4_{i}\prod_{k'>k}(\pi_{s} f^{(k')})^4_i\bigg)^{1/4}\\
&\overset{\text{($\star \star$)}}{\lesssim}_p \bigg(\sum_{i<j_{s}}\left(\left(\Delta_{s} f^{(k)}\right)_i^*\right)^2\bigg)^{1 / 2} \sup_{f^{(k')}\in \F^{(k')},k'\ne k}\bigg(\sum_{i=1}^{N} \prod_{k'\ne k} (f^{(k')})^2_{i}\bigg)^{1/2}\\
&\quad+u 2^{s/2}\bigg(\sum_{i\ge j_{s}}\left(\left(\Delta_{s} f^{(k)}\right)_i^*\right)^4\bigg)^{1 / 4}\Bigg( \sum_{i\ge j_s}  \bigg(\bigg( \prod_{k'<k}\pi_{s+1} f^{(k')}\prod_{k'>k}\pi_{s} f^{(k')} \bigg)^*_{i}\bigg)^4\Bigg)^{1/4},
\end{align*}
where ($\star$) follows by Cauchy-Schwarz inequality and in ($\star\star$) we use that $ \sum_{i\in I_s} (\Delta_s f^{(k)})^2_i\le 2p \sum_{i<j_s} \big(\big(\Delta_s f^{(k)}\big)^*_i \big)^2 $ since $|I_{s}|\le 2p(j_s-1)$, and that
\begin{align*}
 \bigg(\sum_{i\in I_s} \prod_{k'<k}(\pi_{s+1} f^{(k')})^2_{i}\prod_{k'>k}(\pi_{s} f^{(k')})^2_i\bigg)^{1/2}\le \sup_{f^{(k')}\in \F^{(k')},k'\ne k}\bigg(\sum_{i=1}^{N} \prod_{k'\ne k} (f^{(k')})^2_{i}\bigg)^{1/2}.
\end{align*}

Further, repeating this argument for the last term in \eqref{eq:main2_aux1} gives that, for every $f^{(k)}\in \F^{(k)} (1\le k\le p)$, with probability at least $1-2\exp(- u^2 p 2^{s_0-1})$,
\begin{align*}
\bigg|\sum_{i=1}^{N} \varepsilon_i \prod_{k=1}^{p}(\pi_{s_0} f^{(k)})_i \bigg|&\lesssim_p \bigg(\sum_{i<j_{s_0}}\left(\left(\pi_{s_0} f^{(1)}\right)_i^*\right)^2\bigg)^{1 / 2} \sup_{f^{(k')}\in \F^{(k')},k'\ne 1}\bigg(\sum_{i=1}^{N} \prod_{k'\ne 1} (f^{(k')})^2_{i}\bigg)^{1/2}\\
&\quad +u 2^{s_0/2} \bigg(\sum_{i\ge j_{s_0}}\left(\left(\pi_{s_0} f^{(1)}\right)_i^*\right)^4\bigg)^{1 / 4}\Bigg( \sum_{i\ge j_{s_0}}  \bigg(\bigg( \prod_{k'\ne 1}\pi_{s_0} f^{(k')} \bigg)^*_{i}\bigg)^4\Bigg)^{1/4}.
\end{align*}

Observe that $|\{\Delta_s f^{(k)}: f^{(k)}\in \F^{(k)}\}|\le 2^{2^{s}}\cdot 2^{2^{s+1}}\le 2^{2^{s+2}}$ and
\[
\bigg|\bigg\{\prod_{k'<k}\pi_{s+1} f^{(k')}\prod_{k'>k}\pi_{s} f^{(k')}: f^{(k')}\in \F^{(k')} \bigg\} \bigg|\le \prod_{k'<k}2^{2^{s+1}} \prod_{k'>k}2^{2^{s}}\le 2^{(p-1)2^{s+1}},
\]
\[
\bigg|\bigg\{\prod_{k=1}^{p}\pi_{s_0}f^{(k)}: f^{(k)}\in \F^{(k)} \bigg\} \bigg|\le \prod_{k=1}^{p}2^{2^{s_0}}= 2^{p2^{s_0}}.
\]
For $u\ge 4$, we sum the probabilities for all $s\ge s_0$ and $1\le k\le p$ to obtain that, with $(\varepsilon_i)_{i=1}^{N}$ probability at least 
\[
1-2\exp(-u^2 p 2^{s_0-1})\cdot 2^{p2^{s_0}} -\sum_{s\ge s_0} 2\exp(-u^2p 2^{s-1})\cdot 2^{(p-1)2^{s+1}}\cdot 2^{2^{s+2}}\ge 1-2\exp(-u^2 2^{s_0}/8),
\]
for every $f^{(k)}\in \F^{(k)} (1\le k\le p)$,
\begin{align*}
    &\bigg|\sum_{i=1}^N \varepsilon_i \prod_{k=1}^{p} f^{(k)} (X^{(k)}_i)\bigg|\\
    &\le \sum_{s\ge s_0}\sum_{k=1}^{p}\bigg| \sum_{i=1}^{N} \varepsilon_i \bigg((\Delta_{s}f^{(k)})_i\prod_{k'<k}(\pi_{s+1} f^{(k')})_{i}\prod_{k'>k}(\pi_{s} f^{(k')})_i\bigg) \bigg|+\bigg|\sum_{i=1}^N \varepsilon_i \prod_{k=1}^{p} (\pi_{s_0} f^{(k)})_i \bigg|\\
    &\lesssim_p  \sum_{k=1}^{p} \Bigg(\sum_{s\ge s_0}\bigg(\sum_{i<j_{s}}\left(\left(\Delta_{s} f^{(k)}\right)_i^*\right)^2\bigg)^{1 / 2}+\bigg(\sum_{i<j_{s_0}}\left(\left(\pi_{s_0} f^{(k)}\right)_i^*\right)^2\bigg)^{1 / 2}\Bigg) \sup_{f^{(k')}\in \F^{(k')},k'\ne k}\bigg(\sum_{i=1}^{N} \prod_{k'\ne k} (f^{(k')})^2_{i}\bigg)^{1/2}\\
&\quad+u \sum_{k=1}^{p}\sum_{s\ge s_0}2^{s/2}\bigg(\sum_{i\ge j_{s}}\left(\left(\Delta_{s} f^{(k)}\right)_i^*\right)^4\bigg)^{1 / 4}\Bigg( \sum_{i\ge j_s}  \bigg(\bigg( \prod_{k'<k}\pi_{s+1} f^{(k')}\prod_{k'>k}\pi_{s} f^{(k')} \bigg)^*_{i}\bigg)^4\Bigg)^{1/4}\\
&\quad + u 2^{s_0/2} \bigg(\sum_{i\ge j_{s_0}}\left(\left(\pi_{s_0} f^{(1)}\right)_i^*\right)^4\bigg)^{1 / 4}\Bigg( \sum_{i\ge j_{s_0}}  \bigg(\bigg( \prod_{k'\ne 1}\pi_{s_0} f^{(k')} \bigg)^*_{i}\bigg)^4\Bigg)^{1/4}.
\end{align*}

On the other hand, we know from Corollary \ref{coro:1} that with $(X^{(k)}_i)_{i=1}^{N} (1\le k\le p)$ probability at least $1-2\exp(-c_1u^2 2^{s_0})$, the four inequalities in Corollary \ref{coro:1} hold for every $1\le k\le p$, every $f^{(k)} \in \F^{(k)}$ and $s\ge s_0$ simultaneously. Therefore, combining the $(\varepsilon_i)_{i=1}^N$ probability and the $(X^{(k)}_i)_{i=1}^N (1\le k\le p)$ probability implies that there exists a constant $\tilde{c}(p)$ depending only on $p$ such that, for $u\ge c(p)\lor 4$, it holds with probability at least
\[
1-2\exp(-u^2 2^{s_0}/8)-2\exp(-c_1u^2 2^{s_0})\ge 1-2 \exp \left(-\tilde{c}(p) u^2 2^{s_{0}}\right)
\]
that, for every $1\le k\le p$ and every $f^{(k)}\in \F^{(k)}$,
\begin{align}\label{eq:main2_aux_special}
    &\bigg|\sum_{i=1}^N \varepsilon_i \prod_{k=1}^{p} f^{(k)} (X^{(k)}_i)\bigg| \nonumber \\
    &\lesssim_p \sum_{k=1}^{p}  u \bigg(\sum_{s\ge s_0}2^{s/2}\|\Delta_s f^{(k)}\|_{(c_3 u^2 2^s)}+2^{s_0/2}\|\pi_{s_0}f^{(k)}\|_{(c_3 u^2 2^{s_0})}\bigg)\sup_{f^{(k')}\in \F^{(k')},k'\ne k}\bigg(\sum_{i=1}^{N} \prod_{k'\ne k} (f^{(k')})^2_{i}\bigg)^{1/2} \nonumber\\
    &\quad +u \sum_{k=1}^{p} \bigg(\sum_{s\ge s_0} 2^{s/2} N^{1/4} \|\Delta_s f^{(k)}\|_{L_8} N^{1/4}\bigg\|\prod_{k'<k}\pi_{s+1} f^{(k')}\prod_{k'>k}\pi_{s} f^{(k')}\bigg\|_{L_8}\bigg)\nonumber\\
    &\quad+ u2^{s_0/2} N^{1/4} \|\pi_{s_0}f^{(1)}\|_{L_8} N^{1/4} \bigg\|\prod_{k'\ne 1}\pi_{s_0} f^{(k')}\bigg\|_{L_8} \\
    &\overset{\text{($\star$)}}{\lesssim}_p u\sum_{k=1}^{p} \widetilde{\Lambda}_{s_0,cu}(\F^{(k)}) \sup_{f^{(k')}\in \F^{(k')},k'\ne k}\bigg(\sum_{i=1}^{N} \prod_{k'\ne k} (f^{(k')})^2_{i}\bigg)^{1/2} \nonumber \\
    &\quad + uN^{1/2}\sum_{k=1}^{p}\bigg(\prod_{k'\ne k} d_{\psi_2}(\F^{(k')})\bigg) \bigg(\sum_{s\ge s_0} 2^{s/2} \|\Delta_s f^{(k)}\|_{\psi_2}\bigg)+u2^{s_0/2} N^{1/2} \prod_{k=1}^{p} d_{\psi_2}(\F^{(k)}) \nonumber\\
    &\overset{\text{($\star\star$)}}{\lesssim}_p u\sum_{k=1}^{p} \widetilde{\Lambda}_{s_0,cu}(\F^{(k)}) \sup_{f^{(k')}\in \F^{(k')},k'\ne k}\bigg(\sum_{i=1}^{N} \prod_{k'\ne k} (f^{(k')})^2_{i}\bigg)^{1/2} \nonumber\\
    &\quad + uN^{1/2}\sum_{k=1}^{p }\bigg(\prod_{k'\ne k} d_{\psi_2}(\F^{(k')})\bigg) \bigg(\gamma(\F^{(k)},\psi_2)+2^{s_0/2}d_{\psi_2}(\F^{(k)})\bigg),\nonumber
\end{align}
where ($\star$) follows by choosing almost optimal admissible sequences in $\F^{(k)}$ for $1\le k\le p$ and using the definition of $\widetilde{\Lambda}_{s_0,u}(\F^{(k)})$, together with $\|\Delta_s f^{(k)}\|_{L_8}\lesssim \|\Delta_s f^{(k)}\|_{\psi_2}$ and H\"older's inequality. In particular, for each $k$ and $s$,
\[
\bigg\|\prod_{k'<k}\pi_{s+1} f^{(k')}\prod_{k'>k}\pi_{s} f^{(k')}\bigg\|_{L_8}\le \prod_{k'<k}  \|\pi_{s+1} f^{(k')}\|_{L_{8(p-1)}}  \prod_{k'>k} \|\pi_{s}f^{(k')}\|_{L_{8(p-1)}} \lesssim_p \prod_{k'\ne k} d_{\psi_2}(\F^{(k')}),
\]
and
\[
\|\pi_{s_0}f^{(1)}\|_{L_8} \bigg\|\prod_{k'\ne 1} \pi_{s_0}f^{(k')}\bigg\|_{L_8}\le \|\pi_{s_0}f^{(1)}\|_{L_8} \prod_{k'\ne 1} \|\pi_{s_0}f^{(k')}\|_{L_{8(p-1)}}\lesssim_p \prod_{k=1}^{p} d_{\psi_2}(\F^{(k)}),
\]
where we used Hölder's inequality in the form $   \big\|\prod_{j=1}^{p-1}Z_j\big\|_{L_8}\le \prod_{j=1}^{p-1} \|Z_j\|_{L_{8(p-1)}}$ for any random variables $Z_1,\ldots,Z_{p-1}$; and ($\star\star$) follows by noting that
\begin{align*}
\sum_{s\ge s_0} 2^{s/2}\|\Delta_s f^{(k)}\|_{\psi_2}&\le \sum_{s\ge s_0} 2^{s/2}(\|f-\pi_{s}f^{(k)}\|_{\psi_2}+\|f-\pi_{s+1}f^{(k)}\|_{\psi_2}) \lesssim \gamma(\F^{(k)},\psi_2).
\end{align*}

Now we apply Theorem~\ref{thm:key_lemma} and a union bound to obtain the high-probability bound: for any $u>0$, it holds with probability at least $1-p\exp(-u^2)$ that, for every $1\le k\le p$,
\begin{align*}
 &\sup_{f^{(k')}\in \F^{(k')},k'\ne k}\bigg(\sum_{i=1}^{N} \prod_{k'\ne k} (f^{(k')})^2_{i}\bigg)^{1/2}\\
 &\qquad \lesssim_p  \bigg(\prod_{k'\ne k} d_{\psi_2}(\F^{(k')})\bigg)  \bigg(\sqrt{N}+u\sum_{k'\ne k} \bar{\gamma}(\F^{(k')},\psi_2)\prod_{k''\ne k,k'}\Big(\bar{\gamma}(\F^{(k'')},\psi_2)+(\log N)^{1/2}+u\Big)\bigg).
\end{align*}
Moreover, by Lemma \ref{lemma:Lambda_Gamma}, $ \Lambda_{s_0, cu}(\F^{(k)}) \lesssim \gamma(\F^{(k)}, \psi_2)$, and hence 
\[
\widetilde{\Lambda}_{s_0,cu}(\F^{(k)})=\Lambda_{s_0, cu}(\F^{(k)})+2^{s_0 / 2} \sup_{f^{(k)} \in \F^{(k)}}\|\pi_{s_0} f^{(k)}\|_{(c^2 u^2 2^{s_0})}\lesssim \gamma(\F^{(k)},\psi_2)+2^{s_0/2}d_{\psi_2}(\F^{(k)}).
\]
Consequently, by taking $s_0\asymp 1$, we have shown that there exist constants $c_1(p), c_2(p)$ and $c_3(p)$ such that, for any $u\ge c_1(p)$, it holds with probability at least $1-2\exp(-\tilde{c}(p)u^2 2^{s_0})-p\exp(-u^2)\ge 1-c_2(p)\exp(-c_3(p)u^2)$ that

\begin{align*}
 &\sup_{f^{(k)}\in \F^{(k)},1\le k\le p}   \bigg|\sum_{i=1}^N \varepsilon_i \prod_{k=1}^{p} f^{(k)} (X^{(k)}_i)\bigg|\\
 &\lesssim_p u\sum_{k=1}^{p} \widetilde{\Lambda}_{s_0,cu}(\F^{(k)}) \sup_{f^{(k')}\in \F^{(k')},k'\ne k}\bigg(\sum_{i=1}^{N} \prod_{k'\ne k} (f^{(k')})^2_{i}\bigg)^{1/2}\\
 &\quad + uN^{1/2}\sum_{k=1}^{p }\bigg(\prod_{k'\ne k} d_{\psi_2}(\F^{(k')})\bigg) \bigg(\gamma(\F^{(k)},\psi_2)+2^{s_0/2}d_{\psi_2}(\F^{(k)})\bigg)\\
 &\lesssim_p u\sum_{k=1}^{p} \gamma(\F^{(k)},\psi_2)  \bigg(\prod_{k'\ne k} d_{\psi_2}(\F^{(k')})\bigg)  \bigg(\sqrt{N}+u\sum_{k'\ne k} \bar{\gamma}(\F^{(k')},\psi_2)\prod_{k''\ne k,k'}\Big(\bar{\gamma}(\F^{(k'')},\psi_2)+(\log N)^{1/2}+u\Big)\bigg)\\
 &\quad + uN^{1/2}\sum_{k=1}^{p }\bigg(\prod_{k'\ne k} d_{\psi_2}(\F^{(k')})\bigg) \gamma(\F^{(k)},\psi_2)\\
 &= \bigg(\prod_{k=1}^{p}d_{\psi_2}(\F^{(k)})\bigg)u\sqrt{N}\bigg(\sum_{k=1}^{p}\bar{\gamma}(\F^{(k)},\psi_2)\bigg)\\
 &\quad +\bigg(\prod_{k=1}^{p}d_{\psi_2}(\F^{(k)})\bigg) u^2 \sum_{k\ne k'} \bar{\gamma}(\F^{(k)},\psi_2)\bar{\gamma}(\F^{(k')},\psi_2)\prod_{k''\ne k,k'}\Big(\bar{\gamma}(\F^{(k'')},\psi_2)+(\log N)^{1/2}+u\Big),
\end{align*}
where $\bar{\gamma}(\F^{(k)},\psi_2):=\gamma(\F^{(k)},\psi_2)/d_{\psi_2}(\F^{(k)})$ for $1\le k\le p$. 

Integrating the above tail bound (using Lemma~\ref{lemma:tail_to_moment} with $F(u)$ a polynomial with nonnegative coefficients; cf.~Example~\ref{ex:growth_positive_poly}) yields the following $q$-th moment estimate: for any $q\ge 1$,
\begin{align*}
&\left(\E \sup_{f^{(k)}\in \F^{(k)},1\le k\le p} \bigg|\sum_{i=1}^{N}\varepsilon_i \prod_{k=1}^{p} f^{(k)} (X^{(k)}_i) \bigg|^q\right)^{1/q} \\
&\lesssim_{p,q} \bigg(\prod_{k=1}^{p} d_{\psi_2}(\F^{(k)})\bigg)\Bigg(\sqrt{N}\bigg(\sum_{k=1}^{p}\bar{\gamma}(\F^{(k)},\psi_2)\bigg)\\
&\quad +\sum_{k\ne k'} \bar{\gamma}(\F^{(k)},\psi_2)\bar{\gamma}(\F^{(k')},\psi_2)\prod_{k''\ne k,k'}\Big(\bar{\gamma}(\F^{(k'')},\psi_2)+(\log N)^{1/2}\Big)\Bigg) \\
&\lesssim_{p,q} \bigg(\prod_{k=1}^{p} d_{\psi_2}(\F^{(k)})\bigg)\left(\sqrt{N}\bigg(\sum_{k=1}^{p}\bar{\gamma}(\F^{(k)},\psi_2)\bigg)+\prod_{k=1}^{p}\Big(\bar{\gamma}(\F^{(k)},\psi_2)+(\log N)^{1/2}\Big)\right).
\end{align*}

Finally, by the symmetrization inequality for empirical processes \cite[Lemma 2.3.1]{van2023weak}, we have
\begin{align*}
&\left(\E \sup_{f^{(k)} \in \mathcal{F}^{(k)},1\le k\le p}\bigg|\frac{1}{N} \sum_{i=1}^N \prod_{k=1}^p f^{(k)} (X^{(k)}_i)-\mathbb{E} \prod_{k=1}^p f^{(k)} (X^{(k)})\bigg|^q\right)^{1/q} \\
&\le 2 \left(\E \sup_{f^{(k)}\in \F^{(k)},1\le k\le p} \bigg|\frac{1}{N}\sum_{i=1}^{N}\varepsilon_i \prod_{k=1}^{p} f^{(k)} (X^{(k)}_i) \bigg|^q\right)^{1/q} \\
&\lesssim_{p,q} \bigg(\prod_{k=1}^{p} d_{\psi_2}(\F^{(k)})\bigg)\bigg(\frac{\sum_{k=1}^{p}\bar{\gamma}(\F^{(k)},\psi_2)}{\sqrt{N}}+\frac{\prod_{k=1}^{p}\big(\bar{\gamma}(\F^{(k)},\psi_2)+(\log N)^{1/2}\big)}{N}\bigg),
\end{align*}
which concludes the proof.

\end{proof}

\subsection{Proof of Theorem \ref{thm:key_lemma}}\label{subsec:prooflemma}

This subsection presents the proof of Theorem \ref{thm:key_lemma}, which establishes a high-probability upper bound on the $\ell_{2}$-norm of the product of coordinate projection vectors associated with the function classes $(\F^{(k)})_{k=1}^{p}$:
\[
\sup_{f^{(k)}\in \F^{(k)},1\le k\le p} \bigg(\sum_{i=1}^{N}\prod_{k=1}^{p} \left(f^{(k)}(X_i^{(k)})\right)^2\bigg)^{1/2}.
\]
The proof is based on generic chaining for the classes $(\F^{(k)})_{k=1}^{p}$, and builds on ideas of Mendelson \cite{mendelson2016upper} for controlling multiplier and product processes, including the use of order statistics to separate the contributions of the largest and smallest coordinates of the increments (via monotone sums). In this sense, our result can be viewed as a multi-product analogue of \cite[Lemma 2.3]{mendelson2016upper}, which corresponds (roughly) to the case $p=1$. The main new difficulty is that we must handle $p$ potentially different function classes evaluated at $p$ different random variables, and leverage the resulting multilinear structure to carry out chaining across these classes. 

A crucial step in controlling the increments is to incorporate $\ell_{\infty}$-norm bounds on the relevant coordinate projection vectors at appropriate stages of the chaining argument, which introduces $\sqrt{\log N}$ factors. Although this bound may appear crude, it is in fact sharp: these $\sqrt{\log N}$ terms are unavoidable in general; see Appendix~\ref{app:B}. In particular, even if some class $\F^{(k)}$ is ``simple'' in a global complexity sense, its contribution to the overall bound still necessarily includes at least a $\sqrt{\log N}$ term, reflecting its largest coordinate fluctuation in $\ell_{\infty}$. This viewpoint provides a principled way to understand the behavior of products of coordinate projection vectors across heterogeneous classes $(\F^{(k)})_{k=1}^{p}$ and different underlying random variables. 

We note that when $ \F^{(1)}= \cdots=\F^{(p)}=\F$, $X^{(1)}=\cdots =X^{(p)}=X$, and $X^{(1)}_i=\cdots = X^{(p)}_i=X_i \ $ for all $1\le i\le N,$ Theorem \ref{thm:key_lemma} reduces to \cite[Theorem 4.5]{al2025sharp}. However, the proof presented here is genuinely different from that of \cite[Theorem 4.5]{al2025sharp}, which relies on chaining with a Young-function argument and separates the sub-Gaussian and sub-exponential regimes. In contrast, our setting involves heterogeneous function classes evaluated at different random variables and requires incorporating $\ell_{\infty}$-norm control within the chaining procedure, together with a sharp accounting of the resulting $\sqrt{\log N}$ factors. These features ---and in particular the necessity of the $\sqrt{\log N}$ terms--- are not captured by the approach in \cite{al2025sharp}.

We begin by establishing several lemmas needed for the proof of Theorem \ref{thm:key_lemma}.

The following result provides a sharp high-probability upper bound for the $\ell_{\infty}$-norm of the coordinate projection vector $(f(X_i))_{i=1}^{N}$.

\begin{lemma}\label{lemma:infinity_norm_bound}
  For any $u>0$, it holds with probability at least $1-\exp(-u^2)$ that
  \[
\sup_{f\in \F} \max_{1\le i\le N} |f(X_i)|\lesssim \gamma(\F,\psi_2)+d_{\psi_2}(\F) (\sqrt{\log N}+u).
  \]
\end{lemma}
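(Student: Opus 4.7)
The plan is to apply a standard generic chaining argument, but starting the chain at a level $s_0$ tuned to the scale $\sqrt{\log N}+u$. Fix an almost-optimal admissible sequence $(\F_s)_{s \ge 0}$ of $\F$ with respect to the $\psi_2$-norm induced by $\mu$, and choose an integer $s_0$ so that $2^{s_0} \asymp \log N + u^2$. Writing $\pi_s f$ for the projection of $f$ onto $\F_s$ and telescoping,
\[
f \;=\; \pi_{s_0} f \;+\; \sum_{s \ge s_0}\bigl(\pi_{s+1}f - \pi_s f\bigr),
\]
so by the triangle inequality
\[
\max_{1\le i\le N}|f(X_i)| \;\le\; \max_{1\le i\le N}|\pi_{s_0}f(X_i)| \;+\; \sum_{s \ge s_0}\max_{1\le i\le N}\bigl|(\pi_{s+1}f - \pi_s f)(X_i)\bigr|.
\]

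\noindent\textbf{The starting level.} Since $|\F_{s_0}|\le 2^{2^{s_0}}$ and $\|\pi_{s_0}f\|_{\psi_2}\le d_{\psi_2}(\F)$, the sub-Gaussian tail together with a union bound over the at most $N \cdot 2^{2^{s_0}}$ pairs $(g,i)$ with $g \in \F_{s_0}$ gives, with probability at least $1-\tfrac12 \exp(-u^2)$,
\[
\sup_{f \in \F}\max_{i\le N}|\pi_{s_0}f(X_i)| \;\lesssim\; d_{\psi_2}(\F)\bigl(\sqrt{\log N} + 2^{s_0/2} + u\bigr) \;\lesssim\; d_{\psi_2}(\F)\bigl(\sqrt{\log N} + u\bigr),
\]
using the choice of $s_0$. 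The exponent in the union bound swallows $\log N + 2^{s_0} + u^2$, which matches the squared threshold.

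\noindent\textbf{The chained increments.} At each level $s \ge s_0$ there are at most $|\F_{s+1}|\cdot|\F_s|\le 2^{2^{s+2}}$ distinct increments $\pi_{s+1}f-\pi_s f$. A union bound over those increments and the $N$ indices, combined with the sub-Gaussian tail, yields that, with probability at least $1 - 2^{-s-2}\exp(-u^2)$,
\[
\sup_{f \in \F}\max_{i\le N}\bigl|(\pi_{s+1}f - \pi_s f)(X_i)\bigr| \;\lesssim\; \bigl(\sqrt{\log N} + 2^{s/2} + u\bigr)\,\|\pi_{s+1}f - \pi_s f\|_{\psi_2}.
\]
Because $s \ge s_0$ forces $\sqrt{\log N}+u\lesssim 2^{s/2}$, the prefactor collapses to $2^{s/2}$. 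Summing over $s \ge s_0$ and applying the triangle inequality $\|\pi_{s+1}f-\pi_s f\|_{\psi_2}\le \|f-\pi_s f\|_{\psi_2}+\|f-\pi_{s+1}f\|_{\psi_2}$ bounds the total chained contribution by a constant multiple of $\gamma(\F,\psi_2)$, since the admissible sequence was chosen almost optimally.

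\noindent\textbf{Putting it together.} A further union bound sums the failure probabilities at the different levels to at most $\exp(-u^2)$, and on the good event
\[
\sup_{f\in \F}\max_{i\le N}|f(X_i)| \;\lesssim\; \gamma(\F,\psi_2) + d_{\psi_2}(\F)\bigl(\sqrt{\log N}+u\bigr).
\]
The main (minor) obstacle is bookkeeping: choosing $s_0$ so that the union-bound exponents at both the starting level and each chained level can be simultaneously absorbed into $\exp(-u^2)$, and verifying that the $\sqrt{\log N}+u$ contribution at the tail levels is dominated by the $2^{s/2}$ term so that the chained sum remains controlled by $\gamma(\F,\psi_2)$. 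Separability of $\bigcup_s \F_s$ in $\F$ ensures that the telescoping series converges and the argument transfers to all of $\F$.
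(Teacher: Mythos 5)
Your proof is correct, but it takes a more hands-on route than the paper's. The paper treats the single-sample uniform tail bound as a black box: it cites \cite[Theorem~3.2]{dirksen2015tail} (which says that for any $t\ge 1$, $\sup_{f\in\F}|f(X)|\lesssim \gamma(\F,\psi_2)+d_{\psi_2}(\F)\,t$ with probability $\ge 1-\exp(-t^2)$), applies a union bound over the $N$ samples, and then sets $t=\sqrt{\log N}+u$ so that $N\exp(-t^2)\le\exp(-u^2)$. Your argument effectively unpacks the proof of that cited tail bound and weaves the union bound over $i\in\{1,\dots,N\}$ into each level of the chain, paying for the extra $N$-fold multiplicity by shifting the starting level to $s_0\asymp \log N+u^2$. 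The bookkeeping checks out: at the base level the cardinality $N\cdot 2^{2^{s_0}}$ forces the threshold $\sqrt{\log N}+2^{s_0/2}+u\asymp \sqrt{\log N}+u$, and for $s\ge s_0$ the choice of $s_0$ guarantees $\sqrt{\log N}+u\lesssim 2^{s/2}$, so the increment prefactors collapse and the chained sum is dominated by $\gamma(\F,\psi_2)$; the residual failure probabilities telescope to $\le \exp(-u^2)$. What the paper's route buys is brevity and modularity (two lines, one citation); what yours buys is self-containedness, making explicit that the shifted starting level $s_0\asymp t^2$ is exactly what produces the linear-in-$t$ deviation term in the Dirksen bound. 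Both are sound.
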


% \begin{remark}
%     Optimality:
% \end{remark}

\begin{proof}[Proof of Lemma \ref{lemma:infinity_norm_bound}]
    By \cite[Theorem 3.2]{dirksen2015tail} (see also \cite[Theorem 4.5]{al2025sharp}), for any $t\ge 1$, it holds with probability at least $1-\exp(-t^2)$ that
    \[
    \sup_{f\in \F} |f(X)|\lesssim \gamma(\F,\psi_2)+d_{\psi_2}(\F)t.
    \]
   Taking a union bound yields that, with probability at least $1-N\exp(-t^2),$
    \[
    \max_{1\le i\le N} \sup_{f\in \F} |f(X_i)|\lesssim \gamma(\F,\psi_2)+d_{\psi_2}(\F)t.
    \]
    Setting $t=\sqrt{\log N}+u$, we have with probability at least $1-N\exp(-(\sqrt{\log N}+u)^2)\ge 1-\exp(-u^2)$ that
    \[
    \max_{1\le i\le N} \sup_{f\in \F} |f(X_i)|\lesssim \gamma(\F,\psi_2)+d_{\psi_2}(\F)(\sqrt{\log N}+u).
    \]
    The proof is complete, since $\sup_{f\in \F} \max_{1\le i\le N} |f(X_i)|=\max_{1\le i\le N} \sup_{f\in \F}|f(X_i)|$.
\end{proof}

% Recall that for a vector $x=(x_i)_{i=1}^{N}\in \R^{N}$, we denote by $\left(x_i^*\right)_{i=1}^N$  the non-increasing rearrangement of $\left(\left|x_i\right|\right)_{i=1}^N$. 

The next lemma is a consequence of \cite[Lemma 3.2]{mendelson2016upper}, as noted in \cite[Remark 3.3]{mendelson2016upper}. We state the result in the special case obtained by setting $q=4, r=2, t=2, \rho=3/2, \beta=1/2,$ and $\alpha=3/4$ in their notation. 

% For the reader’s convenience and for completeness, a self-contained proof is provided in Appendix \ref{app:A}.

\begin{lemma}[{\cite[Lemma 3.2, Remark 3.3]{mendelson2016upper}}]\label{lemma:order_stats}
There exists an absolute constant $c_0>0$ such that the following holds. Let $Z\in L_{4}$ and let $Z_1,\ldots, Z_N$ be independent copies of $Z$. Fix $1\le w\le N$, and define
\begin{align}\label{eq:j_0}
\mathsf{j_{*}}=\left\lceil  \frac{c_0 w}{\log (4+eN/w)} \right\rceil.
\end{align}
Then, with probability at least $1-\exp(-w)$, the following inequality holds simultaneously for all $i\ge \mathsf{j_{*}}$:
\[
Z_i^*\le 2\|Z\|_{L_4} \left(\frac{eN}{i}\right)^{3/8},
\]
where $Z_i^*$ denotes the $i$-th largest value among $|Z_1|,\ldots, |Z_{N}|$.
\end{lemma}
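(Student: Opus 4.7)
The plan is to reduce the simultaneous order-statistic bound to a dyadic union of binomial tail estimates. The starting observation is that $\{Z_i^*\ge\lambda\}$ coincides with $\{|\{j\le N:|Z_j|\ge\lambda\}|\ge i\}$, which is a $\tbin(N,p_\lambda)$-tail event with $p_\lambda:=\P(|Z|\ge\lambda)$. Applying Markov to $|Z|^4$ gives $p_\lambda\le \|Z\|_{L_4}^4/\lambda^4$, and plugging in the candidate threshold $\lambda_i:=2\|Z\|_{L_4}(eN/i)^{3/8}$ produces
\[
N p_{\lambda_i}\le \tfrac{1}{16}\,i\,(i/eN)^{1/2}.
\]
The exponent $3/8$ is chosen precisely to yield the extra $(i/eN)^{1/2}$ factor beyond the naive $L_4$--Markov estimate; this is what drives the geometric decay below.

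With the mean controlled, the core probabilistic input is the Chernoff tail $\P(\tbin(N,p)\ge i)\le (eNp/i)^i$, valid whenever $i\ge eNp$. Combined with the display above, for each fixed $i\le N$ one obtains the pointwise estimate
\[
\P(Z_i^*\ge\lambda_i)\le \exp\!\bigl(-\tfrac{i}{2}\log(eN/i)\bigr),
\]
after absorbing absolute constants into the argument of the exponential.

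To convert this into a statement uniform in $i\ge\mathsf{j_{*}}$, I would discretize on a dyadic scale via $i_k:=2^k\mathsf{j_{*}}$. Both $i\mapsto Z_i^*$ and $i\mapsto\lambda_i$ are non-increasing, so if $Z_{i_k}^*\le\lambda_{i_{k+1}}$ for every $k\ge 0$ with $i_k\le N$, then for any $i\in[i_k,i_{k+1}]$ one has $Z_i^*\le Z_{i_k}^*\le\lambda_{i_{k+1}}\le\lambda_i$, giving the uniform bound at no loss. Applying the pointwise Chernoff estimate to each pair $(i_k,\lambda_{i_{k+1}})$ and summing produces
\[
\sum_{k\ge 0}\exp\!\bigl(-c\,i_k\log(eN/i_k)\bigr),
\]
a rapidly converging series dominated by its $k=0$ term $\exp\!\bigl(-c\,\mathsf{j_{*}}\log(eN/\mathsf{j_{*}})\bigr)$.

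The main obstacle, and the place where the precise form of $\mathsf{j_{*}}$ in \eqref{eq:j_0} is crucial, is verifying $\mathsf{j_{*}}\log(eN/\mathsf{j_{*}})\gtrsim w$ so that the dyadic sum collapses to $\exp(-w)$ after tuning $c_0$. The verification is short once written out: from $\mathsf{j_{*}}\asymp w/\log(4+eN/w)$ and the elementary bound $\mathsf{j_{*}}\lesssim w$ (since $\log(4+eN/w)\ge\log 4$), one has $eN/\mathsf{j_{*}}\gtrsim eN/w$, hence $\log(eN/\mathsf{j_{*}})\asymp\log(4+eN/w)$; multiplying the two estimates gives the required lower bound. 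Choosing $c_0$ large enough to absorb the Chernoff constant $c$ and the constant from the geometric series closes the argument at probability exactly $1-\exp(-w)$.
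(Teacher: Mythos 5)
The paper does not give its own proof here: Lemma~\ref{lemma:order_stats} is imported verbatim from Mendelson's work, cited as \cite[Lemma 3.2, Remark 3.3]{mendelson2016upper} with a specific choice of the free parameters, so there is no in-paper argument to compare against. Your reconstruction—writing $\{Z_i^*\ge\lambda\}$ as a $\tbin(N,p_\lambda)$ tail event, controlling $p_{\lambda_i}$ via Markov on $|Z|^4$ (the exponent $3/8$ chosen so that $Np_{\lambda_i}\lesssim i(i/eN)^{1/2}$), applying the Chernoff bound $\P(\tbin(N,p)\ge i)\le(eNp/i)^i$, and chaining along the dyadic grid $i_k=2^k\mathsf{j_{*}}$ using the monotonicity of both $i\mapsto Z_i^*$ and $i\mapsto\lambda_i$—is exactly the standard route and is essentially the argument Mendelson gives, and it does establish the stated result.

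The one spot where the sketch is too loose is the penultimate step. From $eN/\mathsf{j_{*}}\gtrsim eN/w$ you cannot deduce $\log(eN/\mathsf{j_{*}})\asymp\log(4+eN/w)$: taking a logarithm of a two-sided comparison $x\gtrsim y$ only gives $\log x\ge\log y-C$, which is useless as a multiplicative bound precisely when $\log y=O(1)$, i.e.\ in the regime $w\asymp N$. What you actually need, and can prove, is the one-sided bound $\log(eN/\mathsf{j_{*}})\gtrsim\log(4+eN/w)$ \emph{restricted to the non-vacuous case} $\mathsf{j_{*}}\le N$, handled in two regimes: when $L:=\log(4+eN/w)$ exceeds a threshold of order $\log(2c_0)$, the estimate $\log(eN/\mathsf{j_{*}})\ge L/(1+\log 4)-\log(2c_0)$ gives $\log(eN/\mathsf{j_{*}})\gtrsim L$; when $L$ is below that threshold, one instead uses $\log(eN/\mathsf{j_{*}})\ge 1$ (since $\mathsf{j_{*}}\le N$) together with $\mathsf{j_{*}}\ge c_0 w/L$, so $\mathsf{j_{*}}\log(eN/\mathsf{j_{*}})\gtrsim c_0 w/\log(2c_0)$. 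Either way the exponent grows without bound in $c_0$, and taking $c_0$ a sufficiently large absolute constant makes the $k=0$ term, and hence the geometric series, at most $\exp(-w)$. You should also say explicitly that when $\mathsf{j_{*}}>N$ the claim is vacuous; this is what makes the freedom to take $c_0$ large costless.
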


We apply Lemma \ref{lemma:order_stats} to derive the following corollary.

\begin{corollary}\label{coro:order_stats}
There exist two constants $c_1$ and $c_2$ depending only on $p$ such that the following holds. For $u>c_1$ and $s_0>1$, let $(j_s)_{s\ge s_0}$ be defined as in \eqref{eq:j_s}. Then, with probability at least $1-2\exp(-c_2 u^2 2^{s_0})$, the following holds: for every $f^{(k)}\in \F^{(k)} (1\le k\le p)$ and every $s\ge s_0$,
\[
\Bigg(\sum_{i=j_{s-1}}^{j_{s}-1}\bigg(\bigg(\prod_{k=1}^{p}\pi_{s} f^{(k)}\bigg)_i^{*}\bigg)^{2}\Bigg)^{1 / 2}\le 2 \bigg\|\prod_{k=1}^{p}\pi_s f^{(k)}\bigg\|_{L_4}  \Bigg(\sum_{i=j_{s-1}}^{j_{s}-1}\left(\frac{e N}{i}\right)^{3/4}\Bigg)^{1 / 2}.
\]
\end{corollary}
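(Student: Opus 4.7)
The plan is to apply Lemma \ref{lemma:order_stats} directly, with $Z := \prod_{k=1}^{p} \pi_s f^{(k)}$, combined with a union bound over $s \ge s_0$ and over the admissible-sequence product classes. The key point is that the threshold $\mathsf{j}_*$ that appears in Lemma \ref{lemma:order_stats} can be made to coincide exactly with $j_{s-1}$ from \eqref{eq:j_s} by choosing $w$ appropriately, and that the graded product functions $\prod_{k=1}^p \pi_s f^{(k)}$ form a class of manageable cardinality, since $|\F_s^{(k)}|\le 2^{2^s}$ for each $k$.

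Concretely, I would proceed as follows. First, fix $s \ge s_0$ and functions $f^{(k)}\in \F^{(k)}$, and set $Z = \prod_{k=1}^{p}\pi_s f^{(k)}$, which lies in $L_4$ (in fact in $L_{q}$ for any $q$ by the sub-Gaussian assumption and Hölder). Take $w = u^2 p\, 2^{s-1}$ in Lemma \ref{lemma:order_stats}; then
\[
\mathsf{j}_* = \left\lceil \frac{c_0 u^2 p \,2^{s-1}}{\log(4 + eN/(u^2 p\, 2^{s-1}))}\right\rceil = j_{s-1},
\]
so that on the event of Lemma \ref{lemma:order_stats}, $Z_i^* \le 2\|Z\|_{L_4}(eN/i)^{3/8}$ for every $i\ge j_{s-1}$. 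In particular this bound holds for every $j_{s-1}\le i\le j_s-1$. Squaring and summing over this range gives
\[
\sum_{i=j_{s-1}}^{j_s-1}\bigg(\bigg(\prod_{k=1}^{p}\pi_s f^{(k)}\bigg)_i^{*}\bigg)^{2} \le 4\bigg\|\prod_{k=1}^{p}\pi_s f^{(k)}\bigg\|_{L_4}^{\,2}\sum_{i=j_{s-1}}^{j_s-1}\left(\frac{eN}{i}\right)^{3/4},
\]
which after taking a square root yields the desired estimate.

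Next, I would apply a union bound. For each fixed $s$, the set of product functions $\prod_{k=1}^{p}\pi_s f^{(k)}$ has cardinality at most $\prod_{k=1}^p |\F_s^{(k)}| \le 2^{p 2^s}$. Thus the single-function failure probability $\exp(-w) = \exp(-u^2 p\, 2^{s-1})$ yields, after the union bound over all admissible $\pi_s f^{(k)}$ and all $s\ge s_0$, a total failure probability of at most
\[
\sum_{s\ge s_0} 2^{p 2^{s}}\exp(-u^2 p\,2^{s-1}) \le 2\exp(-c_2 u^2 2^{s_0}),
\]
where the last inequality holds for $u \ge c_1$ with $c_1, c_2$ depending only on $p$ (the exponent $u^2 p\, 2^{s-1} - (\log 2) p 2^s$ is at least $c_2 u^2 2^{s}$ once $u^2 \ge 4\log 2$, and the geometric decay in $s$ absorbs the series into the $s=s_0$ term).

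I do not expect a real obstacle here; the content is essentially bookkeeping, since Lemma \ref{lemma:order_stats} was tailored so that $\mathsf{j}_*$ matches exactly one of the levels $j_{s-1}$ in the chaining scale \eqref{eq:j_s}. The only delicate point is verifying that the constants line up so that a single $c_0$ can be used both in the definition of $j_s$ and in Lemma \ref{lemma:order_stats}, and that the union bound over $s \ge s_0$ can be summed geometrically for all $u$ exceeding a $p$-dependent threshold.
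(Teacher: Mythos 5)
Your proposal is correct and follows essentially the same route as the paper: apply Lemma \ref{lemma:order_stats} to $Z=\prod_{k=1}^p \pi_s f^{(k)}$ with $w=u^2p\,2^{s-1}$ so that $\mathsf{j}_*=j_{s-1}$, square and sum the resulting pointwise bound over $j_{s-1}\le i\le j_s-1$, then union bound over the at most $2^{p2^s}$ product functions and over $s\ge s_0$. The bookkeeping on the failure probability matches the paper's as well.
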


\begin{proof}[Proof of Corollary \ref{coro:order_stats}]
For every $s\ge s_0$ and every $f^{(k)}\in \F^{(k)} \,(1\le k\le p)$, we apply Lemma \ref{lemma:order_stats} to the random variable $Z=\prod_{k=1}^{p} \pi_s f^{(k)}$ with $w=u^2 p 2^{s-1}$. Note that $\mathsf{j_{*}}$ coincides with $j_{s-1}$ defined in \eqref{eq:j_s}. We thus obtain that, with probability at least $1-\exp(-u^2 p 2^{s-1})$, for every $i\ge j_{s-1}$,
\[
\bigg(\prod_{k=1}^{p}\pi_s f^{(k)}\bigg)_i^* \le 2 \bigg\|\prod_{k=1}^{p}\pi_s f^{(k)}\bigg\|_{L_4} \left(\frac{eN}{i}\right)^{3/8},
\]
which implies that
\[
\Bigg(\sum_{i=j_{s-1}}^{j_{s}-1}\bigg(\bigg(\prod_{k=1}^{p}\pi_{s} f^{(k)}\bigg)_i^{*}\bigg)^{2}\Bigg)^{1 / 2}\le 2 \bigg\|\prod_{k=1}^{p}\pi_s f^{(k)}\bigg\|_{L_4}  \bigg(\sum_{i=j_{s-1}}^{j_{s}-1}\left(\frac{e N}{i}\right)^{3/4}\bigg)^{1 / 2}.
\]
Since
\[
\bigg|\bigg\{\prod_{k=1}^{p}\pi_{s}f^{(k)}: f^{(k)}\in \F^{(k)} \bigg\} \bigg|\le \prod_{k=1}^{p}2^{2^{s}}= 2^{p2^{s}},
\]
we take a union bound over all $s\ge s_0$ and all different functions $\prod_{k=1}^{p}\pi_{s}f^{(k)}$ to obtain that, for $u\ge c_1$, with probability at least $1-\sum_{s\ge s_0}2^{p2^s}\exp(-u^2 p 2^{s-1}) \ge 1-2\exp(-c_2 u^2 2^{s_0})$, for every $f^{(k)}\in \F^{(k)} \, (1\le k\le p)$ and every $s\ge s_0$:
\[
\Bigg(\sum_{i=j_{s-1}}^{j_{s}-1}\bigg(\bigg(\prod_{k=1}^{p}\pi_{s} f^{(k)}\bigg)_i^{*}\bigg)^{2}\Bigg)^{1 / 2}\le 2 \bigg\|\prod_{k=1}^{p}\pi_s f^{(k)}\bigg\|_{L_4}  \bigg(\sum_{i=j_{s-1}}^{j_{s}-1}\left(\frac{e N}{i}\right)^{3/4}\bigg)^{1 / 2}.
\]
This completes the proof of Corollary \ref{coro:order_stats}.
\end{proof}

We are now ready to prove Theorem \ref{thm:key_lemma}.

\begin{proof}[Proof of Theorem \ref{thm:key_lemma}]
We will use the same notation as in the proof of Theorem \ref{thm:main2}.
Let $(\mcF^{(k)}_s)_{s\ge 0}$ be an admissible sequence of $\mcF^{(k)}.$ Let $u\ge 4$ and $s_0\ge 0$. For any $f^{(k)}\in \F^{(k)}$ and $s\ge 0$, let $\pi_s f^{(k)}$ denote the nearest point in $\F^{(k)}_s$ to $f^{(k)}$ with respect to the $(u^2 2^s)$-norm under the probability measure $\mu^{(k)}$.  Let $(j_{s})_{s\ge s_0}$ be the non-decreasing sequence of integers defined in \eqref{eq:j_s}, and notice that $1\le j_{s}\le N+1$ for $s \ge s_0$.  Set $s_1>s_0$ to be the first integer for which $j_{s}=N+1$. To ease notation, we will write $\|f^{(k)}\|_{\ell^N_{\infty}}:= \max_{1\le i\le N}|f^{(k)}(X_i^{(k)})| = 
\max_{1\le i\le N}|(f^{(k)})_i|.$

% For any $f^{(k)}\in \F^{(k)}$ and $s\ge 0$, let $\pi_s f^{(k)}$ denote the nearest point in $\F^{(k)}_s$ to $f^{(k)}$ with respect to the $(u^2 2^s)$-norm under the probability measure $\mu^{(k)}$. To ease notation, we write $(f^{(k)})_i := f^{(k)}(X_i^{(k)}) $, $(\pi_{s}f^{(k)})_i:=(\pi_{s}f^{(k)})(X_i^{(k)})$, $\Delta_s f^{(k)}:=\pi_{s+1} f^{(k)}-\pi_s f^{(k)}$, $(\Delta_s f^{(k)})_i := (\Delta_s f^{(k)})(X^{(k)}_i)$, and $\|f^{(k)}\|_{\ell^N_{\infty}}:=\max_{1\le i\le N}|(f^{(k)})_i|$. We use similar notation throughout the remainder of the proof. For a vector $x=(x_i)_{i=1}^{N}\in \R^{N}$, we denote by $\left(x_i^*\right)_{i=1}^N$  the non-increasing rearrangement of $\left(\left|x_i\right|\right)_{i=1}^N$.
 
Observe that $f^{(k)}=\sum_{s\ge s_1}\Delta_s f^{(k)}+\pi_{s_1}f^{(k)}$. We repeatedly apply triangle inequality
\begin{align*}   &\bigg(\sum_{i=1}^{N}\prod_{k=1}^{p} \left(f^{(k)}(X_i^{(k)})\right)^2\bigg)^{1/2}
\\&=\bigg(\sum_{i=1}^{N}\prod_{k=1}^{p} (f^{(k)})_i^2\bigg)^{1/2} \nonumber \\
    &\le \bigg(\sum_{i=1}^{N} (\pi_{s_1}f^{(1)})_i^2\prod_{k=2}^{p} (f^{(k)})_i^2\bigg)^{1/2}+\sum_{s\ge s_1}\bigg(\sum_{i=1}^{N} (\Delta_s f^{(1)})_i^2\prod_{k=2}^{p} (f^{(k)})_i^2\bigg)^{1/2}
  \nonumber \\ 
   &\le  \bigg(\sum_{i=1}^{N} (\pi_{s_1}f^{(1)})_i^2\prod_{k=2}^{p} (f^{(k)})_i^2\bigg)^{1/2}+\Bigg(\sum_{s\ge s_1}\bigg(\sum_{i=1}^{N} (\Delta_s f^{(1)})_i^2\bigg)^{1/2}\Bigg)\bigg(\prod_{k=2}^{p} \sup_{f^{(k)}\in \F^{(k)}} \|f^{(k)}\|_{\ell^N_{\infty}} \bigg)\nonumber\\
   &\le  \bigg(\sum_{i=1}^{N} (\pi_{s_1}f^{(1)})_i^2 (\pi_{s_1}f^{(2)})_i^2\prod_{k=3}^{p} (f^{(k)})_i^2\bigg)^{1/2}+\sum_{s\ge s_1} \bigg(\sum_{i=1}^{N}(\pi_{s_1}f^{(1)})_{i}^2 (\Delta_s f^{(2)})_i^2 \prod_{k=3}^{p} (f^{(k)})_{i}^{2} \bigg)^{1/2}\nonumber\\
   &\quad +\Bigg(\sum_{s\ge s_1}\bigg(\sum_{i=1}^{N} (\Delta_s f^{(1)})_i^2\bigg)^{1/2}\Bigg)\bigg(\prod_{k=2}^{p} \sup_{f^{(k)}\in \F^{(k)}} \|f^{(k)}\|_{\ell^N_{\infty}} \bigg)\nonumber\\
   &\le \bigg(\sum_{i=1}^{N} (\pi_{s_1}f^{(1)})_i^2 (\pi_{s_1}f^{(2)})_i^2\prod_{k=3}^{p} (f^{(k)})_i^2\bigg)^{1/2}+ \Bigg(\sum_{s\ge s_1}\bigg(\sum_{i=1}^{N} (\Delta_s f^{(2)})_i^2\bigg)^{1/2}\Bigg)\bigg(\prod_{k\ne 2} \sup_{f^{(k)}\in \F^{(k)}} \|f^{(k)}\|_{\ell^N_{\infty}} \bigg)\nonumber\\
   &\quad +\Bigg(\sum_{s\ge s_1}\bigg(\sum_{i=1}^{N} (\Delta_s f^{(1)})_i^2\bigg)^{1/2}\Bigg)\bigg(\prod_{k=2}^{p} \sup_{f^{(k)}\in \F^{(k)}} \|f^{(k)}\|_{\ell^N_{\infty}} \bigg) \nonumber\\
   &=  \bigg(\sum_{i=1}^{N} (\pi_{s_1}f^{(1)})_i^2 (\pi_{s_1}f^{(2)})_i^2\prod_{k=3}^{p} (f^{(k)})_i^2\bigg)^{1/2}+\sum_{k=1}^{2}\Bigg(\sum_{s\ge s_1}\bigg(\sum_{i=1}^{N} (\Delta_s f^{(k)})_i^2\bigg)^{1/2}\Bigg)\bigg(\prod_{k'\ne k} \sup_{f^{(k')}\in \F^{(k')}} \|f^{(k')}\|_{\ell^N_{\infty}} \bigg).\nonumber 
\end{align*}

Iterating this argument, we deduce that 
\begin{align}\label{eq:key_lemma_aux1}
&\bigg(\sum_{i=1}^{N}\prod_{k=1}^{p} \left(f^{(k)}(X_i^{(k)})\right)^2\bigg)^{1/2} \nonumber\\
&\le \underbrace{\bigg(\sum_{i=1}^{N}\prod_{k=1}^{p}(\pi_{s_1}f^{(k)})_i^2\bigg)^{1/2}}_{=:T_1}+\underbrace{\sum_{k=1}^{p}\Bigg(\sum_{s\ge s_1} \bigg(\sum_{i=1}^{N} (\Delta_s f^{(k)})^2_i\bigg)^{1/2}\Bigg)\bigg(\prod_{k'\ne k}\sup_{f^{(k')}\in \F^{(k')}} \|f^{(k')}\|_{\ell^N_{\infty}}\bigg)}_{=:T_2}.
\end{align}

% \begin{align*}
% \bigg(\sum_{i=1}^{N}v^2_i w^2_i\bigg)^{1/2}&\le \bigg(\sum_{i=1}^{N} (\pi_{s_1} v)^2_i w^2_i\bigg)^{1/2}+\sum_{s\ge s_1} \bigg(\sum_{i=1}^{N} (\Delta_s v)^2_i w^2_i\bigg)^{1/2}\\
% &\le \bigg(\sum_{i=1}^{N} (\pi_{s_1} v)^2_i (\pi_{s_1 }w)^2_i\bigg)^{1/2} +\sum_{s\ge s_1}\bigg(\sum_{i=1}^{N} (\pi_{s_1} v)^2_i (\Delta_s w)^2_i\bigg)^{1/2}+\sum_{s\ge s_1} \bigg(\sum_{i=1}^{N} (\Delta_s v)^2_i w^2_i\bigg)^{1/2}\\
% &\le \bigg(\sum_{i=1}^{N} (\pi_{s_1} v)^2_i (\pi_{s_1 }w)^2_i\bigg)^{1/2}+ \bigg(\sup_{v\in V}\|v\|_{\ell^N_\infty}\bigg)\bigg(\sum_{s\ge s_1}\|\Delta_s w\|_{\ell^N_2}\bigg)+ \bigg(\sup_{w\in W}\|w\|_{\ell^N_\infty}\bigg)\bigg(\sum_{s\ge s_1}\|\Delta_s v\|_{\ell^N_2}\bigg).
% \end{align*}

Now we analyze the term $T_1$. Let $I_{s_1-1}$ be the set of the $j_{s_1-1}-1$ largest coordinates of $(|\prod_{k=1}^{p}(\pi_{s_1} f^{(k)})_{i}|)_{i=1}^{N}$. By triangle inequality,
\begin{align*}
   T_1&= \bigg(\sum_{i=1}^{N}\prod_{k=1}^{p}(\pi_{s_1}f^{(k)})_i^2\bigg)^{1/2} \\
   &\le \bigg(\sum_{i\in I_{s_1-1}^c} \prod_{k=1}^{p}(\pi_{s_1}f^{(k)})_i^2\bigg)^{1/2}+\bigg(\sum_{i\in I_{s_1-1}} \prod_{k=1}^{p}(\pi_{s_1}f^{(k)})_i^2\bigg)^{1/2}\\
    &\le \bigg(\sum_{i\in I_{s_1-1}^c} \prod_{k=1}^{p}(\pi_{s_1}f^{(k)})_i^2\bigg)^{1/2}+\Bigg(\sum_{i\in I_{s_1-1}} \bigg(\prod_{k=1}^{p}(\pi_{s_1}f^{(k)})_i-\prod_{k=1}^{p}(\pi_{s_1-1}f^{(k)})_i\bigg)^2\Bigg)^{1/2}\\
    &\quad +\bigg(\sum_{i\in I_{s_1-1}} \prod_{k=1}^{p}(\pi_{s_1-1}f^{(k)})_i^2\bigg)^{1/2}.
\end{align*}

Using the telescoping sum $a_1 \cdots a_{p}-b_1 \cdots b_{p}=\sum_{k=1}^{p} a_1 \cdots a_{k-1}\left(a_k-b_k\right) b_{k+1} \cdots b_{p}$ and triangle inequality, we have
\begin{align*}
&\Bigg(\sum_{i\in I_{s_1-1}} \bigg(\prod_{k=1}^{p}(\pi_{s_1}f^{(k)})_i-\prod_{k=1}^{p}(\pi_{s_1-1}f^{(k)})_i\bigg)^2\Bigg)^{1/2}\\
&\le \sum_{k=1}^{p} \bigg(\sum_{i\in I_{s_1-1}} (\pi_{s_1} f^{(1)})_i^2\cdots (\pi_{s_1} f^{(k-1)})_i^2(\Delta_{s_1-1} f^{(k)})_i^2 (\pi_{s_1-1} f^{(k+1)})_i^2\cdots (\pi_{s_1-1} f^{(p)})_i^2\bigg)^{1/2}\\
&\le \sum_{k=1}^{p} \bigg(\sum_{i\in I_{s_1-1}} (\Delta_{s_1-1} f^{(k)})_i^2 \bigg)^{1/2} \|\pi_{s_1}f^{(1)}\|_{\ell_{\infty}^N}\cdots \|\pi_{s_1}f^{(k-1)}\|_{\ell_{\infty}^N} \|\pi_{s_1-1}f^{(k+1)}\|_{\ell_{\infty}^N}\cdots \|\pi_{s_1-1}f^{(p)}\|_{\ell_{\infty}^N} \\
&\le \sum_{k=1}^{p} \bigg(\sum_{i\in I_{s_1-1}}(\Delta_{s_1-1} f^{(k)})_i^2\bigg)^{1/2}\bigg(\prod_{k'\ne k}\sup_{f^{(k')}\in \F^{(k')}} \|f^{(k')}\|_{\ell^N_{\infty}}\bigg)\\
&\le \sum_{k=1}^{p} \bigg(\sum_{i<j_{s_1-1}}\left(\left(\Delta_{s_1-1} f^{(k)}\right)_i^*\right)^2\bigg)^{1/2}\bigg(\prod_{k'\ne k}\sup_{f^{(k')}\in \F^{(k')}} \|f^{(k')}\|_{\ell^N_{\infty}}\bigg).
\end{align*}
Moreover, since 
\begin{align*}
   \bigg(\sum_{i\in I_{s_1-1}^c} \prod_{k=1}^{p}(\pi_{s_1}f^{(k)})_i^2\bigg)^{1/2} &= \Bigg(\sum_{i\ge j_{s_1-1}}\bigg(\bigg(\prod_{k=1}^{p}\pi_{s_1}f^{(k)}\bigg)^*_i\bigg)^2\Bigg)^{1/2}
   \end{align*}
   and
   \begin{align*}
\bigg(\sum_{i\in I_{s_1-1}} \prod_{k=1}^{p}(\pi_{s_1-1}f^{(k)})_i^2\bigg)^{1/2} &\le \max_{|J|=j_{s_1-1}-1}\bigg(\sum_{j\in J} \prod_{k=1}^{p}(\pi_{s_1-1}f^{(k)})_j^2\bigg)^{1/2},  
\end{align*}
it follows that
\begin{align*}
    T_1&= \bigg(\sum_{i=1}^{N}\prod_{k=1}^{p}(\pi_{s_1}f^{(k)})_i^2\bigg)^{1/2}\\
    &\overset{\text{$(\star)$}}{=}\max_{|I|=j_{s_1}-1}\bigg(\sum_{i\in I} \prod_{k=1}^{p}(\pi_{s_1}f^{(k)})_i^2\bigg)^{1/2}\\
    &\le \Bigg(\sum_{i\ge j_{s_1-1}}\bigg(\bigg(\prod_{k=1}^{p}\pi_{s_1}f^{(k)}\bigg)^*_i\bigg)^2\Bigg)^{1/2}+\sum_{k=1}^{p} \bigg(\sum_{i<j_{s_1-1}}\left(\left(\Delta_{s_1-1} f^{(k)}\right)_i^*\right)^2\bigg)^{1 / 2}\bigg(\prod_{k'\ne k}\sup_{f^{(k')}\in \F^{(k')}} \|f^{(k')}\|_{\ell^N_{\infty}}\bigg)\\
    &\quad +\max_{|I|=j_{s_1-1}-1}\bigg(\sum_{i\in I} \prod_{k=1}^{p}(\pi_{s_1-1}f^{(k)})_i^2\bigg)^{1/2},
\end{align*}
where  $(\star)$ follows by the definition of $s_1$, i.e. $j_{s_1}=N+1$.

Repeating this argument for $s_0\le s\le s_1-2$, it follows that for every $f^{(k)}\in \F^{(k)}$, $1\le k\le p$, and for the choice of $I_s$ as the set of $j_s-1$ largest coordinates of $(|\prod_{k=1}^{p}(\pi_{s+1} f^{(k)})_{i}|)_{i=1}^{N}$,
\begin{align*}
    &\max_{|I|=j_{s+1}-1}\bigg(\sum_{i\in I}\prod_{k=1}^{p}(\pi_{s+1} f^{(k)})^2_{i}\bigg)^{1/2}
    = \Bigg(\sum_{i=1}^{j_{s+1}-1}\bigg(\bigg(\prod_{k=1}^{p} \pi_{s+1} f^{(k)}\bigg)_i^*\bigg)^2\Bigg)^{1/2} \\
    &\le \Bigg(\sum_{i=j_{s}}^{j_{s+1}-1}\bigg(\bigg(\prod_{k=1}^{p}\pi_{s+1} f^{(k)}\bigg)_i^{*}\bigg)^{2}\Bigg)^{1 / 2} +\bigg(\sum_{i\in I_s}\prod_{k=1}^{p}(\pi_{s+1} f^{(k)})^2_{i}\bigg)^{1/2}\\
    &\le \Bigg(\sum_{i=j_{s}}^{j_{s+1}-1}\bigg(\bigg(\prod_{k=1}^{p}\pi_{s+1} f^{(k)}\bigg)_i^{*}\bigg)^{2}\Bigg)^{1 / 2}+\Bigg(\sum_{i\in I_{s}} \bigg(\prod_{k=1}^{p}(\pi_{s+1}f^{(k)})_i-\prod_{k=1}^{p}(\pi_{s}f^{(k)})_i\bigg)^2\Bigg)^{1/2}\\
    &\quad + \bigg(\sum_{i\in I_s}\prod_{k=1}^{p}(\pi_{s} f^{(k)})^2_{i}\bigg)^{1/2} \\
    &\le \Bigg(\sum_{i=j_{s}}^{j_{s+1}-1}\bigg(\bigg(\prod_{k=1}^{p}\pi_{s+1} f^{(k)}\bigg)_i^{*}\bigg)^{2}\Bigg)^{1 / 2}+ \sum_{k=1}^{p} \bigg(\sum_{i<j_{s}}\left(\left(\Delta_{s} f^{(k)}\right)_i^*\right)^2\bigg)^{1 / 2}\bigg(\prod_{k'\ne k}\sup_{f^{(k')}\in \F^{(k')}} \|f^{(k')}\|_{\ell^N_{\infty}}\bigg)\\
    &\quad + \max_{|I|=j_{s}-1}\bigg(\sum_{i\in I}\prod_{k=1}^{p}(\pi_{s} f^{(k)})^2_{i}\bigg)^{1/2}.
\end{align*}

Consequently, we express $T_1$ as a telescoping sum, from which we obtain that
\begin{align*}
T_1 &=\max_{|I|=j_{s_1}-1}\bigg(\sum_{i\in I} \prod_{k=1}^{p}(\pi_{s_1}f^{(k)})_i^2\bigg)^{1/2}\\
&=\sum_{s=s_0}^{s_1-1} \left( \max_{|I|=j_{s+1}-1}\bigg(\sum_{i\in I}\prod_{k=1}^{p}(\pi_{s+1} f^{(k)})^2_{i}\bigg)^{1/2}- \max_{|I|=j_{s}-1}\bigg(\sum_{i\in I}\prod_{k=1}^{p}(\pi_{s} f^{(k)})^2_{i}\bigg)^{1/2}\right)\\
&\quad +  \max_{|I|=j_{s_0}-1}\bigg(\sum_{i\in I}\prod_{k=1}^{p}(\pi_{s_0} f^{(k)})^2_{i}\bigg)^{1/2} \\
& \le \Bigg(\sum_{i\ge j_{s_1-1}}\bigg(\bigg(\prod_{k=1}^{p}\pi_{s_1}f^{(k)}\bigg)^*_i\bigg)^2\Bigg)^{1/2}+\sum_{s=s_0}^{s_1-2} \Bigg(\sum_{i=j_{s}}^{j_{s+1}-1}\bigg(\bigg(\prod_{k=1}^{p}\pi_{s+1} f^{(k)}\bigg)_i^{*}\bigg)^{2}\Bigg)^{1 / 2}\\
&\quad + \sum_{s=s_0}^{s_1-1}\sum_{k=1}^{p} \bigg(\sum_{i<j_{s}}\left(\left(\Delta_{s} f^{(k)}\right)_i^*\right)^2\bigg)^{1 / 2}\bigg(\prod_{k'\ne k}\sup_{f^{(k')}\in \F^{(k')}} \|f^{(k')}\|_{\ell^N_{\infty}}\bigg) \\
&\quad + \max_{|I|=j_{s_0}-1}\bigg(\sum_{i\in I}\prod_{k=1}^{p}(\pi_{s_0} f^{(k)})^2_{i}\bigg)^{1/2}.
\end{align*}
% Recall that for every $1\le k\le p$ and every $s\ge s_0$, we have
% \[
% \bigg(\sum_{i<j_s} \left((\pi_{s} f^{(k)})^{*}_{i}\right)^2\bigg)^{1/2}\le \|\pi_{s} f^{(k)}\|_{[s]}.
% \]
Notice that
\begin{align*}
\max_{|I|=j_{s_0}-1}\bigg(\sum_{i\in I}\prod_{k=1}^{p}(\pi_{s_0} f^{(k)})^2_{i}\bigg)^{1/2}&= \Bigg(\sum_{i<j_{s_0}}\bigg(\bigg(\prod_{k=1}^{p}\pi_{s_0} f^{(k)}\bigg)_i^*\bigg)^2\Bigg)^{1/2}\\
&\le \bigg(\sum_{i<j_{s_0}}\left(\left(\pi_{s_0} f^{(1)}\right)_i^*\right)^2\bigg)^{1/2}\bigg(\prod_{k'\ne 1}\sup_{f^{(k')}\in \F^{(k')}} \|f^{(k')}\|_{\ell^N_{\infty}}\bigg)\\
& \le \sum_{k=1}^{p} \bigg(\sum_{i<j_{s_0}}\left(\left(\pi_{s_0} f^{(k)}\right)_i^*\right)^2\bigg)^{1/2}\bigg(\prod_{k'\ne k}\sup_{f^{(k')}\in \F^{(k')}} \|f^{(k')}\|_{\ell^N_{\infty}}\bigg), 
\end{align*}
% \begin{align*}
% \max_{|I|=j_{s_0}-1}\bigg(\sum_{i\in I}\prod_{k=1}^{p}(\pi_{s_0} f^{(k)})^2_{i}\bigg)^{1/2}\le \min_{1\le k\le p}\bigg( \|\pi_{s_0} f^{(k)}\|_{[s_0]} \prod_{k'\ne k} d_{\infty}(f^{(k')})\bigg)\le \sum_{k=1}^{p} \|\pi_{s_0} f^{(k)}\|_{[s_0]} \prod_{k'\ne k} d_{\infty}(f^{(k')}).
% \end{align*}
and so 
\begin{align}\label{eq:key_lemma_aux2}
T_1& \le \Bigg(\sum_{i\ge j_{s_1-1}}\bigg(\bigg(\prod_{k=1}^{p}\pi_{s_1}f^{(k)}\bigg)^*_i\bigg)^2\Bigg)^{1/2}+\sum_{s=s_0}^{s_1-2} \Bigg(\sum_{i=j_{s}}^{j_{s+1}-1}\bigg(\bigg(\prod_{k=1}^{p}\pi_{s+1} f^{(k)}\bigg)_i^{*}\bigg)^{2}\Bigg)^{1 / 2} \nonumber\\
&\quad + \sum_{k=1}^{p} \Bigg(\sum_{s=s_0}^{s_1-1} \bigg(\sum_{i<j_{s}}\left(\left(\Delta_{s} f^{(k)}\right)_i^*\right)^2\bigg)^{1 / 2}+\bigg(\sum_{i<j_{s_0}}\left(\left(\pi_{s_0} f^{(k)}\right)_i^*\right)^2\bigg)^{1/2}\Bigg)\bigg(\prod_{k'\ne k}\sup_{f^{(k')}\in \F^{(k')}} \|f^{(k')}\|_{\ell^N_{\infty}}\bigg).
% &\le d(f^{(1)},\ldots,f^{(p)}) \bigg(\sqrt{N}+\sum_{s=s_0}^{s_1-1}\nu_{s}\bigg)+\sum_{s=s_0}^{s_1-1}\sum_{k=1}^{p} \|\Delta_{s} f^{(k)}\|_{[s]}\bigg(\prod_{k'\ne k} d_{\infty} (f^{(k')})\bigg)\\
% &\quad + \max_{|I|=j_{s_0}-1}\bigg(\sum_{i\in I}\prod_{k=1}^{p}(\pi_{s_0} f^{(k)})^2_{i}\bigg)^{1/2}.
% &=d(f^{(1)},\ldots,f^{(p)}) \bigg(\sqrt{N}+\sum_{s=s_0}^{s_1-1}\nu_{s}\bigg)+\sum_{k=1}^{p} \bigg(\sum_{s=s_0}^{s_1-1}\|\Delta_{s} f^{(k)}\|_{[s]}\bigg)\bigg(\prod_{k'\ne k} d_{\infty} (f^{(k')})\bigg)\\
% &\quad +\max_{|I|=j_{s_0}-1}\bigg(\sum_{i\in I}\prod_{k=1}^{p}(\pi_{s_0} f^{(k)})^2_{i}\bigg)^{1/2}.
\end{align}
Combining \eqref{eq:key_lemma_aux1} and \eqref{eq:key_lemma_aux2} gives that
\begin{align*}
   &\bigg(\sum_{i=1}^{N}\prod_{k=1}^{p} (f^{(k)})_i^2\bigg)^{1/2} \le T_1+T_2\\
    &\le \Bigg(\sum_{i\ge j_{s_1-1}}\bigg(\bigg(\prod_{k=1}^{p}\pi_{s_1}f^{(k)}\bigg)^*_i\bigg)^2\Bigg)^{1/2}+\sum_{s=s_0}^{s_1-2} \Bigg(\sum_{i=j_{s}}^{j_{s+1}-1}\bigg(\bigg(\prod_{k=1}^{p}\pi_{s+1} f^{(k)}\bigg)_i^{*}\bigg)^{2}\Bigg)^{1 / 2}\\
    &\quad + \sum_{k=1}^{p} \Bigg(\sum_{s=s_0}^{s_1-1} \bigg(\sum_{i<j_{s}}\left(\left(\Delta_{s} f^{(k)}\right)_i^*\right)^2\bigg)^{1 / 2}+\bigg(\sum_{i<j_{s_0}}\left(\left(\pi_{s_0} f^{(k)}\right)_i^*\right)^2\bigg)^{1/2}\Bigg)\bigg(\prod_{k'\ne k}\sup_{f^{(k')}\in \F^{(k')}} \|f^{(k')}\|_{\ell^N_{\infty}}\bigg)\\
    &\quad + \sum_{k=1}^{p}\Bigg(\sum_{s\ge s_1} \bigg(\sum_{i=1}^{N} (\Delta_s f^{(k)})^2_i\bigg)^{1/2}\Bigg)\bigg(\prod_{k'\ne k}\sup_{f^{(k')}\in \F^{(k')}} \|f^{(k')}\|_{\ell^N_{\infty}}\bigg)\\
    &=  \Bigg(\sum_{i\ge j_{s_1-1}}\bigg(\bigg(\prod_{k=1}^{p}\pi_{s_1}f^{(k)}\bigg)^*_i\bigg)^2\Bigg)^{1/2}+\sum_{s=s_0}^{s_1-2} \Bigg(\sum_{i=j_{s}}^{j_{s+1}-1}\bigg(\bigg(\prod_{k=1}^{p}\pi_{s+1} f^{(k)}\bigg)_i^{*}\bigg)^{2}\Bigg)^{1 / 2}\\
    &\quad +\sum_{k=1}^{p} \Bigg(\sum_{s\ge s_0} \bigg(\sum_{i<j_{s}}\left(\left(\Delta_{s} f^{(k)}\right)_i^*\right)^2\bigg)^{1 / 2}+\bigg(\sum_{i<j_{s_0}}\left(\left(\pi_{s_0} f^{(k)}\right)_i^*\right)^2\bigg)^{1/2}\Bigg)\bigg(\prod_{k'\ne k}\sup_{f^{(k')}\in \F^{(k')}} \|f^{(k')}\|_{\ell^N_{\infty}}\bigg).
\end{align*}

% \begin{align}\label{eq:keylemma_aux_2}
% \bigg(\sum_{i=1}^{N}\prod_{k=1}^{p}(\pi_{s_1}f^{(k)})_i^2\bigg)^{1/2}
% &\le d(f^{(1)},\ldots,f^{(p)}) \bigg(\sqrt{N}+\sum_{s=s_0}^{s_1-1}\nu_{s}\bigg)\nonumber \\
% &\quad +\sum_{k=1}^{p} \bigg(\sum_{s=s_0}^{s_1-1}\|\Delta_{s} f^{(k)}\|_{[s]}+\|\pi_{s_0} f^{(k)}\|_{[s_0]}\bigg)\bigg(\prod_{k'\ne k} d_{\infty} (f^{(k')})\bigg).
% \end{align}

% Combine \eqref{eq:keylemma_aux_1} and \eqref{eq:keylemma_aux_2} gives  

% \begin{align*}
%     & \bigg(\sum_{i=1}^{N}\prod_{k=1}^{p} (f^{(k)})_i^2\bigg)^{1/2}\le \bigg(\sum_{i=1}^{N}\prod_{k=1}^{p}(\pi_{s_1}f^{(k)})_i^2\bigg)^{1/2}+\sum_{k=1}^{p}\bigg(\sum_{s\ge s_1}\|\Delta_s f^{(k)}\|_{[s]}\bigg)\bigg(\prod_{k'\ne k}d_{\infty}(f^{(k')})\bigg)\\
%      &\le d(f^{(1)},\ldots,f^{(p)}) \bigg(\sqrt{N}+\sum_{s=s_0}^{s_1-1}\nu_{s}\bigg)+\sum_{k=1}^{p}\bigg(\sum_{s\ge s_0}\|\Delta_s f^{(k)}\|_{[s]}+\|\pi_{s_0} f^{(k)}\|_{[s_0]}\bigg)\bigg(\prod_{k'\ne k}d_{\infty}(f^{(k')})\bigg)\\
%      &= d(f^{(1)},\ldots,f^{(p)}) \bigg(\sqrt{N}+\sum_{s=s_0}^{s_1-1}\nu_{s}\bigg)+\sum_{k=1}^{p} \Lambda(f^{(k)})\bigg(\prod_{k'\ne k}d_{\infty}(f^{(k')})\bigg).
% \end{align*}

% \paragraph{Sub-Gaussian distribution}

With the choice of $(j_{s})_{s\ge s_0}$ in \eqref{eq:j_s}, Corollary \ref{coro:1} gives that with probability at least $1-2\exp(-c_1u^2 2^{s_0})$, for every $1\le k\le p$, every $f^{(k)}\in \F^{(k)}$, and every $s\ge s_0$,
\begin{align*}
\bigg(\sum_{i<j_{s}}\left(\left(\Delta_{s} f^{(k)}\right)_i^*\right)^2\bigg)^{1 / 2}
&\lesssim  u 2^{s / 2} \|\Delta_s f^{(k)}\|_{(c_3 u^2 2^s)},\\ 
\bigg(\sum_{i<j_{s}}\left(\left(\pi_{s} f^{(k)}\right)_i^*\right)^2\bigg)^{1 / 2}&\lesssim  u 2^{s / 2}\|\pi_s f^{(k)}\|_{(c_3 u^2 2^s)},\\
\Bigg( \sum_{i\ge j_s}  \bigg(\bigg( \prod_{k=1}^{p}\pi_{s+1} f^{(k)} \bigg)^*_{i}\bigg)^2\Bigg)^{1/2} &  \lesssim N^{1/2} \bigg\|\prod_{k=1}^{p}\pi_{s+1} f^{(k)}\bigg\|_{L_4}.
\end{align*}
By Lemma \ref{lemma:infinity_norm_bound} and a union bound, with probability at least $1-p\exp(-u^2)$, for every $1\le k \le p$,
\[
\sup_{f^{(k)}\in \F^{(k)}} \|f^{(k)}\|_{\ell^N_{\infty}}\lesssim \gamma(\F^{(k)},\psi_2)+d_{\psi_2}(\F^{(k)}) (\sqrt{\log N}+u).
\]
Moreover, by Corollary \ref{coro:order_stats}, with probability at least $1-2\exp(-c_1 u^2 2^{s_0})$, for every $f^{(k)}\in \F^{(k)} \, (1\le k\le p)$ and every $s\ge s_0$,
\[
\Bigg(\sum_{i=j_{s-1}}^{j_{s}-1}\bigg(\bigg(\prod_{k=1}^{p}\pi_{s} f^{(k)}\bigg)_i^{*}\bigg)^{2}\Bigg)^{1 / 2}\lesssim \bigg\|\prod_{k=1}^{p}\pi_s f^{(k)}\bigg\|_{L_4}  \Bigg(\sum_{i=j_{s-1}}^{j_{s}-1}\left(\frac{e N}{i}\right)^{3/4}\Bigg)^{1 / 2}.
\]
Hence, it holds with probability $1-c_1(p)\exp(-c_2(p) u^2 2^{s_0})$ that, for every $f^{(k)}\in \F^{(k)} (1\le k\le p)$:
\begin{align}\label{eq:key_lemma_aux_special}
   &\bigg(\sum_{i=1}^{N}\prod_{k=1}^{p} (f^{(k)})_i^2\bigg)^{1/2}\nonumber\\
   &\le  \Bigg(\sum_{i\ge j_{s_1-1}}\bigg(\bigg(\prod_{k=1}^{p}\pi_{s_1}f^{(k)}\bigg)^*_i\bigg)^2\Bigg)^{1/2}+\sum_{s=s_0}^{s_1-2} \Bigg(\sum_{i=j_{s}}^{j_{s+1}-1}\bigg(\bigg(\prod_{k=1}^{p}\pi_{s+1} f^{(k)}\bigg)_i^{*}\bigg)^{2}\Bigg)^{1 / 2}\nonumber\\
    &\quad +\sum_{k=1}^{p} \Bigg(\sum_{s\ge s_0} \bigg(\sum_{i<j_{s}}\left(\left(\Delta_{s} f^{(k)}\right)_i^*\right)^2\bigg)^{1 / 2}+\bigg(\sum_{i<j_{s_0}}\left(\left(\pi_{s_0} f^{(k)}\right)_i^*\right)^2\bigg)^{1/2}\Bigg)\bigg(\prod_{k'\ne k}\sup_{f^{(k')}\in \F^{(k')}} \|f^{(k')}\|_{\ell^N_{\infty}}\bigg) \nonumber\\
    &\lesssim_p N^{1/2} \bigg\|\prod_{k=1}^{p}\pi_{s_1} f^{(k)}\bigg\|_{L_4}+\sum_{s=s_0}^{s_1-2}  \bigg\|\prod_{k=1}^{p}\pi_{s+1} f^{(k)}\bigg\|_{L_4}  \bigg(\sum_{i=j_{s}}^{j_{s+1}-1}\left(\frac{e N}{i}\right)^{3/4}\bigg)^{1 / 2}\nonumber\\
    &\quad + \sum_{k=1}^{p} u\bigg(\sum_{s\ge s_0}  2^{s/2} \|\Delta_s f^{(k)}\|_{(c_3 u^2 2^s)}+ 2^{s_0/2} \|\pi_{s_0}f^{(k)}\|_{(c_3 u^2 2^{s_0})}\bigg)\bigg(\prod_{k'\ne k}\sup_{f^{(k')}\in \F^{(k')}} \|f^{(k')}\|_{\ell^N_{\infty}}\bigg)\\
    &\lesssim_p N^{1/2} \bigg(\prod_{k=1}^{p} d_{\psi_2}(\F^{(k)})\bigg)+u\sum_{k=1}^{p} \widetilde{\Lambda}_{s_0, u}(\F^{(k)}) \bigg(\prod_{k'\ne k} \Big(\gamma(\F^{(k')},\psi_2)+d_{\psi_2}(\F^{(k')}) (\sqrt{\log N} +u \Big) \bigg)\nonumber,
\end{align}
where in the last step we have used the definition of the graded $\gamma$-type functional $\widetilde{\Lambda}_{s_0, u}(\F^{(k)}) $ and the inequalities
\[
\bigg\|\prod_{k=1}^{p}\pi_{s} f^{(k)}\bigg\|_{L_4}\le \prod_{k=1}^{p}  \|\pi_{s} f^{(k)}\|_{L_{4p}}\lesssim_p \prod_{k=1}^{p} d_{\psi_2}(\F^{(k)})
\]
and 
\begin{align}\label{eq:key_lemma_aux3}
\sum_{s=s_0}^{s_1-2}\Bigg(\sum_{i=j_{s}}^{j_{s+1}-1}\left(\frac{e N}{i}\right)^{3/4}\Bigg)^{1 / 2}\lesssim N^{1/2}.
\end{align}
The inequality \eqref{eq:key_lemma_aux3} follows from the facts that $j_{s_1}=N+1$ and $j_{s}>2j_{s-1}$ whenever $1<j_{s-1}, j_{s}<N+1$, as can be seen from \eqref{eq:j_s}. Indeed, we estimate:
\begin{align*}
\sum_{s=s_0}^{s_1-2}\bigg(\sum_{i=j_{s}}^{j_{s+1}-1} \left(\frac{eN}{i}\right)^{3/4}\bigg)^{1/2}&\le (eN)^{3/8} \sum_{s=s_0}^{s_1-2} 2\left((j_{s+1}-1)^{1/4}-(j_{s}-1)^{1/4}\right)^{1/2}
 \lesssim N^{3/8} \sum_{s=s_0}^{s_1-2}(j_{s+1}-1)^{1/8}\\
 &\lesssim N^{3/8} (j_{s_1-1}-1)^{1/8}\bigg(1+\sum_{s\ge 1}2^{-s/8}\bigg) \lesssim N^{1/2}.
 \end{align*}

Taking $s_0\asymp 1$, we have
\[
\widetilde{\Lambda}_{s_0, u}(\F^{(k)})= \Lambda_{s_0, u}(\F^{(k)}) + 2^{s_0/2}\sup_{f^{(k)} \in \F^{(k)}}\left\|\pi_{s_0} f^{(k)}\right\|_{(u^2 2^{s_0})}  \lesssim \gamma(\F^{(k)},\psi_2)+2^{s_0/2} d_{\psi_2}(\F^{(k)})\lesssim \gamma(\F^{(k)},\psi_2).
\]
Consequently, it holds with probability at least $1-c_1(p) \exp(-c_2(p)u^2)$ that
\begin{align*}
 &\sup_{f^{(k)}\in \F^{(k)},1\le k\le p} \bigg(\sum_{i=1}^{N}\prod_{k=1}^{p} \left(f^{(k)}(X_i^{(k)})\right)^2\bigg)^{1/2}=\sup_{f^{(k)}\in \F^{(k)},1\le k\le p} \bigg(\sum_{i=1}^{N}\prod_{k=1}^{p} (f^{(k)})_i^2\bigg)^{1/2}\\
 &\lesssim_p  \bigg(\prod_{k=1}^{p} d_{\psi_2}(\F^{(k)})\bigg)  \bigg(\sqrt{N}+u\sum_{k=1}^{p} \bar{\gamma}(\F^{(k)},\psi_2)\prod_{k'\ne k}\Big(\bar{\gamma}(\F^{(k')},\psi_2)+(\log N)^{1/2}+u\Big)\bigg),
\end{align*}
where $\bar{\gamma}(\F^{(k)},\psi_2):=\gamma(\F^{(k)},\psi_2)/d_{\psi_2}(\F^{(k)})$ for $1\le k\le p$. This completes the proof.
\end{proof}

\subsection{Log-concave ensembles}\label{subsec:log-concave}

 In this subsection, we briefly outline how the proofs of Theorem \ref{thm:main1} and Theorem \ref{thm:main2} can be modified to establish the result stated in Remark \ref{rem:log_concave} for isotropic, unconditional, log-concave ensembles.

For integer $p\ge 2$ and $1 \leq k \leq p$, let $X^{(k)}, X_1^{(k)}, \ldots, X_N^{(k)}$ be i.i.d. isotropic, unconditional, log-concave random vectors in $\R^{d_k}$. For any function $f$ defined on $(\Omega,\mu)$, we introduce the Orlicz $\psi_1$-norm of $f$ given by
\[
\|f\|_{\psi_1}:=\inf \left\{c>0: 
\mathbb{E}_{X \sim \mu}\insquare{ \exp \inparen{\frac{|f(X)|}{c}}} \leq 2\right\},
\]
with the convention $\inf \emptyset = \infty.$  We define $
d_{\psi_1}(\F) := \sup_{f \in \F}\|f\|_{\psi_1}$. 

Recall that in the proof of Theorem \ref{thm:main2}, we established in \eqref{eq:main2_aux_special} that it holds with probability at least $1-2\exp(-u^2)$ that
\begin{align*}
    \bigg|\sum_{i=1}^N \varepsilon_i \prod_{k=1}^{p} f^{(k)} (X^{(k)}_i)\bigg| 
    &\lesssim_p \sum_{k=1}^{p}  u \widetilde{\Lambda}_{s_0,cu}(\F^{(k)}) \sup_{f^{(k')}\in \F^{(k')},k'\ne k}\bigg(\sum_{i=1}^{N} \prod_{k'\ne k} (f^{(k')})^2_{i}\bigg)^{1/2} \\
    &\quad+uN^{1/2} \sum_{k=1}^{p} \bigg(\sum_{s\ge s_0} 2^{s/2}  \|\Delta_s f^{(k)}\|_{L_8} \bigg\|\prod_{k'<k}\pi_{s+1} f^{(k')}\prod_{k'>k}\pi_{s} f^{(k')}\bigg\|_{L_8}\bigg)\nonumber\\
    &\quad+ u2^{s_0/2} N^{1/2} \|\pi_{s_0}f^{(1)}\|_{L_8} \bigg\|\prod_{k'\ne 1}\pi_{s_0} f^{(k')}\bigg\|_{L_8}.
\end{align*}
Moreover, in the proof of Theorem \ref{thm:key_lemma}, we established in \eqref{eq:key_lemma_aux_special} that it holds with probability $1-c_1(p)\exp(-c_2(p) u^2 2^{s_0})$ that, for every $f^{(k)}\in \F^{(k)} \, (1\le k\le p)$,
\begin{align*}
&\bigg(\sum_{i=1}^{N}\prod_{k=1}^{p} \left(f^{(k)}(X_i^{(k)})\right)^2\bigg)^{1/2}\\
    &\lesssim_p N^{1/2} \bigg\|\prod_{k=1}^{p}\pi_{s_1} f^{(k)}\bigg\|_{L_4}+\sum_{s=s_0}^{s_1-2}  \bigg\|\prod_{k=1}^{p}\pi_{s+1} f^{(k)}\bigg\|_{L_4}  \bigg(\sum_{i=j_{s}}^{j_{s+1}-1}\left(\frac{e N}{i}\right)^{3/4}\bigg)^{1 / 2}\\
    &\quad + \sum_{k=1}^{p} u\bigg(\sum_{s\ge s_0}  2^{s/2} \|\Delta_s f^{(k)}\|_{(c_3 u^2 2^s)}+ 2^{s_0/2} \|\pi_{s_0}f^{(k)}\|_{(c_3 u^2 2^{s_0})}\bigg)\bigg(\prod_{k'\ne k}\sup_{f^{(k')}\in \F^{(k')}} \|f^{(k')}\|_{\ell^N_{\infty}}\bigg)\\
    &\lesssim_p N^{1/2}\bigg(\sup_{f^{(k)}\in \F^{(k)},1\le k\le p}\bigg\|\prod_{k=1}^{p}f^{(k)}\bigg\|_{L_4}\bigg)+u\sum_{k=1}^{p} \widetilde{\Lambda}_{s_0, u}(\F^{(k)}) \bigg(\prod_{k'\ne k}\sup_{f^{(k')}\in \F^{(k')}} \|f^{(k')}\|_{\ell^N_{\infty}}\bigg).
\end{align*}
Combining the two bounds and taking $s_0\asymp 1$ gives that, with probability at least $1-\exp(-u^2),$
\begin{align*}
     &\bigg|\sum_{i=1}^N \varepsilon_i \prod_{k=1}^{p} f^{(k)} (X^{(k)}_i)\bigg| \\
    &\lesssim_p \sum_{k=1}^{p}  u \widetilde{\Lambda}_{s_0,cu}(\F^{(k)})\left[ N^{1/2}\bigg(\sup_{f^{(k')}\in \F^{(k')}}\bigg\|\prod_{k'\ne k}f^{(k')}\bigg\|_{L_4}\bigg)+u\sum_{k'\ne k} \widetilde{\Lambda}_{s_0, u}(\F^{(k')}) \bigg(\prod_{k''\ne k,k'}\sup_{f^{(k'')}\in \F^{(k'')}} \|f^{(k'')}\|_{\ell^N_{\infty}}\bigg)\right]\\
     &\quad+uN^{1/2} \sum_{k=1}^{p} \bigg(\sum_{s\ge s_0} 2^{s/2}  \|\Delta_s f^{(k)}\|_{L_8} \bigg\|\prod_{k'<k}\pi_{s+1} f^{(k')}\prod_{k'>k}\pi_{s} f^{(k')}\bigg\|_{L_8}\bigg)\nonumber\\
    &\quad+ u2^{s_0/2} N^{1/2} \|\pi_{s_0}f^{(1)}\|_{L_8} \bigg\|\prod_{k'\ne 1}\pi_{s_0} f^{(k')}\bigg\|_{L_8}.
\end{align*}
Applying the inequalities
\begin{align*}
   \sup_{f^{(k')}\in \F^{(k')}}\bigg\|\prod_{k'\ne k}f^{(k')}\bigg\|_{L_4} &\le \prod_{k'\ne k} \sup_{f^{(k')}\in \F^{(k')}}\|f^{(k')}\|_{L_{4(p-1)}}\lesssim_p \prod_{k'\ne k} d_{\psi_1}(\F^{(k')}),
\\
\bigg\|\prod_{k'<k}\pi_{s+1} f^{(k')}\prod_{k'>k}\pi_{s} f^{(k')}\bigg\|_{L_8} &\le \prod_{k'<k}  \|\pi_{s+1} f^{(k')}\|_{L_{8(p-1)}}  \prod_{k'>k} \|\pi_{s}f^{(k')}\|_{L_{8(p-1)}} \lesssim_p \prod_{k'\ne k} d_{\psi_1}(\F^{(k')}), \\
\sum_{s\ge s_0} 2^{s/2}  \|\Delta_s f^{(k)}\|_{L_8} &\lesssim \sum_{s\ge s_0} 2^{s/2}  \|\Delta_s f^{(k)}\|_{\psi_1}\lesssim \gamma(\F^{(k)},\psi_1), \\
\|\pi_{s_0}f^{(1)}\|_{L_8} \bigg\|\prod_{k'\ne 1} \pi_{s_0}f^{(k')}\bigg\|_{L_8} &\le \|\pi_{s_0}f^{(1)}\|_{L_8} \prod_{k'\ne 1} \|\pi_{s_0}f^{(k')}\|_{L_{8(p-1)}}\lesssim_p \prod_{k=1}^{p} d_{\psi_1}(\F^{(k)}),
\end{align*}   
we obtain
\begin{align*}
     &\bigg|\sum_{i=1}^N \varepsilon_i \prod_{k=1}^{p} f^{(k)} (X^{(k)}_i)\bigg| \\
    &\lesssim_p \sum_{k=1}^{p}  u \widetilde{\Lambda}_{s_0,cu}(\F^{(k)})\left[ N^{1/2}\prod_{k'\ne k}d_{\psi_1}(\F^{(k')})+u\sum_{k'\ne k} \widetilde{\Lambda}_{s_0, u}(\F^{(k')}) \bigg(\prod_{k''\ne k,k'}\sup_{f^{(k'')}\in \F^{(k'')}} \|f^{(k'')}\|_{\ell^N_{\infty}}\bigg)\right]\\
     &\quad+uN^{1/2} \sum_{k=1}^{p} \gamma(\F^{(k)},\psi_1)\prod_{k'\ne k}d_{\psi_1}(\F^{(k')})+ u N^{1/2} \prod_{k=1}^{p} d_{\psi_1}(\F^{(k)}).
\end{align*}

We use Borell's characterization of log-concave distributions \cite[Lemma 2.3]{adamczak2010quantitative}: if $X$ is a centered random vector in $\R^d$ with a log-concave distribution, then for any $v\in \S^{d-1}$,
\[
\|\langle X,v \rangle\|_{\psi_1}\asymp \|\langle X,v \rangle\|_{L_2}.
\]
Therefore, for $\F^{(k)}=\{\langle \cdot,v \rangle: v\in \S^{d_k-1} \}$, we have
\begin{align*}\label{eq:aux1}
d_{\psi_1}(\F^{(k)})=\sup_{f^{(k)}\in\mathcal{F}^{(k)}}\|f^{(k)}\|_{\psi_1}\asymp \sup_{f^{(k)}\in\mathcal{F}^{(k)}}\|f^{(k)}\|_{L_2}=\sup_{\|v\|= 1}\big(\E\langle X^{(k)},v\rangle^2\big)^{1/2}=1.
\end{align*}

For each $1\le k\le p$, let $Y^{(k)}$ be a standard Gaussian random vector in $\R^{d_k}$. Then, for any $u, v \in \S^{d_k-1}$, the canonical metric associated with $Y^{(k)}$ satisfies
\[
d_{Y^{(k)}}(u, v)
= \big(\mathbb{E}(\langle Y^{(k)}, u\rangle-\langle Y^{(k)}, v\rangle)^2\big)^{1/2}=\|u-v\|_{\ell_2}
=\|\langle\cdot, u\rangle-\langle\cdot, v\rangle\|_{L_2(\mu^{(k)})},
\]
where $\mu^{(k)}$ denotes the law of $X^{(k)}$, and the last step follows from the fact that $X^{(k)}$ is isotropic. Then, we have
\[
\gamma(\mathcal{F}^{(k)}, \psi_1)\overset{\text{$(\star)$}}{\asymp}\gamma(\mathcal{F}^{(k)}, L_2)=\gamma\big(\S^{d_k-1}, d_{Y^{(k)}}\big) \overset{\text{$(\star\star)$}}{\asymp} \mathbb{E} \sup _{v \in \S^{d_k-1}}\langle Y^{(k)}, v\rangle = \E \|Y^{(k)}\| \asymp \big(\E \|Y^{(k)}\|^2\big)^{1/2}=d_k^{1/2},
\]
where $(\star)$ follows from the fact that for log-concave random variables, the $\psi_1$-norm of any linear functional is equivalent to its $L_2$-norm, and $(\star\star)$ follows from Talagrand's majorizing-measure theorem \cite[Theorem 2.10.1]{talagrand2022upper}.

For $1\le k\le p$, let $Z^{(k)}$ be a random vector in $\R^{d_k}$ whose coordinates are independent standard exponential random variables. For $s_0\asymp 1$ and $u\ge 1$, it follows from \cite[Section 4.3]{mendelson2016upper} that, for the isotropic, unconditional, log-concave random vector $X^{(k)}$, and the linear function class $\F^{(k)}=\{\langle \cdot,v \rangle: v\in \S^{d_k-1} \}$, we have
\[
\widetilde{\Lambda}_{s_0,u}(\F^{(k)})\lesssim u\,\E \|Z^{(k)}\|\le u\big(\E \|Z^{(k)}\|^2 \big)^{1/2}=ud_k^{1/2}.
\]

By \cite[(2.20)]{zhivotovskiy2024dimension}, for each $k$, we have with probability at least $1-N\exp(-t)$,
\[
\max_{1\le i\le N}\|X^{(k)}_i\|\lesssim d_k^{1/2}+t.
\]
Setting $t=\log N+u^2$, we have with probability at least $1-N\exp(-\log N-u^2)= 1-\exp(-u^2)$ that
\[
\sup_{f^{(k)}\in \F^{(k)}}\|f^{(k)}\|_{\ell_{\infty}^N}=\sup_{f^{(k)}\in \F^{(k)}}\max_{1\le i\le N}|f^{(k)}(X_i^{(k)})|=\max_{1\le i\le N}\sup_{v\in \S^{d_k-1}} |\langle v, X^{(k)}_i \rangle|=\max_{1\le i\le N}\|X^{(k)}_i\|\lesssim d_k^{1/2}+\log N+u^2.
\]
Combining everything together, for any $u\ge 1$, we have with probability  at least $1-\exp(-u^2)$ that
\begin{align*}
     &\sup_{f^{(k)}\in \F^{(k)},1\le k\le p}\bigg|\sum_{i=1}^N \varepsilon_i \prod_{k=1}^{p} f^{(k)} (X^{(k)}_i)\bigg| \\
      &\lesssim_p \sum_{k=1}^{p}  u \widetilde{\Lambda}_{s_0,cu}(\F^{(k)})\left[ N^{1/2}\prod_{k'\ne k}d_{\psi_1}(\F^{(k')})+u\sum_{k'\ne k} \widetilde{\Lambda}_{s_0, u}(\F^{(k')}) \bigg(\prod_{k''\ne k,k'}\sup_{f^{(k'')}\in \F^{(k'')}} \|f^{(k'')}\|_{\ell^N_{\infty}}\bigg)\right]\\
     &\quad+uN^{1/2} \sum_{k=1}^{p} \gamma(\F^{(k)},\psi_1)\prod_{k'\ne k}d_{\psi_1}(\F^{(k')})+ u N^{1/2} \prod_{k=1}^{p} d_{\psi_1}(\F^{(k)})\\
    &\lesssim_p \sum_{k=1}^{p}  u^2 d_k^{1/2}\bigg( N^{1/2}+u^2\sum_{k'\ne k} d_{k'}^{1/2} \prod_{k''\ne k,k'}\big(d_{k''}^{1/2}+\log N +u^2\big)\bigg)+uN^{1/2} \sum_{k=1}^{p}d_k^{1/2}\\
     &\lesssim_p  u^2 N^{1/2} \bigg(\sum_{k=1}^{p}d_k^{1/2}\bigg)+u^4\sum_{k\ne k'} d_{k}^{1/2} d_{k'}^{1/2}\prod_{k''\ne k,k'} \big(d_{k''}^{1/2}+\log N +u^2\big).
\end{align*}
Integrating the tail bound yields the following bound in expectation:
\begin{align*}
\E \sup_{f^{(k)}\in \F^{(k)},1\le k\le p}\bigg|\sum_{i=1}^N \varepsilon_i \prod_{k=1}^{p} f^{(k)} (X^{(k)}_i)\bigg|&\lesssim_p N^{1/2} \bigg(\sum_{k=1}^{p}d_k^{1/2}\bigg)+\sum_{k\ne k'} d_{k}^{1/2} d_{k'}^{1/2}\prod_{k''\ne k,k'} \big(d_{k''}^{1/2}+\log N \big)\\
&\lesssim_p N^{1/2} \bigg(\sum_{k=1}^{p}d_k^{1/2}\bigg)+\prod_{k=1}^{p} \big(d_{k}^{1/2}+\log N\big).
\end{align*}
Finally, by the symmetrization inequality for empirical processes \cite[Lemma 2.3.1]{van2023weak},
\begin{align*}
&\E \bigg\|\frac{1}{N}\sum_{i=1}^{N} X^{(1)}_i\otimes \cdots \otimes X^{(p)}_{i} -\E\, X^{(1)}\otimes \cdots\otimes X^{(p)} \bigg\| \\
&= \E \sup_{\|v_1\|=\cdots =\|v_p\|=1} \bigg|\frac{1}{N}\sum_{i=1}^N \prod_{k=1}^{p} \langle X^{(k)}_i, v_k\rangle- \E \prod_{k=1}^{p} \langle X^{(k)} , v_k \rangle \bigg|\\
&=\E \sup_{f^{(k)} \in \mathcal{F}^{(k)},1\le k\le p}\bigg|\frac{1}{N} \sum_{i=1}^N \prod_{k=1}^p f^{(k)} (X^{(k)}_i)-\mathbb{E} \prod_{k=1}^p f^{(k)} (X^{(k)})\bigg|\\
&\le 2\E \sup_{f^{(k)} \in \mathcal{F}^{(k)},1\le k\le p}\bigg|\frac{1}{N} \sum_{i=1}^N \varepsilon_i \prod_{k=1}^p f^{(k)} (X^{(k)}_i)\bigg|\\
&\lesssim_p \bigg(\frac{\sum_{k=1}^p d_k}{N}\bigg)^{1/2}+\frac{1}{N}\prod_{k=1}^{p}\big(d_k^{1/2}+\log N\big).
\end{align*}

% \section{Conclusions, discussion, and future directions}\label{sec:conclusions}

% Future directions:

% 1. Extend the approach in \cite{bartl2025uniform} to uniform mean estimation for multi-product processes with multiple random variables.

% 2. \cite{bartl2025we}

% 1. $p=2:$ cross-covariance, we can compare our result with some existing literature \cite{benaych2023optimal}. This will have some applications to EnKF, etc.

% 2. On the complex Gaussian random variables, quantum field theory 

% 3. On the cumulant tensors and k-statistics

% 4. Heavy tailed distribution

% 5. Applications: refer to \cite{al2025sharp} and \cite{ballard2025tensor}; motivation to asymmetric tensors

% \section*{Acknowledgments}

\section*{Funding}
This work was funded by NSF CAREER award NSF DMS-2237628. Part of this research was performed while the authors were visiting the Institute for Mathematical and Statistical Innovation (IMSI), which is supported by the National Science Foundation (Grant No. DMS-1929348).

\section*{Data availability}

No new data were generated or analyzed in support of this research.

\bibliographystyle{plain}
\bibliography{references}

\renewcommand{\theHsection}{A\arabic{section}}

\begin{appendix}

\section*{Appendix}
 
\section{Proofs of auxiliary results}\label{app:A}

\begin{lemma}\label{lemma:sup_Gaussian_products}
Let $Z^{(k)}_i, 1\le i\le N, 1\le k\le s$ be i.i.d. standard Gaussian random variables in $\R$. Then,
\[
\E  \max_{1\le j\le N} \prod_{k=1}^{s}|Z^{(k)}_j| \asymp_s (\log N)^{s/2}.
\]
\end{lemma}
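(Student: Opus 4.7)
The plan is to prove the two-sided bound $\mathbb{E}\max_{1\le j\le N}\prod_{k=1}^s |Z_j^{(k)}|\asymp_s (\log N)^{s/2}$ by bounding the two directions separately. Let $W_j := \prod_{k=1}^s|Z_j^{(k)}|$. By construction, the $W_j$ are i.i.d., and each $W_j$ is a product of $s$ independent standard half-Gaussians.

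For the upper bound, I would exploit Gaussian moment growth. Since $\|Z\|_{L^p}\lesssim \sqrt{p}$ for a standard Gaussian and the factors of $W_j$ are independent,
\[
\|W_j\|_{L^p}=\prod_{k=1}^s\||Z_j^{(k)}|\|_{L^p}\lesssim_s p^{s/2}
\]
for every $p\ge 1$. The standard maximal bound yields
\[
\mathbb{E}\max_{1\le j\le N}W_j\le \bigl(\mathbb{E}\max_j W_j^p\bigr)^{1/p}\le N^{1/p}\|W_1\|_{L^p}\lesssim_s N^{1/p}\,p^{s/2}.
\]
Choosing $p=\log N$ makes $N^{1/p}=e$ absolute, giving $\mathbb{E}\max_j W_j\lesssim_s (\log N)^{s/2}$.

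For the lower bound, the idea is to find a level $M\asymp_s (\log N)^{s/2}$ for which $\mathbb{P}(W_1\ge M)\gtrsim_s 1/N$, after which a routine independence argument completes the proof. I would set $\alpha:=s^{-1/2}\sqrt{\log N}$ and use the classical Mills-ratio lower bound $\mathbb{P}(|Z|\ge \alpha)\ge c\,\alpha/(1+\alpha^2)\,e^{-\alpha^2/2}\gtrsim (\log N)^{-1/2}\,N^{-1/(2s)}$. By independence of the factors inside $W_1$,
\[
p:=\mathbb{P}\bigl(|Z_1^{(k)}|\ge \alpha,\ \forall\,1\le k\le s\bigr)\gtrsim_s (\log N)^{-s/2}\,N^{-1/2},
\]
so $Np\to\infty$ with $N$. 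Setting $M:=\alpha^s\asymp_s(\log N)^{s/2}$, the event $\{|Z_1^{(k)}|\ge\alpha,\ \forall k\}$ forces $W_1\ge M$; therefore
\[
\mathbb{P}\bigl(\max_{1\le j\le N}W_j\ge M\bigr)=1-(1-p)^N\ge 1-e^{-Np}\ge 1/2
\]
for all $N$ sufficiently large (depending only on $s$), which gives $\mathbb{E}\max_j W_j\ge M/2\gtrsim_s(\log N)^{s/2}$. For small $N$ the bound $(\log N)^{s/2}$ is $O_s(1)$ and is absorbed into the constant since $\mathbb{E}\max_jW_j\ge \mathbb{E}W_1\gtrsim_s 1$.

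I do not expect a real obstacle here: the upper bound is a textbook $L^p$-optimization and the lower bound rests only on the standard Gaussian tail lower bound together with the fact that the $W_j$'s are i.i.d. The only care needed is to pick the threshold $\alpha$ so that the exponent $e^{-s\alpha^2/2}$ stays above $1/N$ while $\alpha^s$ still matches $(\log N)^{s/2}$ up to an $s$-dependent constant, which is exactly what $\alpha=s^{-1/2}\sqrt{\log N}$ achieves.
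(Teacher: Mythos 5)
Your proposal is correct, and the lower bound is essentially the same argument as the paper's: identify a threshold $M\asymp_s(\log N)^{s/2}$ for which $\P(W_1\ge M)\gtrsim 1/N$ by factoring the event through $\{|Z^{(k)}|\ge \alpha\text{ for all }k\}$ and using independence of the factors, then finish with $1-(1-p)^N$. The only cosmetic difference there is that you use the sharp Mills-ratio lower tail $\P(|Z|\ge\alpha)\gtrsim \alpha^{-1}e^{-\alpha^2/2}$ and can therefore take $\alpha$ at the natural scale $s^{-1/2}\sqrt{\log N}$, whereas the paper uses a cruder sub-Gaussian lower bound $\P(|Z|>t)\ge c_1 e^{-c_2 t^2}$ (with $c_2$ not equal to $1/2$) and compensates by taking the threshold $t=c_3(\log N)^{s/2}$ with $c_3$ sufficiently small; both routes deliver the same conclusion.

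Where you genuinely diverge from the paper is the upper bound. The paper observes $\max_j\prod_k|Z^{(k)}_j|\le\prod_k\max_j|Z^{(k)}_j|$ and then uses independence across $k$ to write $\E\prod_k\max_j|Z^{(k)}_j|=\prod_k\E\max_j|Z^{(k)}_j|\lesssim(\sqrt{\log N})^s$, invoking the standard bound $\E\max_{j\le N}|Z_j|\lesssim\sqrt{\log N}$. You instead use the $L^p$-maximal inequality $\E\max_j W_j\le N^{1/p}\|W_1\|_{L^p}$ with the optimized choice $p=\log N$, together with the moment growth $\|W_1\|_{L^p}=\||Z|\|_{L^p}^s\lesssim_s p^{s/2}$. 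Both are standard. The paper's factorization is a one-liner once you already know the Gaussian max bound, but it leans on the independence of the sequences $(Z^{(k)}_j)_j$ across $k$ in two places (once to factor the max, once to factor the expectation). Your $L^p$ route only needs that the row variables $W_1,\ldots,W_N$ are i.i.d.\ with controlled $L^p$ growth, so it is a bit more self-contained and would transfer to settings where the columns are not mutually independent. The two approaches give the same constant up to $s$-dependent factors.
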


\begin{proof}

For the upper bound, we have
\[
\E  \max_{1\le j\le N} \prod_{k=1}^{s}|Z^{(k)}_j|\le \E \prod_{k=1}^{s}\max_{1\le j\le N} |Z^{(k)}_j|=\prod_{k=1}^{s}\E \max_{1\le j\le N}|Z^{(k)}_j|\overset{\text{$(\star)$}}{\le} \left(C\sqrt{\log N}\right)^{s}\lesssim_s (\log N)^{s/2},
\]
where the inequality $(\star)$ follows by \cite[Lemma 2.3.4]{talagrand2022upper}. Now we prove the lower bound. Let $Z, Z^{(1)},\ldots,Z^{(s)}$ be i.i.d. standard Gaussian random variables in $\R$. By \cite[Proposition 2.1.2]{vershynin2018high}, there exists positive constants $c_1\in (0,1),c_2\in (1,\infty)$ such that, for any $t\ge 2$,
\begin{align*}
\mathbb{P}\left[|Z^{(1)}\cdots Z^{(s)}|\ge t\right] &\ge  \mathbb{P}\left[ |Z^{(1)}|\ge t^{1/s},\ldots, |Z^{(s)}|\ge t^{1/s} \right]= \left(\mathbb{P}\left[|Z|>t^{1/s}\right]\right)^s\\
&\ge \left(c_1 \exp(-c_2 t^{2/s})\right)^s =c_1^s \exp\left(-sc_2 t^{2/s}\right). 
\end{align*}
It follows that
\begin{align*}
\mathbb{P}\left[ \max_{1\le j\le N} \prod_{k=1}^{s}|Z^{(k)}_j| \ge t \right] = 1- \left(1-\mathbb{P}\left[ |Z^{(1)}\cdots Z^{(s)}| \ge t\right]\right)^N\ge 1-\left(1-c_1^s \exp\left(-sc_2 t^{2/s} \right)\right)^N.
\end{align*}
Taking $t=c_3(\log N)^{s/2}$ for some sufficiently small constant $c_3$ depending only on $s$, we obtain
\[
\mathbb{P}\left[ \max_{1\le j\le N} \prod_{k=1}^{s}|Z^{(k)}_j| \ge c_3(\log N)^{s/2} \right]\ge 1-\left(1-c_1^s \exp\left(-sc_2 c_3^{2/s} \log N \right)\right)^N\ge \frac{1}{2},
\]
from where it follows that 
$
\E \max_{1\le j\le N} \prod_{k=1}^{s}|Z^{(k)}_j|\ge \frac{c_3}{2}(\log N)^{s/2}. \qedhere
$
\end{proof}

\begin{lemma}\label{lemma:tail_to_moment}
Let $X\ge 0$ be a random variable. Fix $u_0\ge 1$ and let $F:[u_0,\infty)\to (0,\infty)$ be increasing. Assume there exist constants
$C_0\ge 1$, $c_0>0$, and $m\ge 1$ such that, for all $u\ge u_0$,
\begin{equation}\label{eq:tail_assump}
\mathbb{P}\big[X>F(u)\big] \le C_0 e^{-c_0 u^2},
\end{equation}
and, for all $u\ge v\ge u_0$,
\begin{equation}\label{eq:growth_assump}
\frac{F(u)}{F(v)} \le \Big(\frac{u}{v}\Big)^m.
\end{equation}
Then, for every $q\ge 1$,
\begin{equation}\label{eq:moment_conclusion}
\big(\E \, X^q\big)^{1/q}\ \le\ C\,F\!\big(C'\sqrt{q}+u_0\big),
\end{equation}
where $C,C'>0$ depend only on $C_0,c_0,m$, and are independent of $q$ and of any parameters implicit in $F$.
\end{lemma}

\begin{proof}
We use the tail integral identity
\begin{equation*}
\E\, X^q = q\int_{0}^{\infty} t^{q-1}\,\mathbb P [X>t]\,dt.
\end{equation*}
Set $t_0:=F(u_0)$. Splitting at $t_0$ gives
\[
\mathbb E\, X^q
\le q\int_0^{t_0} t^{q-1}\,dt \;+\; q\int_{t_0}^{\infty} t^{q-1}\mathbb P[X>t]\,dt
= t_0^q + I,
\]
where $I:=q\int_{t_0}^{\infty} t^{q-1}\mathbb P[X>t]\,dt$.

To bound $I$, use the monotone change of variables $t=F(u)$ and the tail assumption \eqref{eq:tail_assump}:
\[
I
\le \int_{u_0}^{\infty}\mathbb P\big[X>F(u)\big]\,d\!\big(F(u)^q\big)
\le C_0\int_{u_0}^{\infty}e^{-c_0u^2}\,d\!\big(F(u)^q\big).
\]
Integrating by parts yields
\[
\int_{u_0}^{\infty}e^{-c_0u^2}\,d\!\big(F(u)^q\big)
= -e^{-c_0 u_0^2} F(u_0)^q+ 2c_0\int_{u_0}^{\infty}u\,e^{-c_0u^2}\,F(u)^q\,du\le 2c_0\int_{u_0}^{\infty}u\,e^{-c_0u^2}\,F(u)^q\,du.
\]
Therefore,
\begin{equation}\label{eq:moment_reduce}
\E\, X^q \le F(u_0)^q + 2c_0C_0\int_{u_0}^{\infty}u\,e^{-c_0u^2}\,F(u)^q\,du.
\end{equation}

Now choose a constant $\alpha\ge 2u_0$ (to be fixed depending only on $c_0$ and $m$) and set
\[
u_\ast:=\alpha\sqrt q.
\]
Split the integral in \eqref{eq:moment_reduce} at $u_\ast$.
For the lower part, monotonicity of $F$ gives
\[
\int_{u_0}^{u_\ast}u\,e^{-c_0u^2}F(u)^q\,du
\le F(u_\ast)^q\int_{0}^{\infty}u\,e^{-c_0u^2}\,du
\lesssim_{c_0} F(u_\ast)^q.
\]
For the upper part, use the growth condition \eqref{eq:growth_assump}:
for $u\ge u_\ast\ge u_0$,
\[
F(u)\le F(u_\ast)\Big(\frac{u}{u_\ast}\Big)^m,
\]
hence
\[
\int_{u_\ast}^{\infty}u\,e^{-c_0u^2}F(u)^q\,du
\le F(u_\ast)^q \int_{u_\ast}^{\infty}u\,e^{-c_0u^2}\Big(\frac{u}{u_\ast}\Big)^{mq}\,du.
\]
Change variables $u=u_\ast s$ to get
\[
\int_{u_\ast}^{\infty}u\,e^{-c_0u^2}\Big(\frac{u}{u_\ast}\Big)^{mq}\,du
= u_\ast^2\int_{1}^{\infty}s^{mq+1}e^{-c_0u_\ast^2 s^2}\,ds
\le u_\ast^2\int_{1}^{\infty}s^{mq+1}e^{-c_0\alpha^2 q\, s^2}\,ds.
\]
Since $s^2\ge s$ for $s\ge 1$, we have $e^{-c_0\alpha^2 q s^2}\le e^{-c_0\alpha^2 q s}$ and thus
\[
u_\ast^2\int_{1}^{\infty}s^{mq+1}e^{-c_0\alpha^2 q\, s^2}\,ds
\le u_\ast^2\int_{1}^{\infty}s^{mq+1}e^{-c_0\alpha^2 q\, s}\,ds.
\]
With the substitution $t=c_0\alpha^2 q\,s$,
\[
\int_{1}^{\infty}s^{mq+1}e^{-c_0\alpha^2 q s}\,ds
= (c_0\alpha^2 q)^{-(mq+2)}\int_{c_0\alpha^2 q}^{\infty} t^{mq+1}e^{-t}\,dt
\le (c_0\alpha^2 q)^{-(mq+2)}\Gamma(mq+2).
\]
Using the bound $\Gamma(r+1)\le r^r$ for $r\ge 1$ (e.g.\ from Stirling) gives
\[
(c_0\alpha^2 q)^{-(mq+2)}\Gamma(mq+2)
\lesssim (c_0\alpha^2 q)^{-(mq+2)}(mq+1)^{mq+1}
\lesssim \Big(\frac{m}{c_0\alpha^2}\Big)^{mq}\cdot \frac{1}{q},
\]
up to absolute constants. Choosing $\alpha$ large enough so that
$m/(c_0\alpha^2)\le 1/2$ ensures the right-hand side is $\lesssim 1/q$, hence
\[
\int_{u_\ast}^{\infty}u\,e^{-c_0u^2}\Big(\frac{u}{u_\ast}\Big)^{mq}\,du
\lesssim u_\ast^2\cdot \frac{1}{q}
\asymp \alpha^2.
\]
Therefore,
\[
\int_{u_\ast}^{\infty}u\,e^{-c_0u^2}F(u)^q\,du \lesssim_{\alpha,c_0,m} F(u_\ast)^q.
\]
Plugging the two parts back into \eqref{eq:moment_reduce}, and using that $F(u_0)\le F(u_\ast)$ and that $\alpha$ is chosen depending only on $c_0$ and $m,$ yields
\[
\mathbb E\, X^q \ \lesssim_{C_0,c_0,m}\ F(u_\ast)^q.
\]
Taking $q$-th roots gives \eqref{eq:moment_conclusion} (absorbing constants into $C,C'$).
\end{proof}

\begin{example}[Growth condition for a polynomial with nonnegative coefficients]\label{ex:growth_positive_poly}
Let
\[
F(u)=\sum_{j=0}^{d} c_j u^j,\qquad c_j\ge 0.
\]
Then, for any $u\ge v>0$,
\[
F(u)\le \Big(\frac{u}{v}\Big)^d F(v),
\qquad\text{i.e.}\qquad
\frac{F(u)}{F(v)}\le \Big(\frac{u}{v}\Big)^d.
\]
Indeed, for each $j\le d$ we have
\[
u^j=v^j\Big(\frac{u}{v}\Big)^j\le v^j\Big(\frac{u}{v}\Big)^d,
\]
and multiplying by $c_j\ge 0$ and summing over $j$ gives
\[
F(u)=\sum_{j=0}^d c_j u^j
\le \Big(\frac{u}{v}\Big)^d \sum_{j=0}^d c_j v^j
=\Big(\frac{u}{v}\Big)^d F(v).
\]
\end{example}

\section{Optimality of expectation bound \eqref{eq:aux_key_lemma}}\label{app:B}

We demonstrate the optimality of the expectation bound \eqref{eq:aux_key_lemma} in Remark \ref{remark:optimality} via a simple example. 

For $1\le k\le p$, let $H^{(k)}=\R^{d_k}$ and consider a symmetric compact set $S^{(k)}\subset \R^{d_k}$. Define the function class $\F^{(k)}=\{\langle \cdot,v_k \rangle: v_k\in S^{(k)} \}$, which consists of linear functions indexed by $S^{(k)}$. Let $X^{(k)}_1,\ldots,X^{(k)}_N$ be i.i.d. standard Gaussian random vectors in $\R^{d_k}$ for each $1\le k\le p$, and assume that the sequences $(X^{(1)}_i)_{i=1}^{N},\ldots,(X_i^{(p)})_{i=1}^{N}$ are independent. Let $v_k^{*}\in S^{(k)}$ be a point attaining the maximal Euclidean norm on $\R^{d_k}$, i.e., $ \|v_k^{*}\|_{\ell_2}=\sup_{v_k\in S^{(k)}}\|v_k\|_{\ell_2}$. Observe that
\begin{align}\label{eq:remark_aux1}
    &\E \sup_{f^{(k)}\in \F^{(k)},1\le k\le p} \bigg(\sum_{i=1}^{N}\prod_{k=1}^{p} \left(f^{(k)}(X_i^{(k)})\right)^2\bigg)^{1/2} =\E \sup_{v_k\in S^{(k)},1\le k\le p} \bigg(\sum_{i=1}^{N}\prod_{k=1}^{p}\langle X_i^{(k)},v_k \rangle^2 \bigg)^{1/2}\nonumber\\
    &\gtrsim \E \bigg(\sum_{i=1}^{N}\prod_{k=1}^{p}\langle X_i^{(k)},v^{*}_{k} \rangle^2 \bigg)^{1/2}\gtrsim_p \sqrt{N}\prod_{k=1}^{p}\|v^{*}_{k}\|_{\ell_2}\asymp_p \sqrt{N}\bigg(\prod_{k=1}^{p} d_{\psi_2}(\F^{(k)})\bigg), 
\end{align}
where the last step follows by $\|v_k^{*}\|_{\ell_2}=\sup_{v_k\in S^{(k)}}\|v_k\|_{\ell_2}\asymp \sup_{f^{(k)}\in \F^{(k)}}\|f^{(k)}\|_{\psi_2}= d_{\psi_2}(\F^{(k)})$ for every $1\le k\le p$. This confirms the necessity of the term $\sqrt{N}\big(\prod_{k=1}^{p} d_{\psi_2}(\F^{(k)})\big)$ in \eqref{eq:aux_key_lemma}.

On the other hand, we may assume without loss of generality that
\[
\bar{\gamma}(\F^{(1)},\psi_2)\ge \cdots \ge \bar{\gamma}(\F^{(t)},\psi_2)\ge (\log N)^{1/2} \ge \bar{\gamma}(\F^{(t+1)},\psi_2)\ge \cdots \ge \bar{\gamma}(\F^{(p)},\psi_2).
\]
Then,
\begin{align*}
    &\E \sup_{v_k\in S^{(k)},1\le k\le p} \bigg(\sum_{i=1}^{N}\prod_{k=1}^{p}\langle X_i^{(k)},v_k \rangle^2 \bigg)^{1/2}
    \ge \E \sup_{v_k\in S^{(k)},1\le k\le p} \max_{1\le i\le N} \prod_{k=1}^{p} \big|\langle X_i^{(k)},v_k \rangle\big|\\
    &=\E \max_{1\le i\le N} \sup_{v_k\in S^{(k)},1\le k\le p}  \prod_{k=1}^{p}\big|\langle X_i^{(k)},v_k \rangle\big|   
    \overset{\text{($\star$)}}{\ge} \E   \max_{1\le i\le N}\sup_{v_k\in S^{(k)},1\le k\le t} \prod_{k=1}^{t} \big|\langle X_i^{(k)},v_k \rangle\big| \prod_{k=t+1}^{p} \big| \langle X_i^{(k)},v^*_k\rangle\big|\\
&=\bigg(\prod_{k=t+1}^{p}\|v_k^*\|_{\ell_2}\bigg)\E\left[\max_{1\le i\le N} \sup_{v_k\in S^{(k)},1\le k\le t}  \prod_{k=1}^{t} \big|\langle X_i^{(k)},v_k \rangle\big| \prod_{k=t+1}^{p} \bigg| \frac{\langle X_i^{(k)},v^*_k\rangle}{\|v^*_k\|_{\ell_2}}\bigg|\right]\\
&\ge \bigg(\prod_{k=t+1}^{p}\|v_k^*\|_{\ell_2}\bigg)\E_{(X^{(k)}_i)_{i=1}^{N},k\ge t+1}  \left[\max_{1\le i\le N}\bigg(\E_{(X^{(k)}_i)_{i=1}^{N},k\le t} \sup_{v_k\in S^{(k)},1\le k\le t} \prod_{k=1}^{t}\big|\langle X_i^{(k)},v_k \rangle\big|\bigg) \prod_{k=t+1}^{p} \bigg| \frac{\langle X_i^{(k)},v^*_k\rangle}{\|v^*_k\|_{\ell_2}}\bigg|
\right]\\
&=\bigg(\prod_{k=t+1}^{p}\|v_k^*\|_{\ell_2}\bigg)\E_{(X^{(k)}_i)_{i=1}^{N},k\ge t+1}  \left[\max_{1\le i\le N}\bigg(\prod_{k=1}^{t}\E \sup_{v_k\in S^{(k)}} \big|\langle X_i^{(k)},v_k \rangle\big|\bigg) \prod_{k=t+1}^{p} \bigg| \frac{\langle X_i^{(k)},v^*_k\rangle}{\|v^*_k\|_{\ell_2}}\bigg|
\right]\\
& \overset{\text{($\star\star$)}}{\asymp}_p \bigg(\prod_{k=t+1}^{p}d_{\psi_2}(\F^{(k)})\bigg) \bigg(\prod_{k=1}^{t}\gamma(\F^{(k)},\psi_2) \bigg)\E_{(X^{(k)}_i)_{i=1}^{N},k\ge t+1}  \left[\max_{1\le i\le N} \prod_{k=t+1}^{p} \bigg| \frac{\langle X_i^{(k)},v^*_k\rangle}{\|v^*_k\|_{\ell_2}}\bigg|\right]\\
&\asymp_p \bigg(\prod_{k=t+1}^{p}d_{\psi_2}(\F^{(k)})\bigg) \bigg(\prod_{k=1}^{t}\gamma(\F^{(k)},\psi_2) \bigg)(\log N)^{(p-t)/2},
\end{align*}
where ($\star$) follows by taking $v_{k}=v^*_{k}$ for $k\ge t+1$, and ($\star\star$) follows using that $\|v_k^{*}\|_{\ell_2}\asymp d_{\psi_2}(\F^{(k)})$ and that, by  Talagrand's majorizing-measure theorem \cite[Theorem 2.10.1]{talagrand2022upper}, $\E\sup_{v_k \in S^{(k)}}\langle X^{(k)},v_k \rangle\asymp \gamma(\F^{(k)},\psi_2)$. The last step follows since, for every $k\ge t+1$ and $1\le i\le N$, the random variables
$\frac{\langle X_i^{(k)},v^*_k\rangle}{\|v^*_k\|_{\ell_2}}$ are i.i.d. standard normal random variables, and these Gaussians are independent across $k$ and $i$. By applying Lemma \ref{lemma:sup_Gaussian_products}, we obtain
\[
\E_{(X^{(k)}_i)_{i=1}^{N},k\ge t+1}  \left[\max_{1\le i\le N} \prod_{k=t+1}^{p} \bigg| \frac{\langle X_i^{(k)},v^*_k\rangle}{\|v^*_k\|_{\ell_2}}\bigg|\right]\asymp_p (\log N)^{(p-t)/2}.
\]
Therefore, we have shown that
\begin{align}\label{eq:remark_aux2}
\E \sup_{v_k\in S^{(k)},1\le k\le p} \bigg(\sum_{i=1}^{N}\prod_{k=1}^{p}\langle X_i^{(k)},v_k \rangle^2 \bigg)^{1/2} &\gtrsim_p \bigg(\prod_{k=t+1}^{p}d_{\psi_2}(\F^{(k)})\bigg) \bigg(\prod_{k=1}^{t}\gamma(\F^{(k)},\psi_2) \bigg)(\log N)^{(p-t)/2}\nonumber\\
&\asymp_p \bigg(\prod_{k=1}^{p} d_{\psi_2}(\F^{(k)})\bigg)  \bigg(\prod_{k=1}^{p} \Big(\bar{\gamma}(\F^{(k)},\psi_2)+(\log N)^{1/2}\Big)\bigg).
\end{align}
Combining the two lower bounds \eqref{eq:remark_aux1} and \eqref{eq:remark_aux2} shows that, in this example, the upper bound \eqref{eq:aux_key_lemma} is sharp up to a constant depending only on $p$.

\end{appendix}

\end{document}